\newtheorem{theorem}{Theorem}[subsection]
\newtheorem{lemma}[theorem]{Lemma}
\newtheorem{corollary}[theorem]{Corollary}
\theoremstyle{definition}
\newtheorem{definition}[theorem]{Definition}
\newtheorem{example}[theorem]{Example}
\theoremstyle{remark}
\newtheorem{remark}[theorem]{Remark}
\numberwithin{equation}{section}
\DeclareMathOperator{\Sing}{Sing}
\newcommand*{\nn}{\mathbb N}
\newcommand*{\rr}{\mathbb R}
\newcommand*{\zz}{\mathbb Z}
\newcommand*{\cc}{\mathbb C}
\newcommand\prenonwandering{pre{-}nonwandering}
\newcommand\preperiodic{pre{-}periodic}
\newcommand\still{still }
\newcommand\forever{forever }
\newcommand\classicwanderingset{W}
\newcommand\classicnonwanderingset{\Omega}
\newcommand\stillwanderingset{\overline{W}}
\newcommand\nonwanderingset{\overline{\Omega}}
\newcommand\foreverwanderingset{\widetilde{W}}
\newcommand\prenonwanderingset{\widetilde{\Omega}}
\begin{document}

\title[Pseudo-B\"{o}ttcher components of the wandering set]%
{Pseudo-B\"{o}ttcher components of the wandering set of inner mappings}
\author{Igor \,Yu. Vlasenko}
\address{Algebra and Topology Department, Institute of Mathematics, Kyiv, Ukraine}
\email{vlasenko@imath.kiev.ua}

\begin{abstract}
  This article explores the topology of Pseudo-Böttcher totally
  invariant connected components of the wandering set in dynamical
  systems generated by non\discretionary{-}{}{-}invertible inner (open
  surjective isolated) mappings of compact surfaces. We describe the
  possible topological types of these invariant connected subsets,
  which 
  are more diverse then corresponding components of homeomorphisms.
\end{abstract}

\maketitle


\section{Introduction}
This article contributes to the understanding of the wandering set in
dynamical systems generated by non\discretionary{-}{}{-}invertible
inner (open surjective isolated) mappings of compact surfaces.
It focuses on invariant connected subsets (components) of the wandering set
which can be distinguished using filtration of the wandering set
by attractor\discretionary{-}{}{-}repeller pairs.
Such components are well understood in the invertible case
but has not been studied in the non\discretionary{-}{}{-}invertible case.

In the invertible case their topological type can only be a disc or an annulus.
In the non\discretionary{-}{}{-}invertible case we introduce
a class of annulus\discretionary{-}{}{-}like totally invariant connected components
of the wandering set of dynamical systems,
generated by inner mappings of compact surfaces,
called Pseudo-B\"{o}ttcher components (Definition~\ref{def:pseudobottcher_basin1}).
We describe all possible topological types of
Pseudo-B\"{o}ttcher components, including new topological types
specific to non\discretionary{-}{}{-}invertible case, in Theorem~\ref{th:component_topology}.

Dynamical systems generated by
non\discretionary{-}{}{-}invertible maps are not mainstream,
so there is a need to explain maps, invariant sets, classes and
topological invariants used
and their difference from mainstream invertible dynamical systems in detail.

Inner mappings as a separate subclass of non\discretionary{-}{}{-}invertible continuous
maps of two-dimensional manifolds were 
introduced by S.~Stoilov~\cite{MR0082545_Stoilov56}.
Stoilov defined inner mappings as open zero\discretionary{-}{}{-}dimensional
continuous
maps, proved that inner mappings are in fact
open discrete maps, and proved his well-known theorem
that the class of inner mappings coincides with the class of perturbations
of holomorphic mappings by the right action of the group of
homeomorphisms.
In the
case of higher\discretionary{-}{}{-}dimensional manifolds,
the class of open zero-dimensional maps no longer coincides with the class of open discrete maps.
That is why Yu.~Yu.~Trokhymchuk~\cite{Trokhimchuk2008__en} defined the class of inner mappings
in general case as a class of open discrete maps (Definition~\ref{def:inner_mapping}).
Homeomorphisms, diffeomorphisms, holomorphic maps 
belong to the class of inner mappings.

Classical topological theory of dynamical systems generated by
invertible maps 
considers them up to conjugacy with a homeomorphism
and defines a lot of well-known invariant sets such as wandering and
nonwandering ones.
It is usually hard to apply 
those classic invariant sets to non\discretionary{-}{}{-}invertible maps.
At most,
``$\omega$'' wariants of invariant sets are used, such as
$\omega$-\hspace{0em}nonwandering set.
However, among all continuous maps, inner mappings are
the natural class to which the methods of topological theory of invertible dynamical systems
can be extended.
Being open, inner mappings preserve base topology at a point,
and being discrete, they allow to trace locally a partial
trajectory of a point back-and-forth unambiguously.

Analogues of classic invariant sets for inner mappings
are built in author's book~\cite{VlaBook2014__en}.
We can speak about trajectories (Definition~\ref{def:trajectory}),
wandering sets (Definition~\ref{def:wandering}) and so on,
because the corresponding definitions reduce to classic ones when
applied to invertible maps.

The wandering set of non\discretionary{-}{}{-}invertible
inner mappings can be just as complex as the nonwandering set
of invertible mappings.
In this paper, we focus on a specific subsets of the wandering set:
totally invariant subsets (Definition~\ref{def:totally_invariant}).
A new class called
Pseudo\discretionary{-}{}{-}B\"{o}ttcher regular components is defined
(Definition~\ref{def:pseudobottcher_basin1})
which extends the class
of B\"{o}ttcher components from~\cite{Vla:BRD4:UMG}.
This paper answers the questions about
topology of Pseudo\discretionary{-}{}{-}B\"{o}ttcher regular components.
Similar questions were left unexplored in the paper~\cite{Vla:BRD4:UMG}.
Theorem~\ref{th:component_topology} shows that without singular points
Pseudo\discretionary{-}{}{-}B\"{o}ttcher regular components
are just open annuli that is the same as in the invertible case,
but with singular points those components
become quite complex topologically, as
described in Theorem~\ref{th:component_topology}.

\section{Preliminary information}

Let $f\colon M\to M$ be a continuous surjective map of a compact manifold $M$
without boundary.


Due to
the difference between invertible and non\discretionary{-}{}{-}invertible maps
the notion of invariant set has to be split into two
notions.

\begin{definition}\label{def:invariant_set}\label{def:totally_invariant}
 A set $X\subset M$ is called
  \begin{itemize}
  \item
  \emph{invariant} set 
  if $f(X)=X$.
  \item
  \emph{totally invariant} set 
  if $f^{-1}(X)=X$.
  \end{itemize}
\end{definition}

A totally invariant set is
invariant because $f(X)=f(f^{-1}(X))=X$.
The converse is not true for non\discretionary{-}{}{-}invertible maps.
A set $X$ can be invariant ($f(X)=X$) but not totally invariant
($f^{-1}(X)\not=X$).

For the invertible maps any invariant set is totally invariant.

\subsection{Attractor\discretionary{-}{}{-}repeller pairs.}
\label{sec:attractor_repeller_pair}

Filtration of the manifold $M$ 
by attractor\hspace{0em}{-}repeller pairs
is part of the theory of chain\discretionary{-}{}{-}recurrent sets
and the ``fundamental theorem of dynamical systems''
which has been developed by Conley~\cite{Conley78} for
continuous flows on a compact metric space
and has been generalized by Franks for homeomorphisms~\cite{Franks88}
and Hurley for semiflows and endomorphisms~\cite{Hurley95}.

\begin{definition}\label{def:strictly_attracting_set}
  An open set $U\subset M$ is called \emph{strictly attracting} if $f(\overline{U})\subset U$.
\end{definition}
A strictly attracting open set $U$ generates
\begin{itemize}
  \item
the \emph{attractor}
$A:=\cap_{n\ge 0} f^n(\overline{U})$,
  \item
the \emph{basin of attraction}
$B_A:=\cup_{n\ge 0} f^{-n}(U)$ and
  \item
the \emph{repeller} $R:=M\setminus B_A$.
\end{itemize}

\begin{figure}[htb]
    \includegraphics[scale=0.4]{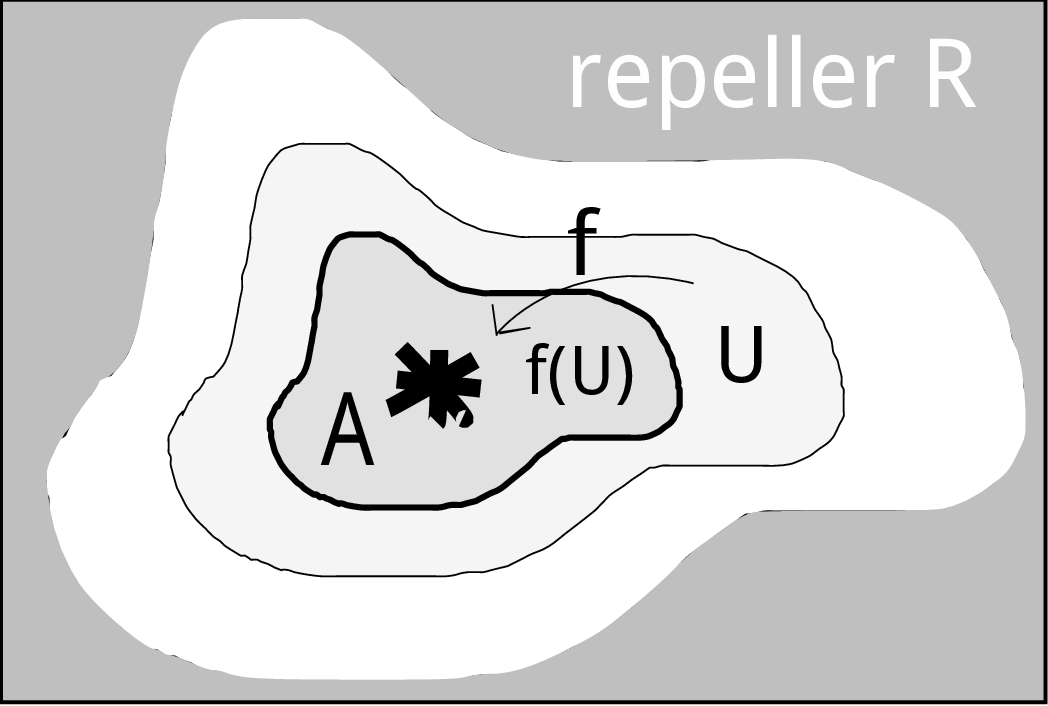}
\caption{
Attractor\discretionary{-}{}{-}repeller pair $(A, R)$ and a strictly attracting
neighborhood $U$.
}
\end{figure}

$A\not=\emptyset$ and $R\not=\emptyset$ because $M$ is compact.
By construction, $f(A)=A$, $f(B_A)=B_A$, $f^{-1}(B_A)=B_A$.
Therefore, $f(R)=R$, $f^{-1}(R)=R$.
If $f$ is not invertible, $f^{-1}(A)$ can be different from $A$.

Repeller $R$ and basin of attraction $B_A$ are 
totally invariant sets. Attractor $A$
is 
invariant, but possibly not totally invariant set.

For homeomorphisms, an attractor\discretionary{-}{}{-}repeller pair $(A, R)$
is symmetric: an attractor $A$ for the map $f$
becomes repeller for the map $f^{-1}$.
For non\discretionary{-}{}{-}invertible maps
an attractor\discretionary{-}{}{-}repeller pair $(A, R)$ is not symmetric:
repeller $R$ is totally invariant, while attractor $A$ is not.

\begin{definition}\label{def:region_of_strict_wandering}
  A set $D:=B_A \setminus \cup_{n\ge 0} f^{-n}(A)$ is called
  the \emph{region of strict wandering} of the attractor $A$.
\end{definition}

\begin{definition}\label{def:component_of_strict_wandering}
  A connected component $S$ of the region of strict wandering 
  is called
  a \emph{component of strict wandering} of the attractor $A$.
\end{definition}
By construction, the region of strict wandering and its
components of strict wandering are open sets.


\subsection{Definition of inner mappings.}

\begin{definition}\label{def:inner_mapping}
  A map $f$ is called
  \begin{enumerate}
  \item
    \emph{open} if the image of any open set is open;
  \item
    \emph{closed} if the image of any closed set is closed;
  \item
    \emph{discrete} if the preimage of any point is
  a discrete set (a set is called \emph{discrete} if any point of the set is isolated);
  \item
    \emph{finite-to-one} if the preimage of any point is finite;
  \item
    \emph{inner mapping} if $f$ is
  continuous open discrete surjective map of $M$.
\end{enumerate}
\end{definition}

\begin{theorem}[S.~Stoilov~\cite{MR0082545_Stoilov56}]\label{th:stoilov_theorem}
  Let $M$ be a two-dimensional manifold and $f\colon M\to \cc$ be an
  inner mapping. Then there exists a Riemannian surface $R$ which is a complex
  structure on $M$, a homeomorphism $h\colon M\to R$,
  and a holomorphic function $F\colon R\to C$
  such that
  $f=F\circ h$.
\end{theorem}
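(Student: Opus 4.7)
The plan is to prove the factorization by first establishing a local normal form for inner mappings at every point, and then using this normal form to equip $M$ with a complex atlas with respect to which $f$ becomes holomorphic.

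First, I would prove the local structure theorem: for each $p\in M$ there exist open disc neighbourhoods $U\ni p$ and $V\ni f(p)$ together with homeomorphisms $\varphi\colon U\to \mathbb{D}$ and $\psi\colon V\to \mathbb{D}$, sending $p$ and $f(p)$ to $0$, such that $\psi\circ f\circ \varphi^{-1}(z)=z^{n}$ for some integer $n=n(p)\ge 1$, the local multiplicity at $p$. To obtain this, I would choose $V$ so small that $p$ is the only preimage of $f(p)$ in the connected component $U$ of $f^{-1}(V)$ containing $p$. Openness of $f$ then gives $f(U)=V$, while discreteness together with a local compactness argument ensures that $f$ restricts to a finite-sheeted covering $U\setminus\{p\}\to V\setminus\{f(p)\}$. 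The classification of connected covers of a punctured disc identifies this covering with the cyclic $n$-sheeted cover $z\mapsto z^n$, and uniformising the source produces the desired $\varphi$ and $\psi$.

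Second, I would use the local model to build a complex atlas on $M$. Let $B\subset M$ denote the branch locus (where $n(p)>1$); by the local normal form, $B$ is discrete and closed. Around any $p\notin B$, $f$ itself is a local homeomorphism onto an open subset of $\cc$, and thus serves as a complex chart; around $p\in B$, the homeomorphism $\varphi$ of the local model serves as the chart. Two non\discretionary{-}{}{-}branch charts overlap via a local biholomorphism, since their transition is the composition of a local section of $f$ with $f$ itself, which is holomorphic away from $B$; a branch chart and a non\discretionary{-}{}{-}branch chart overlap via (up to translation) $z\mapsto z^{n}$ or a local branch of $z\mapsto z^{1/n}$, both holomorphic on their domains. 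These charts assemble into a complex structure on $M$, producing the Riemann surface $R$. Taking $h\colon M\to R$ to be the identity (a homeomorphism) and $F:=f$ viewed on $R$, the identity $f=F\circ h$ is tautological, and $F$ is holomorphic by the very construction of the charts.

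The main obstacle is the local normal form. The delicate step is showing that $f|_{U\setminus\{p\}}\to V\setminus\{f(p)\}$ is a genuine finite-sheeted covering: this requires combining openness of $f$ (to ensure surjectivity onto the punctured target), discreteness (so each fibre in $U$ is finite once $U$ is chosen small enough), and a compactness argument preventing preimages from accumulating at $\partial U$ as $V$ shrinks. Once this local model is secured, everything else reduces to the classification of coverings of the punctured disc and a routine check of holomorphicity of transition maps.
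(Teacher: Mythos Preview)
The paper does not give its own proof of this theorem: it is stated with attribution to Stoilov and used as a black box, so there is nothing to compare your argument against on the paper's side. Your outline is essentially the classical route to Stoilov's theorem---local normal form via covering-space theory over a punctured disc, then patching charts---and it is correct in its broad strokes; note only that in your first step you implicitly use that $p$ is the sole non--local-homeomorphism point of $f$ inside $U$, i.e.\ that the branch locus cannot accumulate in $U$, and this discreteness of the branch set is itself part of what must be proved (it follows from properness of $f\colon U\to V$ once $U$ is chosen with compact closure in $f^{-1}(V)$, combined with openness and invariance of domain, but it deserves an explicit sentence).
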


\begin{definition}
  A map $f\colon M\to M$ is called \emph{local homeomorphism} at a point $x\in M$ if
  there exists a neighborhood $U$ of $x$ such that restriction
  $f|_{U}$ is a homeomorphism.
\end{definition}
\begin{definition}
  A map $f\colon M\to M$ is called \emph{covering} at a point $x\in M$ if
  there exists an open neighborhood $U$ of the point $f(x)$ such that
  $f^{-1}(U)=\cup_{x_i\in f^{-1}(x)}V_{x_i}$ where
  \begin{itemize}
  \item $V_{x_i}\cap V_{x_j}=\emptyset$ if $x_i\not=x_j$;
  \item
    for each $x_i\in f^{-1}(x)$ $f(V_{x_i})=U$;
  \item
     the restriction $f|_{V_{x_i}}$ is a homeomorphism.
  \end{itemize}
\end{definition}

For the finite-to-one maps the notions of local homeomorphism and a
covering at a point are equivalent.

\begin{definition}
  A map $f\colon M\to M$ is called
  \begin{itemize}[leftmargin=*]
    \item
    \emph{local homeomorphism} if
    $f$ is local homeomorphism at every point $x\in M$;
  \item
    \emph{covering} if
    $f$ is covering at every point $x\in M$;
  \item
    \emph{branched covering} if
    $f$ is covering at every point $x\in M$ except from a nowhere
    dense set known as branch set.
  \end{itemize}
\end{definition}
\begin{definition}
  A point $x\in M$ is called \emph{singular} point of $f$ if $f$ is
  not a local homeomorphism at the point $x$.
\end{definition}
Denote by $\Sing(f)$ the set of singular points of $f$.

\subsection{Some properties of inner mappings.}


Let $f\colon M\to M$ be an inner mapping
of a compact connected two-dimensional manifold $M$ without boundary.

\begin{lemma}[a collection of facts about inner mappings, see~\cite{MR0082545_Stoilov56,Trokhimchuk2008__en,VlaBook2014__en}]\label{lm:collection_inner_mapping_properties}
Let $f\colon M\to M$ be an inner mapping
of a compact connected two-dimensional manifold $M$
with topological defree $\deg f$.
Then
\begin{enumerate}[label={\rm(\arabic*)}]
\item $f$ is finite-to-one map;
\item $f$ is closed map;
\item $\Sing(f)$ is finite;
\item for any point from $M\setminus \Sing(f)$
  the number of preimages is constant
  and is equal to $|\deg f|$;
\item for any point $p \in \Sing(f)$
  the number of preimages $|f^{-1}(p)|$ is less then $|\deg f|$;
\item $f$ is branched covering with the branch set $\Sing(f)$.
\end{enumerate}
\end{lemma}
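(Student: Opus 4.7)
The plan is to reduce every item to a local normal form for inner mappings on surfaces, which is a sharpening of Stoilov's Theorem~\ref{th:stoilov_theorem}: around every point $x \in M$ there are local coordinates on source and target in which $f$ is conjugate to the map $z \mapsto z^{k(x)}$ for some positive integer $k(x) \geq 1$, with $k(x)=1$ exactly when $f$ is a local homeomorphism at $x$, and $k(x)\geq 2$ exactly when $x\in\Sing(f)$. Getting this normal form is the main obstacle: Stoilov's theorem as stated produces only a global factorisation through a holomorphic map to $\cc$, whereas here one needs the local analytic model for a self-map of a surface; I would invoke Trokhymchuk's extension~\cite{Trokhimchuk2008} (or equivalently apply Stoilov's theorem in a chart on the target), and the remaining work is essentially bookkeeping.

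With the normal form in hand, claim (3) comes first: the critical set of $z\mapsto z^k$ is $\{0\}$, so every singular point is isolated in $\Sing(f)$; closed discrete subsets of the compact manifold $M$ are finite. Claim (1) follows because the local model has finite fibres, so each point $y\in M$ has finitely many preimages in any relatively compact neighbourhood of a point in $f^{-1}(y)$; covering $f^{-1}(y)$ by such neighbourhoods and using compactness of $M$ yields $|f^{-1}(y)|<\infty$. Claim (2) follows from (1) together with openness: if $F\subset M$ is closed and $y_n=f(x_n)\to y$ with $x_n\in F$, extract a convergent subsequence $x_n\to x\in F$ by compactness of $M$; then $y=f(x)\in f(F)$, so $f(F)$ is closed.

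For (4), (6) and the second half of (5) I would use degree theory. On $M\setminus\Sing(f)$ the map $f$ is a local homeomorphism and, by (2), also closed, hence a proper covering onto its image; since $M$ is connected and $f$ is surjective, the covering has finitely many sheets whose number equals the topological degree $|\deg f|$, proving (4). At $p\in\Sing(f)$ the local model $z\mapsto z^{k}$ with $k\geq 2$ shows that $k$ preimage-germs of nearby regular points coalesce at $p$; summing these local multiplicities over $f^{-1}(p)$ recovers $|\deg f|$, while the number of distinct points in $f^{-1}(p)$ is strictly smaller because at least one term in the sum exceeds $1$, which is precisely (5). Putting (3), (4), and the local normal form together gives (6): $f$ is a covering at every non-singular point, and $\Sing(f)$ is a finite, hence nowhere dense, branch set.

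The principal technical point is thus the passage from Stoilov's global factorisation to the local holomorphic normal form; the rest is assembling standard consequences of the $z\mapsto z^{k}$ picture with compactness of $M$, and for a survey-style ``collection of facts'' lemma I would likely present it in this order so that each item is available when invoked in the next.
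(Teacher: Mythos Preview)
The paper does not prove this lemma: it is stated as a ``collection of facts'' with citations to Stoilov, Trokhymchuk, and the author's book, and no argument is given in the text. Your sketch via the local normal form $z\mapsto z^{k(x)}$ is the standard route and is essentially what one finds in those references; indeed the paper records this normal form immediately afterwards as the Stoilov Lemma (Lemma~\ref{lm:stoilov_lemma}).

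Two small points worth tightening. For (2) you invoke ``(1) together with openness'', but your actual argument uses neither: a continuous map of a compact space to a Hausdorff space is automatically closed, and that is all your sequential argument needs. For (4) the restriction $f|_{M\setminus\Sing(f)}$ is not itself closed (you have removed points from the domain but not the corresponding branch values from the target); the clean statement is that $f$ restricted to $M\setminus f^{-1}(f(\Sing(f)))\to M\setminus f(\Sing(f))$ is a finite covering, and the sheet number is then read off as $|\deg f|$. This also clarifies (5): the deficit in preimage count occurs at branch \emph{values}, i.e.\ points of $f(\Sing(f))$, which is how the paper's defect number $e(p)=|\deg f|-|f^{-1}(p)|$ should be read.
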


\begin{definition}
  For a $p\in M$ 
  define its \emph{defect number} $e(p)$: $e(p)=|\deg f| - |f^{-1}(p)|$.
\end{definition}

\begin{lemma}[Stoilov Lemma, S.~Stoilov~\cite{MR0082545_Stoilov56}]\label{lm:stoilov_lemma}
  Let $p\in \Sing(f)$. Then there exists an open neighborhood $U(p)$ and
  a homeomorphism $h\colon U\to U$ such that $h(p)=f(p)$ and
  $f|_{U}=(z-f(p))^{e(p)+1}\circ h$ in some local coordinates on $U$.
\end{lemma}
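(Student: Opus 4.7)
The plan is to reduce the question to the local structure of a holomorphic function via Stoilov's global factorization (Theorem~\ref{th:stoilov_theorem}), apply the classical $k$th-root trick of complex analysis to obtain a power-map normal form, and finally identify the resulting exponent with $e(p)+1$.

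Concretely, I would first restrict $f$ to a small coordinate disk around $p$ whose image lies in a single coordinate chart around $f(p)$, so that $f$ can locally be viewed as an inner mapping into $\mathbb{C}$. Theorem~\ref{th:stoilov_theorem} then supplies a homeomorphism $\psi$ of this disk and a holomorphic function $F$ with $f = F \circ \psi$. Since being a local homeomorphism is preserved by composition with a homeomorphism, $\psi(p)$ is a critical point of $F$, and one has the standard expansion
\begin{equation*}
F(z) - F(\psi(p)) = (z - \psi(p))^k \, G(z),
\end{equation*}
with $k \ge 2$ an integer and $G$ holomorphic satisfying $G(\psi(p)) \neq 0$. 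Shrinking to a simply connected neighborhood $U$ of $\psi(p)$ on which $G$ does not vanish, one can extract a holomorphic $k$th root $g$ of $G$, and setting $\phi(z) := (z - \psi(p))\, g(z)$ yields $F(z) = F(\psi(p)) + \phi(z)^k$. Since $\phi'(\psi(p)) = g(\psi(p)) \neq 0$, $\phi$ is a local biholomorphism; translating so the target is centered at $f(p)$ and composing with $\psi$ produces the required homeomorphism $h$ on a further shrunk neighborhood of $p$, satisfying $h(p) = f(p)$ and $f|_U = (z - f(p))^k \circ h$ in the chosen coordinates.

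The remaining work is to identify the integer $k$---the local topological degree of $f$ at $p$ extracted above---with $e(p) + 1$, and this is where the main bookkeeping lies. The integer $k$ is local in nature, whereas $e(p) = |\deg f| - |f^{-1}(p)|$ is a global preimage count; the bridge is provided by the branched-covering property (items~(4) and~(6) of Lemma~\ref{lm:collection_inner_mapping_properties}), which guarantees that on any fibre of $f$ the sum of local degrees over all preimages equals $|\deg f|$, so the total ramification over a value equals that value's defect number. Tracking this carefully on the fibre containing $p$ yields $k = e(p) + 1$. The analytical content of the proof is routine complex analysis once Stoilov's theorem is in hand; the conceptual obstacle is the reduction to the holomorphic case, which is absorbed into Theorem~\ref{th:stoilov_theorem}.
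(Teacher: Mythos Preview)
The paper does not prove this lemma; it is quoted from Stoilov with a citation, so there is no argument to compare against. Your reduction to a holomorphic map via Theorem~\ref{th:stoilov_theorem} followed by the $k$th-root extraction is the standard route and correctly produces the local normal form $(z-f(p))^{k}\circ h$ with $k$ the local topological degree of $f$ at $p$.

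The gap is in your final paragraph. The degree-counting identity you invoke says that for any value $q$ one has $\sum_{x\in f^{-1}(q)}(k_{x}-1)=|\deg f|-|f^{-1}(q)|=e(q)$; applying it on the fibre through $p$ yields $e(f(p))$, not $e(p)$, and even then $k-1=e(f(p))$ only when $p$ is the \emph{sole} branch point over $f(p)$, which you have not argued. In fact, with the paper's own definition $e(p)=|\deg f|-|f^{-1}(p)|$, the identity $k=e(p)+1$ is false in general: for $f(z)=z^{2}+1$ on $\overline{\cc}$ the point $p=0$ lies in $\Sing(f)$ with local degree $k=2$, yet $f^{-1}(0)=\{\pm i\}$ gives $e(0)=0$, so $e(0)+1=1\neq 2$. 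The statement is only correct if $e(p)$ is read as the local ramification index $k-1$ at $p$ (the classical meaning in Stoilov's work), not as the global preimage deficit defined just above it in the paper. You should flag this discrepancy explicitly rather than attempt to bridge it, since no valid bridge exists under the paper's stated definition.
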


It follows that points with defect number $e(p)=0$ are the points of local
homeomorphism, whereas points with defect number $e(p)>0$ are the singular
points of $f$.
The local behavior of an inner mapping around a singular point $p$
is determined by complex function $z^{e(p)+1}$. In particular,
singular points are isolated.


\section{Topological dynamics of inner mappings and its difference in case of homeomorphisms}

Let $M$ be a compact connected two dimensional manifold without boundary.
Let $f\colon M\to M$ be an inner mapping, i.e.
a continuous non\discretionary{-}{}{-}invertible open (an image of an open set is open)
finite-to-one surjective map of $M$.
The iterations of $f$ create a dynamical system on $M$.

Topological theory of dynamical systems study
maps up to topological conjugacy.
For a map $f$ its conjugacy class is the set $\{h^{-1}\circ f \circ h| h \in \textit{Homeo}(M) \}$.
However, since topological dynamics methods were developed for
homeomorphisms, they cannot be directly applied to inner mappings. We
will address this challenge by adapting classical topological dynamics
definitions to work with non\discretionary{-}{}{-}invertible inner
mappings.

For more detail, see the author's book~\cite{VlaBook2014__en}.

\subsection{Trajectories.}

\begin{definition}\label{def:trajectory}\label{def:perp_trajectory}
  Consider a point $x\in M$.
  \begin{itemize}
  \item
    $O^+_f(x)=\{f^n(x)|\ n\ge 0\}$ is a
    \emph{positive semitrajectory} of $x$;
  \item
    $O^-_f(x)=\{f^{-n}(x)|\ n \ge 0\}$
    is a
    \emph{negative semitrajectory} of $x$;
  \item
    A sequence $(x_n)_{n\in \zz}$, where $x_0=x$, $f(x_n)=x_{n+1}$
    is called a
    \emph{partial trajectory} of $x$;
  \item
    $O_f(x)=\cup_{y\in O^+_f(x)} O^-_f(y)$
    is a
    \emph{full trajectory} of $x$;
  \item
    $O^\perp_f(x)=\{f^{-n}\left(f^n(x)\right)|\ n\ge 0\}$
    is a
    \emph{neutral section} of (the full trajectory of) $x$;
  \end{itemize}
\end{definition}

\begin{figure}[htbp]
    \includegraphics[scale=0.7]{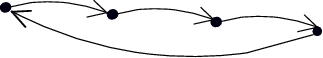}
    \includegraphics[scale=0.8]{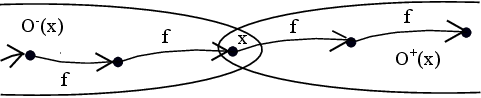}
\caption{
Periodic and generic $O_f(x)$ for homeomorphisms. 
}
\end{figure}

\begin{figure}[htbp]
    \includegraphics[scale=0.5]{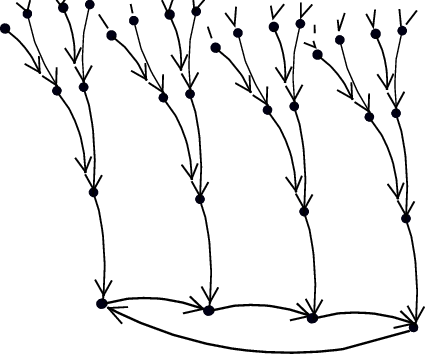}
    \includegraphics[scale=0.66]{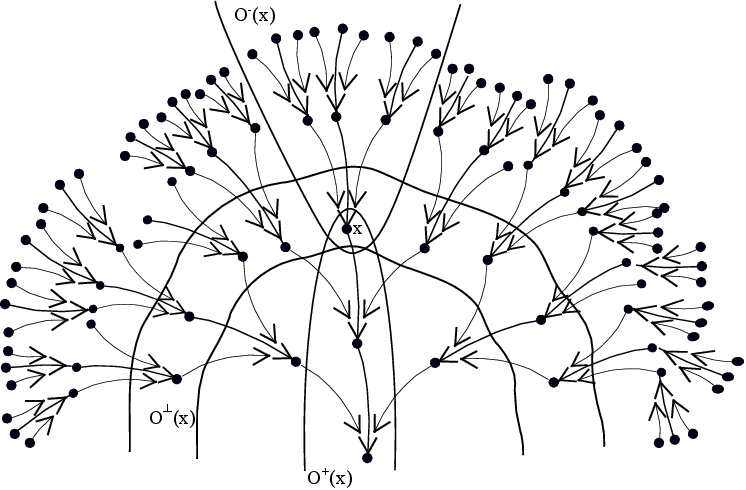}
\caption{
Periodic and generic $O_f(x)$ for inner mappings.
}
\end{figure}


By definition,
$O^+_f(x)\subset O_f(x)$,
$O^-_f(x)\subset O_f(x)$,
$O^\perp_f(x)\subset O_f(x)$.
Note that
partial trajectories of $x$ are not uniquely defined: in the case of
non\discretionary{-}{}{-}invertible maps the point $x$
can have infinitely many different partial trajectories.
Any partial trajectory of $x$ is an invariant set and is contained in $O_f(x)$.
Also notice that $O_f(x)$ is a totally invariant set.

If $f$ is invertible, then $O^\perp_f(x)$ coincide with $\{x\}$, while
$O_f(x)$ and partial trajectories turn into the set 
$\{f^n(x)|\ n\in\zz \}$.

Evidently, the full trajectory,
$O_f(x)$ is the smallest topological totally invariant set preserved
under topological conjugacy.
However, for general non\discretionary{-}{}{-}invertible maps, their full
trajectories are unique sets
that can differ significantly from each other topologically.
This makes describing the topological conjugacy classes of general
non-invertible maps significantly more challenging.
In contrast, for homeomorphisms, there are only a few
possible trajectory types: a generic trajectory without periodic points
and periodic trajectories of period $n$, $n\ge 1$.
This fact explains why classical topological dynamics is well-developed for homeomorphisms but not for general non\discretionary{-}{}{-}invertible maps.

Among non\discretionary{-}{}{-}invertible maps, the class of inner mappings exhibits
dynamical systems most similar to homeomorphisms.
For inner mappings, by definition, the full trajectory, $O_f(x)$, is a closed
and discrete set.  If there are no singular points, then for each
degree $d$, $|d|>1$, for an inner mapping $f$ of degree $d$,
each point has exactly $|d|$ preimages.
Furthermore, the set of all possible trajectories consists only of the
generic trajectory without periodic points and periodic trajectories of period $n$, $n\ge 1$,
which is similar to the case of homeomorphisms.
The presence of a finite number of singular points introduces additional
complexities, but due to their finite number, accounting for all
possible classes of trajectories of inner mappings remains
manageable.

\subsection{Neutrally saturated sets.}

\begin{definition}\label{def:neutrally_saturated}
  A set $X\subset M$ is called
  \emph{neutrally saturated},
  if $\forall x\in X$ $O^\perp(x)\subset X$.
\end{definition}
In particular, a totally invariant set is neutrally saturated.

For any set $X$ the set $X^\perp=\cup_{n\ge 0}f^{-n}\circ f^n(X)$
is a smallest neutrally saturated set such that $X\subset X^\perp$.
\begin{definition}\label{def:neutral_saturation}
  The set $X^\perp$ for a set $X\subset M$ is called the
  \emph{neutral saturation} of the set $X$.
\end{definition}

For example,
consider an attractor $A$ such that $A$ is not totally invariant, i.e. $f^{-1}(A)\not=A$.
Then
the set $A^\perp\setminus A$ consists of additinal preimages of $A$:
$A^\perp=\cup_{n\ge 0}f^{-n}\circ f^n(A)$, $f^n(A)=A$ $\Rightarrow$
$A^\perp=\cup_{n\ge 0}f^{-n}(A)$.

\subsection{Wandering and nonwandering points.}

General non\discretionary{-}{}{-}invertible maps do not preserve neighborhoods of points.
For example, a simple map $x^2$ maps the open neighborhood $(-1,1)$ of $0$
to the semi-interval $[0,1)$ which is neither open set, nor closed set, and not a
neighborhood of $0$ (it does not contain a metric $\varepsilon$-ball for any $\varepsilon>0$).
It only gets worse in dimensions 2 and above.
In contrast, in the case of homeomorphisms 
a neighborhood $U(x)$ of a point $x\in M$ can be mapped to any point
of $O_f(x)$ by $f^{n}$ or $f^{-n}$, and the image of $U(x)$ is, by
definition, homeomorphic.

In this regard inner mappings resemble homeomorphisms.
As inner mappings are open and close maps, the image and preimage of
open or closed neighborhood is also open or closed neighborhood.
Even more, inner mappings have the structure of branched coverings.
This means that if $U(x)$ is small enough
and $O_f(x)$ does not have any singular points of $f$,
then we can build a chain of local homeomorphisms,
moving $U(x)$ to any point of $O_f(x)$ homeomorphically.
If there is a singular point, we can still build an image of $U(x)$ locally
which is not a homeomorphic image, but still
remains open or closed connected neighborhood.

\begin{figure}[htb]
   a)\includegraphics[scale=0.6]{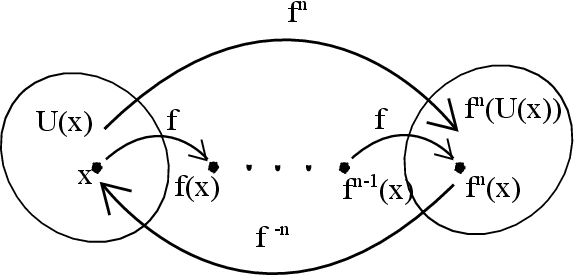}
   b)\includegraphics[scale=0.6]{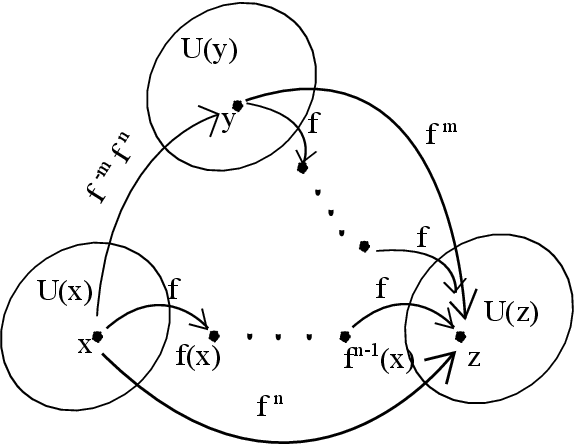}
\caption{
Dynamics of neighborhoods along $O_f(x)$. a) homeomorphisms b) inner mappings.
}
\end{figure}

\begin{definition}\label{def:wandering}
  Denote by $U(x)$ an open neighborhood of a point $x$.
  A point $x$ is called
  \begin{itemize}
  \item
  \emph{nonwandering} point of $f$
    if for any neighborhood $U(x)$ and for all
    $m\in {\mathbb Z}$, $m\not=0$:  $f^{m}(U) \cap U \not= \emptyset$.
  \item
  \emph{\still wandering} point of $f$ if $x$ isn't nonwandering.
  \item
  \emph{\forever wandering} or, for short, \emph{wandering} point of $f$
  if there exists a neighborhood $U(x)$ such that $\forall n\ge 0$,
  $\forall m\in {\mathbb Z}$, $m\not=n$:  $f^{m}\left(f^{n}\left(U\right)\right) \cap U = \emptyset$.
  \item
  \emph{\prenonwandering{}} point of $f$
  if $x$ is not \forever wandering. 
  \item
\emph{\prenonwandering{} wandering} if $x$ is \prenonwandering{} but not nonwandering.
  \end{itemize}
\end{definition}

Note that if $x$ is \prenonwandering{} point of $f$ then from definition there exists
$n\ge 0$ such that $f^{n}(x)$ is a nonwandering point of $f$.

Denote by $\nonwanderingset(f)$, $\prenonwanderingset(f)$, $\stillwanderingset(f)$ and $\foreverwanderingset(f)$ the sets of nonwandering,
\prenonwandering{}, \still wandering and \forever wandering points of $f$.
By definition, $\nonwanderingset(f)\subset\prenonwanderingset(f)$,
$\stillwanderingset(f)\subset\foreverwanderingset(f)$.
The set $\nonwanderingset(f)$ is invariant but not totally invariant.
The set $\stillwanderingset(f)$ is not invariant when $f$ is not
invertible due to existsence of \prenonwandering{} wandering points.
The sets $\prenonwanderingset(f)$ and its counterpart $\foreverwanderingset(f)$
are totally invariant.

When $f$ is a homeomorphism, the sets
$\foreverwanderingset(f)$ and $\prenonwanderingset(f)$ both become 
classic wandering and nonwanderings sets of homeomorphisms
$\classicwanderingset(f)$ and $\classicnonwanderingset(f)$.
The same is true for $\stillwanderingset(f)$ and $\nonwanderingset(f)$,
because the definitions of $\stillwanderingset(f)$ and $\nonwanderingset(f)$
are just verbatim definitions of wandering and nonwanderings sets of homeomorphisms,
applied 
to non\discretionary{-}{}{-}invertible case.
We focus on the set $\foreverwanderingset(f)$
as the non\discretionary{-}{}{-}invertible analog to the classical set
of wandering points, $\classicwanderingset(f)$,
because the alternative choice the set $\stillwanderingset(f)$ is not invariant.

\subsection{Decomposition of wandering set of homeomorphisms.}
\label{sec:decomposition_of_wandering_set}

Essentially, when restricted to their wandering sets, dynamical
systems of homeomorphisms become discrete $\zz$-actions.
In contrast to some simple $\zz$-actions, such as
$z\mapsto z+1$ on $\cc$ or $z\mapsto 2z$ on $\cc\setminus \{0\}$,
a typical wandering set of homeomorphisms as a whole is still topologically intricate and
can be further decomposed into ``elementary blocks'', open pieces with simple topology and
with $\zz$-actions that have uniform $-\infty$ and $+\infty$ directions,
and some ``separatrix sets'', which divide the
wandering set into those ``elementary blocks''.

An example of such decomposition is the classical theory of regular subsets of
the wandering set, introduced by Birkhoff \cite{BS}.
Another example is filtration of the wandering set by attractor\discretionary{-}{}{-}repeller pairs
described in Section~\ref{sec:attractor_repeller_pair}.
We won't go into the details regarding those decompositions, as
they are used purely as motivation of the research.
Let's limit ourselves to the class of ``elementary blocks'' which
consists of invariant open connected sets.
\nocite{BirkgoffDS, BS}

\begin{figure}[htbp]\label{fig:homeomorphisms-on-S}
    a)\includegraphics[scale=0.3]{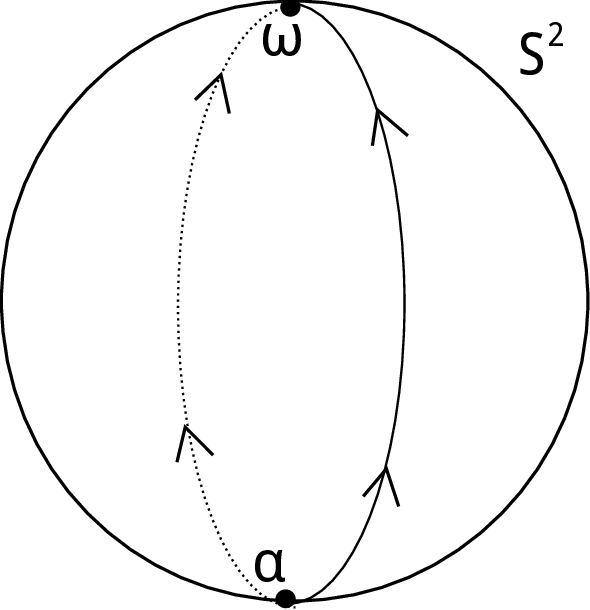}
    b)\includegraphics[scale=0.3]{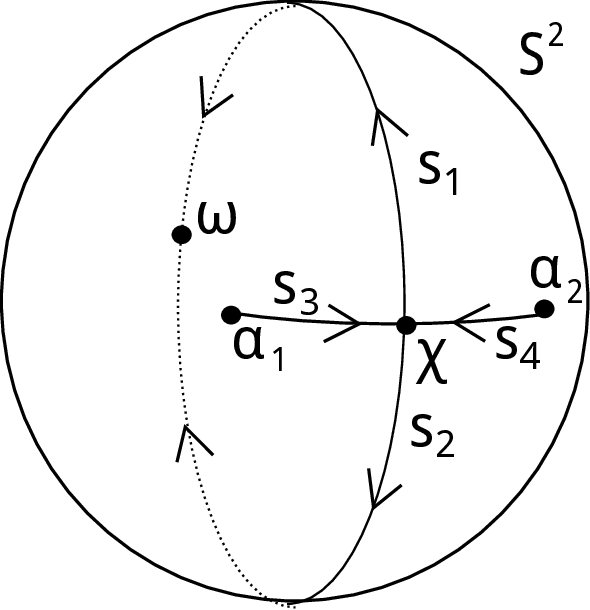}
\caption{
Homeomorphisms on $S^2$ and their wandering sets.
}
\end{figure}

\begin{example}[Figure~\ref{fig:homeomorphisms-on-S}a]\label{ex:regcomp_S2}
Consider a dynamical system on a sphere
where the nonwandering set consists only of two points:
a sink (attracting fixed point) $\omega$ and a source (repelling fixed point) $\alpha$
which create attractor\discretionary{-}{}{-}repeller pair $(\omega, \alpha)$.
The rest of the sphere is wandering set which is homeomorphic to an open annulus.
This wandering set itself is an ``elementary block''.
Any pair of such homeomorphisms is topologically conjugate.
\end{example}

\begin{example}[Figure~\ref{fig:homeomorphisms-on-S}b]
Consider a Morse-Smale diffeomorphism of a sphere $S^2$.
Its nonwandering set consists of four points:
a sink $\omega$, a saddle $\chi$
and two sources $\alpha_1$ and $\alpha_2$.
There are 3 attractor\discretionary{-}{}{-}repeller pairs.
The 1st pair is attractor $\{\omega\}$ and repeller
$\{\alpha_1$, $s_3$, $\chi$, $s_4$, $\alpha_2\}$.
This pair decompose the wandering set into
a ``separatrix set'' of $s_3$ and $s_4$.
and a single
``elementary block'', which lies in the basin of attraction of $\omega$
and called an $\omega$-regular component by Birkhoff.
The 2nd and 3rd pairs are one of the sources $\alpha_1$, $\alpha_2$
as repeller and closure of the basin of repulsion of another source as
attractor.
They decompose the wandering set into
a ``separatrix set'' of $s_1$ and $s_2$.
and two
``elementary blocks'', which lie in the basins of repulsion of
$\alpha_1$ and $\alpha_2$
and called an $\alpha$-regular components by Birkhoff.
\end{example}

In the examples above all the ``elementary blocks'' are
homeomorphic to annuli. 
In fact,
Birkhoff and Smith~\cite{BirkgoffDS, BS} proved that
``elementary blocks'' of homeomorphisms,
which are invariant open connected sets,
 on compact surfaces
can only be homeomorphic either to a disc or an annulus.
Those results also apply to invariant open connected sets
cut from the wandering set by an attractor\discretionary{-}{}{-}repeller pair.

However, the results of Birkhoff and Smith regarding the homeomorphisms do not apply to
non\discretionary{-}{}{-}invertible inner mappings of surfaces.
A simple example is the holomorphic map $z^2+1$ of the sphere $\overline{\cc}$.
Its Julia set is the repeller and a Cantor set. $\infty\in\overline{\cc}$
is the attractor. Their complement is the Fatou set of $z^2+1$, which for
this map coincides with its wandering set.
Note that the wandering set of $z^2+1$ is totally invariant connected
open set and its complement is the
attractor\discretionary{-}{}{-}repeller pair.
Then it consists of single ``elementary block'' analog for inner mappings.
However, its topological type is not a disc or an annulus, but a disc
with a Cantor set punctured.

Let $f\colon M\to M$ be an inner mapping of a compact surface $M$ such
that $f$ has an totally invariant connected
open subset $S$ of its wandering set cut off by a
attractor\discretionary{-}{}{-}repeller pair $(A, R)$.
The question arises:
what are possible topological types of $S$?
Consider a special case first.
Let's there be a neighborhood $U$ of the attractor $A$
such that $U\cap S$ is an annulus.
What are possible topological types of $S$?
These questions are the motivation for this article.

\subsection{Topological conjugacy problem of wandering set.}
\label{sec:topological_conjugacy_problem_of_wandering_set}

The key difference between wandering sets of homeomorphisms and inner
mappings lies in how they relate to topological conjugacy. For
homeomorphisms, topological invariants,
which distinguish conjugacy classes, are concentrated
within the nonwandering set. In contrast, inner mappings exhibit an
abundance of conjugacy classes even when restricted to their wandering
set.

For instance,
the non\discretionary{-}{}{-}invertible analog of the example~\ref{ex:regcomp_S2}
is an $n$-sheeted branched covering on a sphere where
the nonwandering set consists of two branch points,
an attracting singular sink and a repelling singular source, and the rest of
the sphere are \forever wandering 
points.
There is an example of
a countable sequence of such inner mappings which are not pairwise topologically conjugate
\cite{Vla:BRD5:ODESSA1__en}.

Presumably there is, in fact, uncountable
number of inner mappings which are not pairwise topologically conjugate.
To explain this,
consider a set $O^\perp_f(x)=\{f^{-n}\left(f^n(x)\right)|\ n\ge 0\}$
for a point $x\in M$
(Definition~\ref{def:perp_trajectory}).
When $f$ is an invertible map, $O^\perp_f(x)=\{x\}$.
However, in the non\discretionary{-}{}{-}invertible case,
$O^\perp_f(x)$ typically is a countable set.
We can consider the main dynamical system  with the map $f$ as the
action of $\zz$.
Then we can consider $O^\perp_f(x)$ as the action of another group,
the iterated monodromy group, that acts locally
in a neighborhood of $O^\perp_f(x)$
using single\discretionary{-}{}{-}valued branches of the family of
multi\discretionary{-}{}{-}value maps $f^{-n}\circ f^n$.
Topological conjugacy has to preserve
not only $\zz$-action on $x$, but $O^\perp_f(x)$ too.
And $O^\perp_f(x)$ as a countable set will have a closure,
which is some fancy closed fractal set
due to aforementioned monodromy action.
This closure should also be preserved by continuity.
A small perturbation of $f$ will cause a countable set $O^\perp_f(x)$
to have another closure, making the perturbed map topologically different.

Consequently, the wandering set of non\discretionary{-}{}{-}invertible inner mappings
exhibits a level of complexity comparable to the nonwandering set of
invertible maps.
The set of equivalence classes of wandering sets up to conjugacy is
most likely uncountable and can not possibly be described.
It is a so called ``wild'' topological problem.
Therefore, we can only compromise and 
reduce complexity by imposing restrictions
on the dynamics in the wandering set.

\subsection{Restrictions on the wandering set.}

As mentioned in Section~\ref{sec:topological_conjugacy_problem_of_wandering_set},
we want to impose restrictions on the wandering set to reduce complexity.
Holomorphic maps are
the most well-studied subclass of non\discretionary{-}{}{-}invertible
inner mappings.
Let us create the restrictions using holomorphic maps as an example.

Let an analytic function $f$ has an attracting critical point
(in the terminology of holomorphic dynamics --- a superattracting point).
One can always choose a local uniformizing parameter $z$ such that
$z=0$ will be a superattracting fixed point of the function $f(z)$.
Then $f(z)$ has the form
\begin{equation}\label{eq:fzn-bottcher}
f(z) = a_n z^n + a_{n+1} z^{n+1} + \dots,
\end{equation}
where $n\ge 2$, $a_n\not=0$.

\begin{theorem}[B\"{o}ttcher's theorem, see~\cite{Milnor1999}]\label{th:Bottcher}
  Let the function $f$ be of the form (\ref{eq:fzn-bottcher}).
  Then there exists a local holomorphic change of
  coordinate $w=\phi(z)$ which conjugates $f$ to the $n$-th power map
  $w\mapsto w^n$ throughout some neighborhood of $\phi(0)=0$.
\end{theorem}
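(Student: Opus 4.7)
The plan is to exhibit $\phi$ as a limit of renormalized iterates. First I would normalize: choose $\lambda \in \cc$ with $\lambda^{n-1} = a_n$ and pass to the coordinate $\zeta = \lambda z$, so that in the new coordinate $f$ takes the form
\[
f(\zeta) = \zeta^{n}\bigl(1 + c_{1}\zeta + c_{2}\zeta^{2} + \cdots\bigr) = \zeta^{n} u(\zeta), \qquad u(0) = 1.
\]
Thus we may assume $a_{n}=1$ from the outset. Choose $r>0$ small enough that $u$ is holomorphic on the closed disk $\overline{D_{r}}=\{|\zeta|\le r\}$ and satisfies $|u(\zeta)-1|<\tfrac{1}{2}$ there; then $|f(\zeta)|\le \tfrac{3}{2}|\zeta|^{n}$, and by shrinking $r$ further we may guarantee $f(\overline{D_{r}})\subset D_{r}$ and that every iterate $f^{\circ k}$ contracts $D_{r}$ geometrically toward $0$.

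Next I would define the candidate conjugacy by
\[
\phi_{k}(\zeta) = \bigl(f^{\circ k}(\zeta)\bigr)^{1/n^{k}},
\]
where the branch of the $n^{k}$-th root is fixed by the normalization $\phi_{k}(\zeta) = \zeta\bigl(1 + O(\zeta)\bigr)$ near the origin. This is well-defined because an easy induction gives $f^{\circ k}(\zeta)=\zeta^{n^{k}}\,U_{k}(\zeta)$ with $U_{k}(0)=1$, so one takes the principal branch of $U_{k}^{1/n^{k}}$. The telescoping identity
\[
\log \phi_{k+1}(\zeta) - \log \phi_{k}(\zeta) = \frac{1}{n^{k+1}}\log u\bigl(f^{\circ k}(\zeta)\bigr)
\]
combined with the geometric decay $|f^{\circ k}(\zeta)|\le C\,r^{\,n^{k}}$ and the bound $|\log u|\le 2|u-1|$ shows that the series $\sum_{k}\bigl(\log\phi_{k+1}-\log\phi_{k}\bigr)$ converges uniformly on $D_{r}$. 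Hence $\phi_{k}\to\phi$ uniformly on $D_{r}$ to a holomorphic function with $\phi(0)=0$ and $\phi'(0)=1$, so $\phi$ is a local biholomorphism at the origin.

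Finally I would verify the conjugacy. By construction
\[
\phi_{k+1}(\zeta)^{\,n} = \bigl(f^{\circ(k+1)}(\zeta)\bigr)^{1/n^{k}} = \phi_{k}\bigl(f(\zeta)\bigr),
\]
so passing to the limit $k\to\infty$ yields $\phi(\zeta)^{n}=\phi(f(\zeta))$, which is the desired conjugacy $w\mapsto w^{n}$ in the coordinate $w=\phi(\zeta)$. The one genuine obstacle is bookkeeping for the branches of the $n^{k}$-th root: one must check that the normalization $\phi_{k}(\zeta)/\zeta\to 1$ at the origin singles out a unique branch consistent across $k$, and that this branch extends holomorphically to all of $D_{r}$ (which uses simple connectedness of $D_{r}$ together with the fact that $U_{k}$ does not vanish there). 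Once this is in place, the uniform convergence and the conjugacy relation are essentially formal consequences of the exponential contraction under iteration.
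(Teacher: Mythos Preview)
The paper does not prove B\"{o}ttcher's theorem: it is stated as a classical result with a citation to Milnor~\cite{Milnor1999}, and is used only as background motivation for the definition of B\"{o}ttcher and Pseudo-B\"{o}ttcher components. There is therefore no ``paper's own proof'' to compare against.

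That said, your argument is correct and is precisely the standard proof one finds in Milnor's book: normalize so that the leading coefficient is~$1$, define $\phi_{k}=(f^{\circ k})^{1/n^{k}}$ with the branch fixed by $\phi_{k}'(0)=1$, and show uniform convergence via the telescoping sum $\sum_{k}n^{-(k+1)}\log u(f^{\circ k})$. One small remark: you do not actually need the sharp decay $|f^{\circ k}(\zeta)|\le C\,r^{n^{k}}$ for convergence---since $|\log u|$ is bounded on $D_{r}$ and the iterates remain in $D_{r}$, the geometric factor $n^{-(k+1)}$ alone already makes the series absolutely summable. Your identification of the branch-selection issue as the only delicate point is apt, and your resolution (via $U_{k}(0)=1$ and simple connectedness of the disk) is the standard one.
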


B\"{o}ttcher's theorem has an interesting consequence in
topological dynamics of inner mappings, as pointed
in~\cite{Vla:BRD6:ODESSA2__en}.
Consider a map $z^n$, $n\ge 2$.
It is easy to verify by direct calculation that for any
$z_0\in\cc$ its neutral section $O^\perp(z_0)$ is everywhere dense in the level
curve $|z|=|z_0|$ of the real function $|z|$.
As $O^\perp(x)$ is topological invariant of inner mappings,
then its closure $\overline{O^\perp(x)}$ is also the topological invariant.
For the map $z^n$ it means that the foliation on the level
curves of the function $|z|$ is the topological invariant of $z^n$.
Not only that, the function $|z|$ can be considered as a special
Lyapunov function in the neighborhood of $0$ such that its
foliation on the level curves is invariant under the action of $z^n$:
a circle $|z|=const$ moves onto the circle $|z|=const^n$.

Together with B\"{o}ttcher's theorem~\ref{th:Bottcher} it means that
any superattracting point $p$ of a holomorphic map $f$
has a distinguished Lyapunov function $\phi$ in
a neighborhood of $p$
such that
the foliation on the level curves of $\phi$ is $f$-invariant,
and this foliation itself is a topological invariant of the
superattracting point $p$, which is preserved under topological conjugacy.

Let's write
a purely topological definition of this constraint on the wandering set
which is derived from the behavior of the holomorphic functions
in a neighborhood of an attracting singular point.

\begin{definition}\label{def:Bottcher_point}
  An attracting singular point $p\in M$
  of an inner mapping $f\colon M\to M$
  is called \emph{B\"{o}ttcher attracting point},
  if there exists an open connected neighborhood $V$ of $p$ and a function
  $\phi\colon V\setminus\{p\}\to \rr$ such that
  \begin{enumerate}
  \item\label{enum:Bottcher_point1}
    $|\deg f_{V\setminus\{p\}}|>1$;
  \item
    $\phi$ is monotonically decreases along the partial trajectories of $f$;
  \item
    Each level curve of $\phi$ is homeomorphic to a circle;
  \item
    foliation on the level curves of $\phi$ is $f$-invariant;
  \item\label{enum:Bottcher_point5}
    For any point $x\in V\setminus\{p\}$, $O^\perp(x)$ is dense in the
    level curve $\phi^{-1}(\phi(x))$.
\end{enumerate}
\end{definition}
This definition captures the key properties observed in holomorphic
dynamics near superattracting critical points, but formulated in a
purely topological context applicable to general inner mappings.

As we are only interested in the wandering set,
an attracting singular point can be replaced by an abstract attractor.
In that case
we get a definition of \emph{B\"{o}ttcher component} from~\cite{Vla:BRD4:UMG}.
The paper~\cite{Vla:BRD4:UMG} studied conjugacy of B\"{o}ttcher components,
but not their topological type.

However, these constraints imposed on the wandering set for B\"{o}ttcher
components are too restrictive. In the absence of singular points,
such subsets of the wandering set
belong to single conjugacy class~\cite{Vla:BRD4:UMG}.
Imposing such strict limitations on the wandering
set might limit the generality of the analysis. Ideally, we would like
to consider the questions raised in
Section~\ref{sec:decomposition_of_wandering_set} in the most general
case possible.
Any restriction on the wandering set is a technical step back.
In this paper we take a step forward and
lax the condition~\ref{enum:Bottcher_point5}
in Definition~\ref{def:Bottcher_point},
so that $O^\perp(x)$ do not need to be dense in the level curve.
In that case we also do not need the condition~\ref{enum:Bottcher_point1}
and can allow homeomorphisms as well.

The obtained wandering set restriction we 
call \emph{Pseudo\discretionary{-}{}{-}B\"{o}ttcher components}
by analogy with B\"{o}ttcher components.
Its formal definition will be given in Section~\ref{sec:PseudoBottcher}
as Definition~\ref{def:pseudobottcher_basin1}.
Inner mappings with such restriction
already show an abundance of conjugacy classes
in their wandering set~\cite{Vla:BRD5:ODESSA1__en}.

\subsection{Pseudo-B\"{o}ttcher components.}
\label{sec:PseudoBottcher}

From now on let $f\colon M\to M$ be an inner mapping of
compact two-dimensional manifold $M$ without boundary.

Let $(A, R)$ be an attractor\discretionary{-}{}{-}repeller pair with a basin of attraction $B$
and a region of strict wandering $D$
(Definitions \ref{def:strictly_attracting_set} and
\ref{def:region_of_strict_wandering},
$D=B\setminus \cup_{n\ge 0}f^{-n}(A)=B\setminus A^\perp$).
$D$ is a totally invariant set by construction.
Connectivity components of $D$ are called
components of strict wandering (Definition~\ref{def:component_of_strict_wandering}).

Suppose $S\subset D$ is a totally invariant component of strict wandering.

\begin{definition}\label{def:pseudobottcher_basin1}
  $S$ is called \emph{Pseudo-B\"{o}ttcher} component
  if there exists a
  strictly attracting (Definition~\ref{def:strictly_attracting_set})
  neighborhood $U_0$ of the attractor $A$
  such that
  \begin{enumerate}[leftmargin=*]
  \item
  $A$ is generated by $U_0$, i.e. $A=\cap_{n\ge 0}f^n(\overline{U_0})$;
  \item
  the open set $V_0=U_0\cap S$ is connected;
  \item
    \label{enum:pseudobottcher1_3}
  $V_0$ does not have singular points of $f$;
  \item
    \label{enum:pseudobottcher1_tau0}
    There exists a continuous function $\tau_0\colon V_0 \to (-\infty, 0)$ such that
  \begin{enumerate}[label={\rm\roman*)}]
  \item
    \label{enum:pseudobottcher1_tau0_i}
    $\tau_0$ is a trivial fiber bundle with fiber $S^1$;
    i.e. the foliation on the level curves of $\tau_0$ is regular
    and any level curve
    $\tau_0^{-1}(\lambda)$ is homeomorphic to a circle;
  \item
    \label{enum:pseudobottcher1_tau0_ii}
    $\forall x\in V_0$ $\tau(f^{n}(x))=\tau(x)-n$, $n\in\nn$.
  \item
    \label{enum:pseudobottcher1_tau0_iii}
    $\tau_0$ is constant on neutral sections:\par 
    $\forall x\in V_0$ $\tau_0(O^\perp_{f}(x)\cap V_0)=const$.
  \end{enumerate}
  \end{enumerate}
\end{definition}

\begin{lemma}\label{lm:level_curve_regular_covering}
  In the  Pseudo-B\"{o}ttcher component $S$
  the foliation on level curves of $\tau_0$ is invariant
  under the actions of $f$ and $f^{-1}$ restricted to having images and
  preimages in $V_0$.
  In particular, the following equation holds true:
  $\forall t \in (-\infty,0)$ $\forall n\ge 0$
  $f^n(\tau_0^{-1}(t))=\tau_0^{-1}(t-n)$.
\end{lemma}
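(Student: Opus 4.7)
The plan is to establish the displayed equality $f^n(\tau_0^{-1}(t))=\tau_0^{-1}(t-n)$ first and to derive the stated foliation invariance from it. The inclusion $f^n(\tau_0^{-1}(t))\subseteq\tau_0^{-1}(t-n)$ is immediate from condition~\ref{enum:pseudobottcher1_tau0_ii} of Definition~\ref{def:pseudobottcher_basin1}: if $\tau_0(x)=t$, then $\tau_0(f^n(x))=t-n$. So the substance of the proof is the reverse inclusion, which by induction on $n$ reduces to the base case $n=1$.

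For the base case, fix $t<0$ and write $C_s:=\tau_0^{-1}(s)$; by condition~\ref{enum:pseudobottcher1_tau0_i} each $C_s$ is homeomorphic to $S^1$. I would prove that the restriction $f|_{C_t}\colon C_t\to C_{t-1}$ is a local homeomorphism between compact connected 1-manifolds, so that it is automatically a covering map and in particular surjective. To verify the local homeomorphism property, take $x\in C_t$. Since $V_0$ contains no singular points of $f$ (condition~\ref{enum:pseudobottcher1_3}), pick open neighborhoods $U\subset V_0$ of $x$ and $W\subset V_0$ of $f(x)$ for which $f|_U\colon U\to W$ is a homeomorphism. Using $\tau_0\circ f=\tau_0-1$ on $V_0$, the $f|_U$-preimage of $C_{t-1}\cap W$ equals $C_t\cap U$, so $f|_U$ restricts to a bijection from the open arc $C_t\cap U$ onto the open arc $C_{t-1}\cap W$. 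This exhibits $f|_{C_t}$ as a local homeomorphism of circles; compactness then yields $f(C_t)=C_{t-1}$, and induction finishes the equality for all $n\ge 0$.

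For the $f^{-1}$ half of the invariance claim, consider $y\in V_0$ with $\tau_0(y)=s$ and $s+n<0$. Any $x\in f^{-n}(y)\cap V_0$ satisfies $\tau_0(x)=s+n$ by condition~\ref{enum:pseudobottcher1_tau0_ii}, giving $f^{-n}(y)\cap V_0\subseteq C_{s+n}$; the reverse inclusion is precisely the already-proved equality $f^n(C_{s+n})=C_s$. Hence $f^{-n}(C_s)\cap V_0=C_{s+n}$ whenever $s+n<0$, which is the $f^{-1}$-invariance of the level-curve foliation inside $V_0$. The main obstacle is the upgrade from the 2-dimensional local homeomorphism $f|_{V_0}$ to a 1-dimensional local homeomorphism between the level circles: this step is exactly where the no-singular-points assumption and the leaf-preserving identity $\tau_0\circ f=\tau_0-1$ conspire to force $f|_U$ to carry leaves homeomorphically onto leaves. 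Once that reduction is in hand, compactness of $S^1$ does the rest.
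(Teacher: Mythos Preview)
Your proof is correct and follows essentially the same approach as the paper's: both establish the easy inclusion via condition~\ref{enum:pseudobottcher1_tau0}\ref{enum:pseudobottcher1_tau0_ii}, then use the absence of singular points in $V_0$ to argue that the restriction of $f^n$ to a level circle is a local homeomorphism into the target level circle, forcing surjectivity by compactness and connectedness. The paper phrases the surjectivity step as a contradiction (a proper closed image in $\gamma_{t-n}$ would have a boundary point where $f^n$ ``bends'' an arc and fails to be a local homeomorphism), while you invoke the covering-map viewpoint and induct on $n$; these are cosmetic differences in the same argument.
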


\begin{proof}
  Consider the level curves $\gamma_t=\tau_0^{-1}(t)$ and $\gamma_{t-n}=\tau_0^{-1}(t-n)$.
  From the condition~\ref{enum:pseudobottcher1_tau0}-\ref{enum:pseudobottcher1_tau0_ii} of Definition~\ref{def:pseudobottcher_basin1},

  $f^n(\gamma_t) \subseteq \gamma_{t-n}$.
  $\gamma_t$ is closed set. Then $f^n(\gamma_t)$ also is closed set
  because $f^n$ is open closed map.

  If $f^n(\gamma_t)\not=\gamma_{t-n}$, then the image $f^n(\gamma_t)$ has a
  boundary in $\gamma_{t-n}$. Let a point $f^n(x)$ be on the boundary of
  $f^n(\gamma_t)$ in $\gamma_{t-n}$. Then in local coordinates $f^n$
  bends an open segment neighborhood of $x$ in $\gamma_t$ that looks like $(-\delta,\delta)$ to a
  half-open segment in $\gamma_{t-n}$ with $f^n(x)$ as its boundary that looks like $[f^n(x),\varepsilon)$.
  Then $f^n$ is not a local homeomorphism in $x$.
  But from the condition~\ref{enum:pseudobottcher1_3} of
  Definition~\ref{def:pseudobottcher_basin1}, $V_0\cap \Sing(f)=\emptyset$,
  $\Sing(f^n)=f^{-n}(\Sing(f))$ $\Rightarrow$ $V_0\cap \Sing(f^n)=\emptyset$.
  It means that $f^n$ is a local homeomorphism in $V_0$.
  We got a contradiction.

  Therefore, $f^n(\gamma_t)=\gamma_{t-n}$. As
  $\gamma_t=\tau_0^{-1}(t)$ and $\gamma_{t-n}=\tau_0^{-1}(t-n)$,
  we got the equation $f^n(\tau_0^{-1}(t))=\tau_0^{-1}(t-n)$.
\end{proof}

\begin{remark}\label{rem:pseudobottcher_basin1_v_condition}
  Since $M$ is compact, the condition~\ref{enum:pseudobottcher1_3} of
  Definition~\ref{def:pseudobottcher_basin1} is redundant because the number of singular points of $f$
is at most finite and we can always shrink $V_0$ to avoid them.
\end{remark}
\begin{remark}
From the condition~\ref{enum:pseudobottcher1_tau0}-\ref{enum:pseudobottcher1_tau0_ii} of Definition~\ref{def:pseudobottcher_basin1},
$\forall x\in V_0$
$\tau_0(f(x)) < \tau_0(x)$, which means that $\tau_0$
is a Lyapunov function for the restriction $f|_{V_0}$.
\end{remark}

\subsection{Expansion of $\tau_0$ onto the Pseudo-B\"{o}ttcher component.}

Let $M$ be a compact connected 2-dimensional manifold without boundary.
Let $f\colon M\to M$ be an inner mapping of $M$.
Let $S$ be a totally invariant Pseudo-B\"{o}ttcher component
of strict wandering of $f$ (Definition~\ref{def:pseudobottcher_basin1}).
\begin{lemma}\label{lm:pseudo_Butcher_timeline_coordinate}
  There exists a function $\tau\colon S\to (-\infty,+\infty)$ such that
  \begin{enumerate}[label={\rm \roman*)}, align=right, leftmargin=*]
  \item
    \label{emum:lm_timeline_i}
    $\forall x\in V_0$ $\tau(x) = \tau_0(x)$.
  \item
    \label{emum:lm_timeline_ii}
    $\tau(f^{n}(x))=\tau(x)-n$, $n\in\zz$, i.e.
    partitioning on level curves of $\tau$ is invariant under the
    actions of $f$ and $f^{-1}$.
  \item
    \label{emum:lm_timeline_iii}
    For any $c\in (-\infty,+\infty)$ the level curve
    $\tau^{-1}(c)$ is neutrally saturated.
    As a result, $\tau$ is constant on neutral sections.
  \end{enumerate}
\end{lemma}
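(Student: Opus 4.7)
The plan is to extend $\tau_0$ by pulling it back along forward iterates of $f$. Since $S\subset B_A=\bigcup_{k\ge 0} f^{-k}(U_0)$ and $U_0$ is strictly attracting, for each $x\in S$ there is an index $n_0=n_0(x)$ such that $f^n(x)\in U_0$ for all $n\ge n_0$. The defining condition $S=B_A\setminus\bigcup_{k\ge 0}f^{-k}(A)$ ensures that no $f^n(x)$ ever lands in $A$, so in fact $f^n(x)\in V_0$ for every $n\ge n_0$. I would therefore set
\[
\tau(x)\;:=\;\tau_0(f^n(x))+n,\qquad n\ge n_0(x).
\]

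The first thing to check is well\discretionary{-}{}{-}definedness. Property~\ref{enum:pseudobottcher1_tau0_ii} of Definition~\ref{def:pseudobottcher_basin1} gives $\tau_0(f(y))=\tau_0(y)-1$ whenever $y$ and $f(y)$ both lie in $V_0$, so incrementing the index from $n$ to $n+1$ does not change the right\discretionary{-}{}{-}hand side. Property~\ref{emum:lm_timeline_i} is then immediate: for $x\in V_0$ the choice $n=0$ is admissible and yields $\tau(x)=\tau_0(x)$. For property~\ref{emum:lm_timeline_ii}, the forward case $n\ge 0$ follows by picking $k$ large enough that $f^{k+n}(x)\in V_0$ and reading off
\[
\tau(f^n(x))=\tau_0(f^{k+n}(x))+k=\bigl(\tau_0(f^{k+n}(x))+k+n\bigr)-n=\tau(x)-n.
\]
The backward case $n<0$ reduces to the forward one: for any $y$ with $f^{|n|}(y)=x$ (such $y$ lie in $S$ because $S$ is totally invariant) one has $\tau(x)=\tau(f^{|n|}(y))=\tau(y)-|n|$, hence $\tau(y)=\tau(x)+|n|$.

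Property~\ref{emum:lm_timeline_iii} then follows from~\ref{emum:lm_timeline_ii} alone: if $y\in O^\perp(x)$, there exists $n\ge 0$ with $f^n(y)=f^n(x)$, and two applications of~\ref{emum:lm_timeline_ii} give $\tau(y)-n=\tau(f^n(y))=\tau(f^n(x))=\tau(x)-n$, so $\tau(y)=\tau(x)$ and the level sets are neutrally saturated. I expect no real obstacle in this argument; the only point demanding care is the bookkeeping around the choice of $n_0(x)$, together with the coherent use of the shift relation on $V_0$ to conclude the definition is independent of that choice. If continuity of $\tau$ is needed later, it follows from the continuity of $\tau_0$ and of the iterates $f^n$, since on a small enough neighborhood of any $x\in S$ a single common $n\ge n_0(x)$ can be used.
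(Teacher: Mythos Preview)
Your argument is correct and is in fact cleaner than the paper's. Both proofs rest on the same idea---use the shift relation $\tau_0(f(y))=\tau_0(y)-1$ on $V_0$ to propagate $\tau_0$ to all of $S$---but you go straight to the point: since every $x\in S$ has $f^n(x)\in V_0$ for all large $n$, the formula $\tau(x)=\tau_0(f^n(x))+n$ defines $\tau$ immediately on the whole of $S$. The paper instead proceeds in four stages: first extend to the neutral saturation $V_0^{\perp}$ using condition~\ref{enum:pseudobottcher1_tau0_iii}, then to its closure by continuity, and only then invoke the general formula $\tau\bigl(f^{-m}(f^n(x))\bigr)=\tau_0(x)+m-n$. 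Your version is the special case $n=0$ of that formula, which already suffices; the intermediate passage through $V_0^{\perp}$ and its closure is unnecessary once one observes that forward orbits eventually enter $V_0$. Your derivation of~\ref{emum:lm_timeline_iii} directly from~\ref{emum:lm_timeline_ii} is also tidier than the paper's separate treatment, and your remark on continuity (using a common $n$ on a small neighborhood) is exactly right.
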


\begin{proof}
  There is already a function $\tau_0(x)$ on $V_0\subset S$
  from Definition~\ref{def:pseudobottcher_basin1}.
  Let's expand $\tau$ from $V_0$ onto $S$ in 4 steps.

  1) To satisfy the condition~\ref{emum:lm_timeline_i},
  put $\tau(x)=\tau_0(x)$ for all $x\in V_0$.

  2) Consider the set $V_0^\perp=\bigcup_{n\geq 0} f^{-n}(f^n(V_0))$.
  By Definition~\ref{def:pseudobottcher_basin1} $V_0$ does not
  contain singular points of $f$.
  Therefore, for all $n\geq 0$ the map $f^n$
  restricted to $V_0$ is a regular covering, $V_0$ is a distinct
  sheet of the $n$-sheeted covering $f^n$ restricted to the set $f^{-n}(f^n(V_0))$
  and the set $V_0$ is distinguished as a connected component in the set $f^{-n}(f^n(V_0))$.
  As a result, $V_0$ is connected component of $V_0^\perp$.
  Note that $V_0$ and $V_0^\perp$ either differ or coincide.

  Put for all $x\in V_0^\perp$ $\tau(x)=\tau_0(O^\perp(x)\cap V_0)$.
  This formula is well defined because $\tau_0$
  is constant on neutral sections in $V_0$
  (Definition~\ref{def:pseudobottcher_basin1} ~\ref{enum:pseudobottcher1_tau0}-\ref{enum:pseudobottcher1_tau0_iii}).
  Note that in this step the condition~\ref{enum:pseudobottcher1_tau0}-\ref{enum:pseudobottcher1_tau0_ii}
  of Defintiton~\ref{def:pseudobottcher_basin1}
  turns into the condition~\ref{emum:lm_timeline_ii} of this Lemma.

  3) By continuity, put $\tau(x)=0$ for all $x\in \overline{V_0^\perp}\setminus V_0^\perp$.
  On this step the condition ii) $\tau(f^{n}(x))=\tau(x)-n$ still hold.

  4) Define $\tau$ onto $S$:
  if $f^{-m}\left(f^n\left(x\right)\right)\in \overline{V_0^\perp}$,
  $m,n\in \zz^+$ then
  \begin{equation}\label{eq:tau_S}
    \tau\left(f^{-m}\left(f^n\left(x\right)\right)\right)=\tau_0(x)+m-n.
  \end{equation}
  $S$ does not have periodic points of $f$
  (if the basin of attraction $B$ contains a periodic
  point $p$ of $f$ then $p$ belongs to the attractor $A$ by construction).
  Therefore, $\forall x\in S$ $O(x)/O^\perp(x)\approx \zz$.
  From the formula (\ref{eq:tau_S}), it follows that
  $\forall x\in S$ the function $\tau$ has the same value on $O^\perp(x)$,
  i.e. the function $\tau$ is constant in $S$ on neutral sections. 
  We can consider the function $\tau$ as a function defined on
  equivalence classes of the partitioning of $S$ by $O^\perp$.
  By definition, the function $\tau$ is coordinated with the action of
  $f$ on $S$. Using commutative diagrams, we can write it as\\
  \xymatrix{
    O(x) \ar[d]^{[O/O^\perp]} \ar[r]^{f} & O(x) \ar[d]^{[O/O^\perp]} &
    S/O^\perp \ar[d]^\tau \ar[r]^{f} & S/O^\perp \ar[d]^\tau &
    S \ar[d]^\tau \ar[r]^{f} & S \ar[d]^\tau
    \\
    \zz \ar[r]^{-1} & \zz &
    \rr \ar[r]^{-1} & \rr &
    \rr \ar[r]^{-1} & \rr
  }

  It follows that for any point $x$ for any
  $m_1,n_1,m_2,n_2\in \zz^+$ such that
  both $f^{-m_1}\left(f^{n_1}\left(x\right)\right)\in \overline{V_0^\perp}$,
  and
  $f^{-m_2}\left(f^{n_2}\left(x\right)\right)\in \overline{V_0^\perp}$,
  $\tau$ has the same value in $x$.
  Thus, $\tau$ is well defined in $x$.
  The function $\tau$ is defined in (\ref{eq:tau_S}) as the composition
  of continuous functions, thus for any $x\in S$ $\tau$ is continuous
  in $x$.
  The condition~\ref{emum:lm_timeline_ii} follows from the formula (\ref{eq:tau_S}).
  The function $\tau$ is well defined in $S$ with the
  properties~\ref{emum:lm_timeline_i}-~\ref{emum:lm_timeline_iii}.
  This completes the proof.
\end{proof}

\begin{corollary}\label{cor:level_curve_regular_covering_S}
  $f^n(\tau^{-1}(t))=\tau^{-1}(t-n)$.
\end{corollary}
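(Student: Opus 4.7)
The plan is to verify the set equality by establishing both inclusions separately; once Lemma~\ref{lm:pseudo_Butcher_timeline_coordinate} has been proved, nothing essentially new is required. The forward inclusion $f^n(\tau^{-1}(t))\subseteq \tau^{-1}(t-n)$ is a direct rewriting of property~\ref{emum:lm_timeline_ii}: for any $x\in\tau^{-1}(t)$, one has $\tau(f^n(x))=\tau(x)-n=t-n$, and hence $f^n(x)\in\tau^{-1}(t-n)$.

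For the reverse inclusion $\tau^{-1}(t-n)\subseteq f^n(\tau^{-1}(t))$, I would pick an arbitrary $y\in\tau^{-1}(t-n)\subseteq S$ and argue that it has a preimage under $f^n$ lying in $\tau^{-1}(t)$. Surjectivity of $f$ (part of Definition~\ref{def:inner_mapping}) gives some $x\in M$ with $f^n(x)=y$, and total invariance of the Pseudo-B\"ottcher component (built into the setting before Definition~\ref{def:pseudobottcher_basin1}, so that $f^{-n}(S)=S$) forces $x\in S$. Because $\tau$ is defined on all of $S$, property~\ref{emum:lm_timeline_ii} applies and yields $\tau(x)=\tau(f^n(x))+n=(t-n)+n=t$, so $x\in\tau^{-1}(t)$ and $y=f^n(x)\in f^n(\tau^{-1}(t))$.

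There is no genuine obstacle here; the substantive work was done in Lemma~\ref{lm:pseudo_Butcher_timeline_coordinate} when $\tau$ was extended from $V_0$ to $S$ with the $f$-equivariance $\tau\circ f=\tau-1$. The only point one has to be careful about is that the preimage $x$ actually lies in the domain of $\tau$, and this is precisely what total invariance of $S$ guarantees. Note also that, in contrast to Lemma~\ref{lm:level_curve_regular_covering}, no local-homeomorphism argument is needed: here the claim is just equality of full $\tau$-level sets on $S$, so the possible presence of singular points of $f$ in $S$ (but outside $V_0$) is harmless.
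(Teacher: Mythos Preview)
Your proof is correct and is exactly the kind of argument the paper has in mind: the corollary is stated without proof immediately after Lemma~\ref{lm:pseudo_Butcher_timeline_coordinate}, so the intended justification is precisely the double-inclusion argument you give, using property~\ref{emum:lm_timeline_ii} together with surjectivity of $f$ and total invariance of $S$.
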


\section{Base annulus and partitioning}
Let $f\colon M\to M$ be an inner mapping of compact two-dimensional manifold $M$
and $S\subset M$ be a totally invariant
Pseudo-B\"{o}ttcher component of strict wandering.

This section lays the groundwork for understanding the topological
structure of $S$ by establishing a partitioning of $S$ into
multiple connected compact surfaces with boundary, called atoms.

This partitioning is analogous to the
Birkhoff's concept of ``fundamental neighborhoods'' \cite{BirkgoffDS},
used in studying homeomorphisms.
However, ``fundamental neighborhoods'' do not generalize easily to
arbitrary non\discretionary{-}{}{-}invertible maps. In our
construction, we rely heavily on the properties of level curves of the function
$\tau_0$ from Definition~\ref{def:pseudobottcher_basin1}.

We start from studying topology of single atoms. Then we focus on atom
groups, general ``molecules'' and special molecules called ``chains'',
which are used to study the action of $f$ on $S$.
Later, we consider how this partitioning is connected with the topology of $S$ as a whole.

These results will be instrumental in our analysis of $S$'s topology in Section~\ref{sec:pseudo_bottcher_topology}.

\subsection{Partitioning of $V_0$.}\label{sec:PartitioningV0}

\begin{figure}[htpb]
    \includegraphics[scale=0.4]{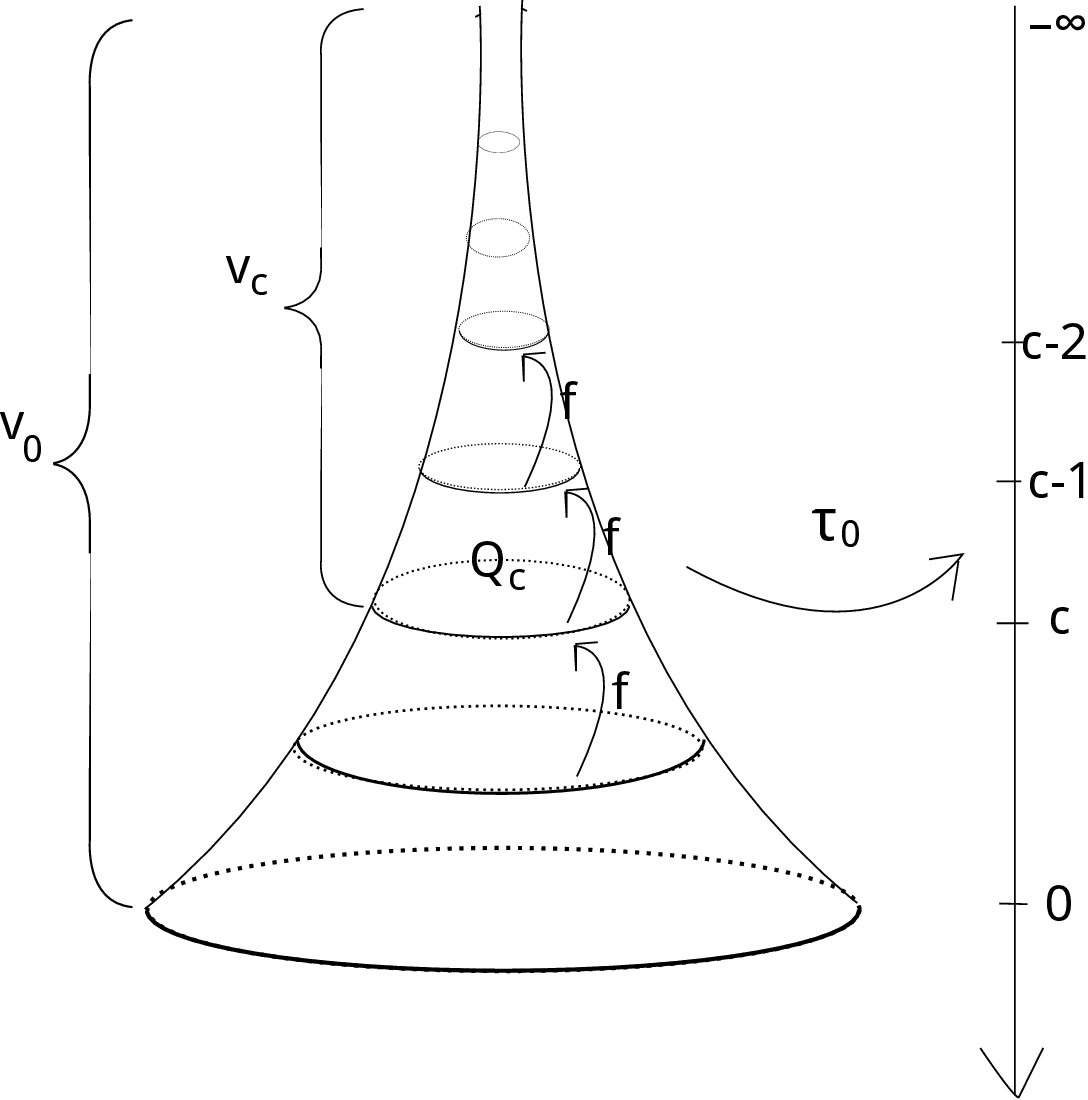}
\caption{
$V_0$, $V_c$ and $Q_c$.
}
\end{figure}

Consider the neighborhood $V_0$ and the function $\tau_0\colon V_0\to \rr$ from Definition~\ref{def:pseudobottcher_basin1}.

Define
$V_c:=\tau_0^{-1}\left(\left(-\infty,c \right)\right)$ 
and $Q_c:=\tau_0^{-1}\left(\left[c-1,c\right]\right)$ for any $c\in (-\infty,0)$.
$Q_c\subset \overline{V_c}$, $V_c\subset V_0$.
By Definition~\ref{def:pseudobottcher_basin1}, $V_c$ and $Q_c$ are
homeomorphic to the open and closed annulus respectively.

Choose the orientation of $V_c$ towards the attractor $A$.
As a closed topological annulus, $Q_c$ is closed connected set.
Its boundary $\partial Q_c$ has 2 connected components:
$\partial^+ Q_c=\tau_0^{-1}(c)$ and $\partial^- Q_c=\tau_0^{-1}(c-1)$.
Each $\partial^+ Q_c$ and $\partial^- Q_c$ is homeomorphic to a circle.
Let's call $\partial^+ Q_c$ and $\partial^- Q_c$ \emph{external} and \emph{internal} part of $\partial Q_c$.

$V_c$ is naturally partitioned with the images of $Q_{c}$ (Lemma~\ref{lm:level_curve_regular_covering}):
$$V_c=\cup_{n\in\zz^+}Q_{c-n}=\cup_{n\in\zz^+}f^n(Q_{c}).$$
Note that
not only $V_0$ with function $\tau_0$, but for any $c\in (-\infty,0)$
$V_c$ with function $\tau_c=\tau_0+c$ also
satisfy the conditions of Definition~\ref{def:pseudobottcher_basin1}.
In particular, by construction, $V_c$ is restriction of a strictly attracting
neighborhood of the attractor $A$ on $S$.
Then $\forall x \in S$ $\exists m,n\in Z^+$:
$f^n(x)\in Q_{c-m}\subset V_c$, $Q_{c-m}=f^m(Q_{c})$ $\Rightarrow$
$\forall x\in S$ $O(x)\cap Q_{c}\not=\emptyset$.
Therefore,
\begin{equation}\label{f:S_fundII_lmpart}
  S
  = \cup_{l,m\ge 0} f^{-l}\left(Q_{c-m}\right)%
  = \cup_{l,m\ge 0} f^{-l}\left(f^m\left(Q_c\right)\right)%
.
\end{equation}

Conceptually, partitioning of $V_0$ is very easy to understand.
We partition $\rr$ into segments
$[c-1-n,c-n]_{n\in\zz}$ of length 1. Their preimages with $\tau_0^{-1}$ become
partitioning of $V_0$ into annuli.
The only complexity there is using a constant $c$. $c$ is required to
offset the boundaries of the annuli from the trajectories of singular points.

\begin{lemma}
For each point $p\in \Sing(f)$, such that $O(p)\cap V_o\not=\emptyset$,
the function $\tau_0\colon V_0\to \rr$ maps the set
$O(p)\cap V_o$ to a discrete set in $\rr$.
\end{lemma}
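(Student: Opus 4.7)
The plan is to show that all values of $\tau_0$ on $O(p)\cap V_0$ lie in a single arithmetic progression $\tau^*+\zz\subset\rr$, which is manifestly discrete. Two ingredients combine to force this: the forward invariance of $V_0$ under $f$, and the functional equation $\tau_0(f(x))=\tau_0(x)-1$ from condition~\ref{enum:pseudobottcher1_tau0}-\ref{enum:pseudobottcher1_tau0_ii} of Definition~\ref{def:pseudobottcher_basin1}.

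First I would establish that $f(V_0)\subset V_0$. This follows immediately from Lemma~\ref{lm:level_curve_regular_covering}: $f(\tau_0^{-1}(t))=\tau_0^{-1}(t-1)$, and $t\in(-\infty,0)$ implies $t-1\in(-\infty,0)$, so $f(V_0)=\cup_{t<0}f(\tau_0^{-1}(t))=\cup_{t<0}\tau_0^{-1}(t-1)\subset V_0$. In particular $p\notin V_0$ (since $V_0$ contains no singular points), and once any forward iterate of $p$ enters $V_0$, it stays there. Let $m_0\ge 0$ be the minimal integer with $f^{m_0}(p)\in V_0$; such $m_0$ exists because $O(p)\cap V_0\neq\emptyset$ and forward invariance prevents preimages of a point outside $V_0$ from lying in $V_0$ (see the next paragraph). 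Set $q_0:=f^{m_0}(p)$ and $\tau^*:=\tau_0(q_0)$. Iterating the functional equation along $q_0,f(q_0),f^2(q_0),\ldots\in V_0$ gives $\tau_0(f^m(p))=\tau^*-(m-m_0)$ for all $m\ge m_0$.

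Next I would decompose $O(p)=\cup_{m\ge 0}O^-(f^m(p))$ and analyze each piece. For $m<m_0$, the point $y:=f^m(p)$ lies outside $V_0$; if some $z\in V_0$ satisfied $f^k(z)=y$, forward invariance of $V_0$ would give $y=f^k(z)\in V_0$, a contradiction. Hence $O^-(f^m(p))\cap V_0=\emptyset$ for $m<m_0$ and the union reduces to $O(p)\cap V_0=\cup_{m\ge m_0}O^-(f^m(p))\cap V_0$. For $m\ge m_0$ and $z\in O^-(f^m(p))\cap V_0$ with $f^k(z)=f^m(p)$, applying condition~\ref{enum:pseudobottcher1_tau0}-\ref{enum:pseudobottcher1_tau0_ii} $k$ times yields $\tau_0(z)=\tau_0(f^m(p))+k=\tau^*+(k+m_0-m)$.

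Since $k+m_0-m$ is an integer, every value in $\tau_0(O(p)\cap V_0)$ lies in $\tau^*+\zz$. This is a discrete subset of $\rr$, and a subset of a discrete set is discrete, which is exactly the conclusion. No step looks delicate: the only place requiring care is the observation that preimages of points outside $V_0$ cannot sneak into $V_0$, but this drops out of forward invariance in one line. The whole argument really only uses the first two properties of $\tau_0$ in Definition~\ref{def:pseudobottcher_basin1} together with Lemma~\ref{lm:level_curve_regular_covering}; neither the $S^1$-fiber structure nor constancy on neutral sections is needed here.
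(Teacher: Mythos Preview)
Your proof is correct and follows essentially the same idea as the paper's: show that $\tau_0(O(p)\cap V_0)$ lies in a single coset $\tau^*+\zz$, hence is discrete. The paper's argument is terser---it simply asserts $\tau_0(O(p)\cap V_0)=\{\tau_0(f^m(p))-n\mid n\in\nn\}$ by invoking conditions~\ref{enum:pseudobottcher1_tau0}-\ref{enum:pseudobottcher1_tau0_ii} and~\ref{enum:pseudobottcher1_tau0}-\ref{enum:pseudobottcher1_tau0_iii}---while you spell out the forward invariance $f(V_0)\subset V_0$ explicitly and observe that condition~\ref{enum:pseudobottcher1_tau0}-\ref{enum:pseudobottcher1_tau0_iii} is not actually needed: the functional equation~\ref{enum:pseudobottcher1_tau0}-\ref{enum:pseudobottcher1_tau0_ii} alone forces every value into $\tau^*+\zz$. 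That is a nice sharpening; your containment $\tau_0(O(p)\cap V_0)\subset\tau^*+\zz$ is weaker than the paper's claimed equality, but it is exactly what the lemma requires, and it avoids any question of whether points $z\in V_0$ with $\tau_0(z)>\tau^*$ can occur in $O(p)$.
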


\begin{proof}
If $p\in \Sing(f)$, then $p\not\in V_0$.
(Defintiton~\ref{def:pseudobottcher_basin1}-\ref{enum:pseudobottcher1_3}).
If $O(p)\cap V_o\not=\emptyset$, then there exists the smallest $m>0$
such that $f^m(p)\in V_o$.
From the conditions~\ref{enum:pseudobottcher1_tau0}-\ref{enum:pseudobottcher1_tau0_ii}
and~\ref{enum:pseudobottcher1_tau0}-\ref{enum:pseudobottcher1_tau0_iii}
of Defintiton~\ref{def:pseudobottcher_basin1},
$\tau_0(O(p)\cap V_o)=\{\tau_0(f^m(p))-n | n\in \nn\}$.
\end{proof}

Let $O\left(\Sing\left(f\right)\right):=\cup_{p\in \Sing(f)} O(p)$
be the full trajectory of the set of singular points.
The set $\Sing(f)$ is finite.
Then
\begin{corollary}
  The function $\tau_0\colon V_0\to \rr$ maps the set
  $O\left(\Sing\left(f\right)\right)\cap V_o$ to a discrete set in $\rr$.
\end{corollary}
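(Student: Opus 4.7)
My plan is to derive the corollary directly from the preceding lemma together with two elementary facts: a finite union of discrete subsets of $\rr$ is discrete, and $\Sing(f)$ is finite by Lemma~\ref{lm:collection_inner_mapping_properties}(3).

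First I would rewrite the image as a finite union. Since taking images commutes with unions,
\begin{equation*}
\tau_0\bigl(O(\Sing(f))\cap V_0\bigr) = \bigcup_{p\in \Sing(f)} \tau_0\bigl(O(p)\cap V_0\bigr).
\end{equation*}
By the preceding lemma, for every $p\in\Sing(f)$ with $O(p)\cap V_0\neq\emptyset$ the set $\tau_0(O(p)\cap V_0)$ has the explicit form $\{\tau_0(f^{m(p)}(p)) - n : n\in\nn\}$ for some smallest exponent $m(p)\ge 0$. Such a set is a strictly decreasing sequence diverging to $-\infty$, hence a discrete subset of $\rr$. For singular points $p$ with $O(p)\cap V_0=\emptyset$ the corresponding term is empty and contributes nothing.

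Finally, I would argue that the finite union of these discrete sets is itself discrete in $\rr$. For any bounded closed interval $[a,b]\subset \rr$ each set $\{\tau_0(f^{m(p)}(p))-n : n\in\nn\}$ meets $[a,b]$ in only finitely many points, and since by Lemma~\ref{lm:collection_inner_mapping_properties}(3) the index set $\Sing(f)$ is finite, the whole union meets $[a,b]$ in finitely many points. Therefore the union has no accumulation point in $\rr$, i.e. is discrete, which is the desired conclusion.

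There is no serious obstacle here; the only mild care is remembering to discard singular points whose full trajectory misses $V_0$ (they contribute empty sets) and invoking finiteness of $\Sing(f)$ to ensure the union remains discrete — an infinite union of discrete sequences accumulating at disjoint points would fail this, so finiteness of $\Sing(f)$ is essential.
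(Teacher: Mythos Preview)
Your proposal is correct and follows exactly the paper's approach: the paper simply notes that $\Sing(f)$ is finite and immediately infers the corollary from the preceding lemma, which is precisely what you do (with more detail supplied for the finite-union step). The only tiny slip is writing $m(p)\ge 0$ where the lemma gives $m(p)>0$ (since $p\notin V_0$), but this does not affect the argument.
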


\subsection{Introducing atoms.}\label{sec:atoms}

Let's select a constant $c$ for the set $Q_c$
such that $c\not \in \tau_0\left(O\left(\Sing\left(f\right)\right)\right)$.
In that case, the boundary $\partial Q_c$ has no intersection with
trajectories of singular points.

Let's call the selected $Q_c$ a \emph{base annulus} and
build a partitioning of $S$ based on $Q_c$.

\begin{definition}\label{def:atom_comp_fundIIloc}
  Connected components of the sets
  $f^{-l}(f^m(Q_c))$, where $l,m\ge 0$, are called \emph{atoms}.
\end{definition}

Note that $f^{-l}(f^m(Q_c))\subset f^{-(l+1)}(f^{m+1}(Q_c))$.
Thus, for any given atom $A$
the numbers $l,m\ge 0$ are defined up to the difference $m-l$.

\begin{definition}\label{def:atom_comp_generation}
  For an atom $A\subset f^{-l}(f^m(Q_c))$ the number $m-l$ is called
  $A$'s \emph{generation}.
\end{definition}

Atoms defined by numbers $l,m$
do not change when defined by numbers $l+k,m+k$.
One way to prove it is to consider that the multivalue maps
$f^{-n}\circ f^{n}$ are the identity maps locally, as
they might introduce new preimages under
$f^{-n}$, but they do not modify the existing preimages
when $n$ increases.
Another way is to define atoms is to use the function $\tau$ from
Lemma~\ref{lm:pseudo_Butcher_timeline_coordinate}:

\begin{definition}\label{def:atom_comp_tau}
  Connected components of the sets
  $\tau^{-1}([c+n-1,c+n])$, $n\in \zz$ are called \emph{atoms}
  (of \emph{generation} $n$).
\end{definition}

From the condition~\ref{emum:lm_timeline_ii} of
Lemma~\ref{lm:pseudo_Butcher_timeline_coordinate},
Definitions~\ref{def:atom_comp_fundIIloc} and \ref{def:atom_comp_tau}
are equivalent: any atom $A$ which is a connected component of
$f^{-l}(f^m(Q_c))$ is a connected component of
$\tau^{-1}([c-n-1,c-n])$, where $n=m-l$ is the generation of the atom $A$.

There exist a lot of different partitionings of $S$ that depend on
\begin{enumerate}
  \item
the choice of the neighborhood $V$ and the function $\tau_0$ in the
definition~\ref{def:pseudobottcher_basin1} of Pseudo-B\"{o}ttcher component.
  \item
the choice of $c$ in $Q_c$, that defines the base annulus.
\end{enumerate}
Moreover, those partitionings might be different topologically:
for example, if $S$ has 2 singular points sufficiently close,
then there could exist one partitioning where those 2 singular points
belong to a single atom, and another partitioning,
where those 2 singular points belong to two different atoms.
However, partitionings are just a tool to study $S$.
In the end, the results will not depend on the choice of partitioning.

From now on, we fix the previously selected constant $c$, the base annulus $Q_c$
and the corresponding partitioning of $S$ so that any atom we speak
of can only belong to the partitioning based on $Q_c$. 

\begin{definition}\label{def:ext_int_boundary_atom}
  For an atom $A$ define the \emph%
  {internal} and \emph%
  {external} parts of the boundary $\partial A$ of atom $A$ respectively
  as $\partial^- A=\tau^{-1}(c-n-1)\cap A$ and $\partial^+ A=\tau^{-1}(c-n)\cap A$.
\end{definition}

\begin{definition}\label{def:adjacent_atom}
  Two atoms $A_1$ and $A_2$
  are called \emph%
  {adjacent}, if they have
  a non-empty intersection of their boundaries.
\end{definition}
By construction, a common part of the
boundary of adjacent atoms is a part of external boundary for one atom and
a part of internal boundary for another.
This intersection of boundaries induces a natural equivalence relation:
A point $x$ on the external part of the boundary of one atom is
equivalent to a point $y$ on the internal part of the boundary of the
other atom if $x$ and $y$ are mapped into the same point by natural embedding of
their atoms into $S$.

\begin{definition}\label{def:atom_gluing}
Let's call this natural equivalence a \emph{gluing} of atoms.
\end{definition}

\subsection{Atom's topology and the Riemann-Hurvitz formula.}
Recall the Riemann-Hurvitz formula (see.~\cite{Hovanski2007en}).
Let $M_1$, $M_2$ be surfaces and $h\colon M_1\to M_2$ be a branched covering.
Then
\begin{equation}\label{eq:RiemannHurvitz}
  \chi (M_1)=|\deg(h)|\cdot \chi (M_2)-\sum _{{p\in \Sing(h)}}(e_{p}-1)
\end{equation}
where $\chi (M)$ is the Euler characteristic of a surface $M$.

Consider an atom $A$. By atom's definition,
$A=f^{-l}(f^m(Q_c))$ for a $l,m\in Z^+$.
Then $f^l(A)=f^m(Q_c)=Q_{c-m}$.
Therefore,
$A$ is a branched covering of the closed annulus $Q_{c-m}$ by $f^l$.

$\Sing(f^l)=f^{-l}\left(\Sing\left(f\right)\right)\subset O\left(\Sing\left(f\right)\right)$.
By the choice of $c$,
$c\not \in \tau_0\left(O\left(\Sing\left(f\right)\right)\right)$.
Then $\partial A\cap \Sing(f^l)=\emptyset$ and
both $\partial^+ A$ and $\partial^- A$
have no intersection with the trajectories of singular points of $f$ and $f^l$.
Therefore, they are
regular coverings (by $f^l$)
of $\partial^+ Q_{c-m}$ and $\partial^- Q_{c-m}$ respectively.

Consider the map $f^l|_A\colon A\to Q_{c-m}$.
For the annulus $Q_{c-m}$ the Euler characteristic
$\chi (Q_{c-m})$ is 0.
Using the Riemann-Hurvitz formula~\ref{eq:RiemannHurvitz},
we obtain
the following formula and Lemma~\ref{lm:atom_planar}
as the consequence:

\begin{equation}\label{eq:RiemannHurvitz_atom}
  \chi (A)=-\sum _{{p\in A\cap \Sing(f^l)}}(e_{p}-1)
\end{equation}

\begin{lemma}\label{lm:atom_planar}
If $A\cap \Sing(f^l)=\emptyset$ then $A$ is regular covering of
$Q_{c-m}$, $\chi (A)=0$,
and $A$ itself is also homeomorphic to a closed annulus.

Otherwise, $\chi (A) <0$,
$A$ is a planar set (a disc with holes), a
two-dimensional manifold with boundary. Each connected component
of $\partial A$ is homeomorphic to a circle, and $\partial A$ has at
least 3 or more connected components (the exact number depends on
the set $\Sing(f^l)\cap A$).
\end{lemma}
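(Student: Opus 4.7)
The plan is to handle the two cases of the lemma separately, in each case applying the Riemann--Hurwitz formula (\ref{eq:RiemannHurvitz_atom}) together with the boundary structure induced by the choice of $c$.

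\textbf{Case 1: $A \cap \Sing(f^l) = \emptyset$.} The sum in (\ref{eq:RiemannHurvitz_atom}) is empty, so $\chi(A) = 0$ at once. Absence of branch points makes $f^l|_A$ a local homeomorphism, and since $A$ is a compact connected component of the preimage of the connected set $Q_{c-m}$ under the proper closed map $f^l$, this restriction is a surjective finite unbranched covering. Its restriction to $\partial A$ is a finite unbranched cover of the two disjoint circles $\partial^+ Q_{c-m} \sqcup \partial^- Q_{c-m}$, so each component of $\partial A$ is a circle and at least two components appear (one for each $\partial^\pm$). The combination $\chi(A) = 0$ with $\partial A$ having at least two components forces, by the classification of compact surfaces, that $A$ is a closed annulus; in particular, the Möbius band is ruled out since it has only one boundary component.

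\textbf{Case 2: $A \cap \Sing(f^l) \neq \emptyset$.} By Stoilov's Lemma~\ref{lm:stoilov_lemma} applied to $f^l$, the local degree at each singular point is at least $2$, so every term $e_p - 1$ in (\ref{eq:RiemannHurvitz_atom}) is at least $1$. Hence $\chi(A) \leq -1 < 0$. As a connected component of the preimage of the compact $2$-manifold-with-boundary $Q_{c-m}$ under a proper branched covering, $A$ is itself a compact connected $2$-manifold with boundary. Since $c$ was chosen to lie outside $\tau_0(O(\Sing(f)))$, the boundary $\partial A$ contains no singular points of $f^l$, so $f^l|_{\partial A}$ is an unbranched finite cover of $\partial Q_{c-m}$; each component of $\partial A$ is therefore a connected finite cover of $S^1$, i.e., a circle, and both of $\partial^\pm Q_{c-m}$ must be covered, giving at least two components.

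To upgrade ``at least two'' to ``at least three'' I would establish planarity of $A$ and then use $\chi(A) = 2 - b$ with $\chi(A) \leq -1$. Planarity would be shown by cutting $Q_{c-m}$ along a simple arc $\alpha$ from $\partial^- Q_{c-m}$ to $\partial^+ Q_{c-m}$ chosen to miss the finite critical-value set of $f^l|_A$. Then $Q_{c-m}$ becomes a closed disk $D$, and the lift $(f^l)^{-1}(\alpha) \cap A$ is a proper $1$-submanifold of $A$ whose components are all arcs from $\partial^- A$ to $\partial^+ A$ (closed lifts are excluded because $\alpha$ joins distinct boundary components). Cutting $A$ along these arcs produces components, each a connected compact branched cover of the disk $D$, and hence itself a disk (a connected branched cover of a simply connected surface is simply connected). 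The surface $A$ is thus a gluing of finitely many disks along arcs of their boundaries, whose combinatorial pattern is controlled by the cyclic structure of the annulus $Q_{c-m}$, forcing planarity.

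\textbf{The main obstacle} is precisely this planarity step, because branched coverings of a planar surface are not planar in general (for example, a $2$-fold cover of a disk ramified at three interior points can be a once-punctured torus). The argument must exploit that the target is specifically an annulus, not an arbitrary planar surface, so that a single transverse arc suffices to unfold both $Q_{c-m}$ and $A$ into disk pieces whose reassembly along the lifted arcs cannot create handles. Making the reassembly precise---tracking how the lifts of $\alpha$ attach the disk pieces and verifying that each gluing occurs along a single arc on each disk boundary---is where the delicacy lies.
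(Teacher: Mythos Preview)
Your Case~1 and, in Case~2, the derivation of $\chi(A)<0$ together with the description of $\partial A$ as a finite union of circles covering both $\partial^{\pm}Q_{c-m}$, are correct and match what the paper extracts directly from formula~(\ref{eq:RiemannHurvitz_atom}). The paper in fact offers nothing further at this point: it simply declares Lemma~\ref{lm:atom_planar} to be a consequence of that formula.

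The genuine gap is the planarity step, and your own obstacle paragraph already contains the counterexample to your key claim. The parenthetical ``a connected branched cover of a simply connected surface is simply connected'' is false, as your degree-$2$ cover of a disk ramified at three interior points (a genus-$1$ surface with one boundary circle) shows. Cutting $Q_{c-m}$ along a single transverse arc $\alpha$ produces a disk $D$ that still contains all the critical values in its interior, so the pieces of $A$ obtained after cutting along the lifted arcs are branched covers of $D$ with the branch points still present, and nothing prevents them from having positive genus. The base being an annulus does not help: that same genus-$1$ double cover of a disk becomes, upon regluing the cut, a branched double cover of an annulus.

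What actually secures planarity in the paper is not local but dynamical, and appears only later as Lemma~\ref{lm:finite_molecule_planar}: if an atom carried positive genus, then its iterates under $f$ would sit disjointly in $S\subset M$ (they lie in disjoint $\tau$-bands) and would each carry positive genus as well, forcing the compact surface $M$ to have infinite genus. With planarity in hand, $\chi(A)=2-b\le -1$ gives $b\ge 3$ at once. Your cut-and-paste idea could in principle be pushed further---cutting along enough transverse arcs so that each rectangle holds at most one critical value does make every preimage piece a disk---but you would then still owe an argument that the specific regluing pattern cannot create a handle, and since any compact surface can be cut into disks, that step carries the entire content and is not supplied in your proposal.
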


Consider the function $\tau$ from
Lemma~\ref{lm:pseudo_Butcher_timeline_coordinate}, restricted on $A$.

\begin{lemma}\label{lm:atom_trivial_fiber_bundle}
  If $A\cap \Sing(f^l)=\emptyset$, then $\tau|_A: A\to ([c+l-m-1,c+l-m])$
  is a trivial fiber bundle
  with fiber $S^1$.
\end{lemma}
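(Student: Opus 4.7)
The plan is to factor $\tau|_A$ through the covering map $f^l|_A$ and exploit the already known trivial bundle structure on the base annulus $Q_{c-m}$.

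First I would invoke Lemma~\ref{lm:atom_planar} to conclude that, under the hypothesis $A\cap \Sing(f^l)=\emptyset$, the map $f^l|_A \colon A\to Q_{c-m}$ is an unbranched (regular, in the covering-space sense) covering onto the closed annulus $Q_{c-m}$, and that $A$ itself is a closed annulus with boundary components $\partial^-A$ and $\partial^+A$. Next I would note that conditions~\ref{emum:lm_timeline_i} and~\ref{emum:lm_timeline_ii} of Lemma~\ref{lm:pseudo_Butcher_timeline_coordinate} give, for every $x\in A$, the factorization $\tau(x)=\tau_0(f^l(x))+l$, because $f^l(x)\in Q_{c-m}\subset V_0$ where $\tau$ coincides with $\tau_0$. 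Hence $\tau|_A$ is the composition $A \xrightarrow{\,f^l|_A\,} Q_{c-m} \xrightarrow{\,\tau_0+l\,} [c+l-m-1,c+l-m]$, the second arrow being a trivial $S^1$-bundle by Definition~\ref{def:pseudobottcher_basin1}\ref{enum:pseudobottcher1_tau0}\ref{enum:pseudobottcher1_tau0_i}.

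The second step is to identify each fiber $\tau|_A^{-1}(t) = (f^l|_A)^{-1}(\tau_0^{-1}(t-l))$ as a single circle. A priori this preimage is only a disjoint union of circles, since it is the preimage of a circle under a finite unbranched covering. To force a single component I would argue as follows: the composition of an unbranched covering with a trivial $S^1$-bundle is itself locally trivial (lift a local trivialization of $\tau_0$ sheet-by-sheet through $f^l|_A$), so the number of connected components of the fiber of $\tau|_A$ is locally constant on the interval $[c+l-m-1,c+l-m]$, and, the interval being connected, globally constant. The fibers over the two endpoints are precisely $\partial^-A$ and $\partial^+A$ by Definition~\ref{def:ext_int_boundary_atom}, and since $A$ is a closed annulus each of these boundary components is a single circle. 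Hence every fiber of $\tau|_A$ is a single circle.

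It then remains to upgrade local triviality to a global trivialization, which is automatic because any $S^1$-bundle over a contractible base is trivial; equivalently, picking any continuous section of $\tau_0+l$ on $Q_{c-m}$ and lifting it to a single sheet of the covering $f^l|_A$ produces a continuous section of $\tau|_A$, from which a trivialization is constructed using the flow of the fiber-preserving trivialization on $Q_{c-m}$. The main obstacle I expect is the middle step — clean justification that every level curve is a single circle; this is where the interplay between the covering-space structure of $f^l|_A$ and the trivial fibration of $\tau_0$ does the real work, and one can equivalently phrase it by classifying connected coverings of $Q_{c-m}\cong S^1\times[0,1]$, which forces $f^l|_A$ to be homeomorphic to the standard $k$-fold cyclic cover $(z,s)\mapsto(z^k,s)$ preserving the interval coordinate.
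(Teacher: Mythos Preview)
Your proof is correct and follows essentially the same route as the paper's own argument: both factor $\tau|_A$ through the regular covering $f^l|_A\colon A\to Q_{c-m}$, observe that each fiber is a priori a finite disjoint union of circles, use that the boundary fibers $\partial^\pm A$ are single circles (since $A$ is an annulus by Lemma~\ref{lm:atom_planar}), and then invoke continuity/local constancy of the number of components to conclude every fiber is a single $S^1$. Your treatment is slightly more explicit about the final step---upgrading local triviality to a global trivialization via contractibility of the base interval---which the paper leaves implicit.
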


\begin{proof}
  We will now prove that if $A \cap \Sing(f^l) = \emptyset$, then the
  restriction of $\tau$ to $A$ defines a trivial fiber bundle.
  According to Lemma~\ref{lm:atom_planar},
  $A$ is a regular covering of $Q_{c-m}$.
  Similar to the proof of
  Lemma~\ref{lm:level_curve_regular_covering},
  we get $\forall t\in [c+l-m-1,c+l-m]$ $f^l(\tau^{-1}(t)\cap A)=\tau_0^{-1}(t-l+m)$.
  Then $\tau^{-1}(t)\cap A \subset f^{-l}(\tau_0^{-1}(t-l+m))$.
  $\tau_0^{-1}(t-l+m)\approx S^1$ $\Rightarrow$ $f^{-l}(\tau_0^{-1}(t-l+m))$
  is a finite number of $S^1$.

  Lamination $\mathcal{L}$ on level curves of $\tau$ on $A$ is a regular
  covering of the regular foliation on level curves of $\tau_o$ on
  $Q_{c-m}$. Therefore, $\mathcal{L}$ itself is regular foliation where
  each level curve is homeomorphic to a finite number of $S^1$.
  Its boundaries $\partial^+ A$ and $\partial^- A$ consist of one
  $S^1$ each.
  By continuity, each level curve in $\mathcal{L}$ is also homeomorphic to one $S^1$
  which gives us the statement of the Lemma.
\end{proof}

By the choice of $c$,
the boundary of any atom belongs to a regular level curve of the function $\tau$.
Therefore, when restricted on the boundary, $f$ is a regular covering
and connected components of the boundary are circles.
The number of connected components is finite because $f$ is a
finite-to-one map.

\begin{definition}\label{def:atom_boundary_type}
  An atom $A$
  is called to be of
  \emph%
  {(boundary) type $(a,b)$}
  if $\partial^- A$ consists of $a$ circles
  and $\partial^+ A$ consists of $b$ circles.
\end{definition}

This lemma establishes how the boundary type of an atom changes under
$f^n$ depending on the presence or absence of singular points within the atom.
\begin{lemma}\label{lm:atom_ab_type_map_classification}
  Let $f^n$ takes an atom $A_1$ of type $(a_1, b_1)$ to
  an atom $A_2$ of type $(a_2, b_2)$.

  If the atom $A_1$ does not have singular points of $f^n$,
  then

  1-1) if the atom $A_2$ is of type $(1, 1)$,
  then $A_1$ is of type $(1, 1)$ too;

  1-2) if the atom $A_2$ is of type $(a_2, b_2)$, where $a_2\not=1$ or $b_2\not=1$,
  then $A_1$ is of type $(a_2, b_2)$ too and $f^n$ is a homeomorphism
  from $A_1$ to $A_2$.

  If the atom $A_1$ does have singular points of $f^n$,
  then 2) $a_1> a_2$, or $b_1> b_2$ or both.
\end{lemma}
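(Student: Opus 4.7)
The plan is to combine the Riemann--Hurwitz formula~\eqref{eq:RiemannHurvitz} with the planarity of atoms (Lemma~\ref{lm:atom_planar}), and for Case~1-2 an additional analysis of the $\tau$-fibration (Corollary~\ref{cor:level_curve_regular_covering_S}). First I would verify that $f^n|_{A_1}\colon A_1 \to A_2$ is a branched covering of some degree $d \geq 1$ onto $A_2$: since $f^n(A_1)$ is connected and sits inside $\tau^{-1}([c+g_2-1,\,c+g_2])$ (writing $g_i$ for the generation of $A_i$), it lies in the single atom $A_2$; openness of $f^n$ on interiors together with compactness of $A_1$ and connectedness of $A_2$ then forces $f^n(A_1)=A_2$. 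In Case~1 this covering is unramified.

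Applying Riemann--Hurwitz together with $\chi(A_i) = 2 - (a_i + b_i)$ from planarity, and rearranging, yields the identity
\[
  a_1 + b_1 \;=\; d\,(a_2 + b_2 - 2) + 2 + R,
\]
where $R \geq 0$ collects the total ramification contribution of $A_1 \cap \Sing(f^n)$, and $R \geq 1$ exactly when $A_1 \cap \Sing(f^n) \neq \emptyset$. For Case~2, $R \geq 1$; since $d \geq 1$ and $a_2 + b_2 \geq 2$, the inequality $d(a_2+b_2-2) + 2 \geq a_2+b_2$ holds, so adding $R$ gives $a_1 + b_1 > a_2 + b_2$, which forces $a_1 > a_2$ or $b_1 > b_2$. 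For Case~1-1 ($R=0$, $a_2+b_2=2$), the identity collapses to $a_1 + b_1 = 2$, so $a_1 = b_1 = 1$.

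The main obstacle is Case~1-2 ($R = 0$, $a_2 + b_2 \geq 3$), where the identity $a_1+b_1 = d(a_2+b_2-2)+2$ permits $d \geq 2$ a priori, so the Euler-characteristic bookkeeping alone is not enough. To exclude higher-degree unramified covers I expect to use the extra structure of the $\tau$-fibration: by Corollary~\ref{cor:level_curve_regular_covering_S}, $f^n|_{A_1}$ sends level curves of $\tau$ in $A_1$ onto level curves in $A_2$, and because the cover is unramified each critical point of $\tau|_{A_2}$ (i.e., each point of $A_2 \cap \Sing(f^{g_2})$) lifts to exactly $d$ preimage critical points in $A_1$ of the same local type. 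Tracking the combinatorics of how the level-curve circles merge and split through these critical levels from $\partial^- A_1$ up to $\partial^+ A_1$, and comparing with the analogous evolution in $A_2$, I expect to show that any nontrivial ($d \geq 2$) lift forces $A_1$ either to disconnect or to violate planarity, contradicting that $A_1$ is a connected planar atom. Once $d = 1$ is forced, $f^n|_{A_1}$ is a homeomorphism, and the boundary correspondence $f^n(\partial^{\pm}A_1) = \partial^{\pm}A_2$ together with $a_1 + b_1 = a_2 + b_2$ yields $(a_1,b_1) = (a_2,b_2)$.
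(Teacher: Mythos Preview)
Your handling of Cases~1-1 and~2 is exactly the argument the paper intends: its entire proof is the single sentence ``by direct application of the Riemann--Hurwitz formula~\eqref{eq:RiemannHurvitz} and Lemma~\ref{lm:atom_planar},'' and your Euler-characteristic bookkeeping makes that precise. You are also right that Case~1-2 does not fall out of the identity $a_1+b_1=d(a_2+b_2-2)+2$ alone; the paper's one-line proof glosses over this.

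However, your proposed resolution of Case~1-2 has a real gap. The expectation that any connected unramified cover of degree $d\ge 2$ must disconnect or fail to be planar is false once $\chi(A_2)<0$. Take $A_2$ of type $(2,1)$, a pair of pants with $\pi_1(A_2)\cong F_2$. Choose the surjection $\pi_1(A_2)\to\mathbb Z/2$ sending the two $\partial^-$-loops to the nontrivial element and the $\partial^+$-loop to the trivial element. The associated degree-$2$ cover $A_1$ is \emph{connected}, has $\chi(A_1)=-2$ and four boundary circles, hence is a planar four-holed sphere of type $(2,2)$. The pulled-back $\tau$ has two saddles at a common level, two circles just above and two just below, and one checks directly that the singular level (a graph on two vertices with four edges) embeds in the sphere with exactly two ``above'' faces and two ``below'' faces, consistent with a planar connected atom. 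So neither planarity nor connectedness, nor tracking the merge--split combinatorics of $\tau$-level curves, excludes $d\ge 2$ on its own.

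To close Case~1-2 you would need an obstruction of a different nature (e.g.\ something global about how the atoms assemble inside the compact surface $M$, or a constraint specific to the given $f$). Alternatively, note that every later use of this lemma in the paper (Lemmas~\ref{lm:1crit_point_atom_type} and~\ref{lm:atoms_yuild_ends_infinity}) actually requires only the weaker conclusion $a_1\ge a_2$ and $b_1\ge b_2$, which is immediate from surjectivity of $f^n$ on each boundary circle of $A_2$; that much you do get for free.
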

The proof is by direct application of the Riemann-Hurvitz formula~\ref{eq:RiemannHurvitz}
and the Lemma~\ref{lm:atom_planar}.

\begin{corollary}\label{cor:atom_ab_type_map_foliation}
  Let $f^n$ takes an atom $A_1$ 
  to an atom $A_2$. 
  If the atom $A_1$ does not have singular points of $f^n$,
  then $A_1$ and $A_2$ have homeomorphic partitionings on level curves of $\tau$.
\end{corollary}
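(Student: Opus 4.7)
The plan is to split on the two sub\discretionary{-}{}{-}cases of Lemma~\ref{lm:atom_ab_type_map_classification} that remain once we assume $A_1\cap\Sing(f^n)=\emptyset$. In sub-case~1-2, when $A_2$ has boundary type $(a_2,b_2)\neq(1,1)$, the restriction $f^n|_{A_1}\colon A_1\to A_2$ is a homeomorphism, and the relation $\tau(f^n(x))=\tau(x)-n$ from Lemma~\ref{lm:pseudo_Butcher_timeline_coordinate}~\ref{emum:lm_timeline_ii} forces this homeomorphism to carry $\tau^{-1}(t)\cap A_1$ bijectively onto $\tau^{-1}(t-n)\cap A_2$ for every $t$. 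That is already a homeomorphism of the two partitions, so this sub-case is immediate and the level\discretionary{-}{}{-}curve structure is transported by $f^n$ itself.

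The delicate sub-case is~1-1, where both $A_1$ and $A_2$ are of type $(1,1)$ but $f^n$ is in general only a nontrivial covering, so the partitions cannot be compared through $f^n$ directly. Here I would use the Riemann-Hurwitz formula to upgrade ``boundary type $(1,1)$'' into ``no singular points of the relevant iterate'' for each atom separately. Writing $A_i$ as a connected component of $f^{-l_i}(f^{m_i}(Q_c))$, formula~(\ref{eq:RiemannHurvitz_atom}) gives $\chi(A_i)=-\sum_{p\in A_i\cap\Sing(f^{l_i})}(e_p-1)$. Since $\partial A_i$ consists of only two circles, Lemma~\ref{lm:atom_planar} rules out $\chi(A_i)<0$, as that would force at least three boundary components; hence $\chi(A_i)=0$ and the non\discretionary{-}{}{-}negative sum on the right must vanish term by term. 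Therefore $A_i\cap\Sing(f^{l_i})=\emptyset$ for each $i$, and Lemma~\ref{lm:atom_trivial_fiber_bundle} applies to exhibit each $\tau|_{A_i}$ as a trivial $S^1$-bundle over a length\discretionary{-}{}{-}$1$ interval. Any two such fibrations are tautologically homeomorphic as partitioned spaces.

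The main obstacle is precisely sub-case~1-1: the dynamics does not by itself produce a homeomorphism between $A_1$ and $A_2$, and one must recover the partition equivalence by an indirect route. The Riemann-Hurwitz step is the crux, converting the purely topological datum (number of boundary circles equal to two) into the analytic datum needed to apply Lemma~\ref{lm:atom_trivial_fiber_bundle} (absence of singular points of $f^{l_i}$ on $A_i$). After this reduction, both partitions are standard cylinders $[c',c'+1]\times S^1$ foliated by $\{t\}\times S^1$, and the homeomorphism between them is an abstract one rather than one induced by the map $f^n$.
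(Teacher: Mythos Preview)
Your proof is correct and matches the intended argument. The paper states this result as an unproved corollary of Lemma~\ref{lm:atom_ab_type_map_classification}, so there is no explicit proof to compare against; your case split on sub-cases~1-1 and~1-2 of that lemma, together with the Riemann--Hurwitz step (equivalently, the contrapositive of Lemma~\ref{lm:atom_planar}) and Lemma~\ref{lm:atom_trivial_fiber_bundle} for the $(1,1)$ case, is exactly the natural route and is what the paper has in mind.
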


\subsection{Molecules.}

\begin{definition}\label{def:molecule_fundIIloc}
  A connected subset of $S$ which is obtained by gluing different
  atoms together is called \emph{molecule}.
\end{definition}

A molecule that consists of finite number of atoms is called a finite molecule.

\begin{lemma}\label{lm:finite_molecule_planar}
  A finite molecule is a planar set.
\end{lemma}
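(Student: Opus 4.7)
The plan is to proceed by induction on the number $k$ of atoms comprising $P$. The base case $k = 1$ is immediate from Lemma~\ref{lm:atom_planar}, which asserts that every atom is either a closed annulus or a planar disc with holes.

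For the inductive step, I would select an atom $A \subset P$ of maximal generation $n$. Since adjacent atoms must differ in generation by exactly one (they share boundary circles at a single $\tau$-level, which is internal for one atom and external for the other), the external boundary $\partial^+ A$ at level $\tau = c + n$ is not shared with any other atom of $P$; hence $A$ is glued to $P' := \bigcup_{B \in P,\, B \neq A} B$ only along circles of its internal boundary $\partial^- A$. By the inductive hypothesis, each connected component of $P'$ is planar.

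It remains to show that assembling $A \cup P'$ by gluing the planar pieces along the shared circles of $\partial^- A$ preserves planarity. Gluing two planar surfaces along a single common boundary circle evidently yields a planar surface, so the argument reduces to showing that the gluing is \emph{tree-like}: each shared circle of $\partial^- A$ attaches $A$ to a distinct connected component of $P'$. The main obstacle is to rule out the scenario in which two or more distinct circles of $\partial^- A$ are glued to the same connected component of $P'$; a simple Euler-characteristic computation shows that gluing two planar surfaces along two common boundary circles always produces a handle and therefore a non-planar surface.

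To dispose of this obstacle, I would use the branched-covering structure of $f^{l_A}|_A \colon A \to Q_{c - m_A}$ together with Corollary~\ref{cor:level_curve_regular_covering_S}: every circle of $\partial^- A$ covers the single inner boundary circle of the annulus $Q_{c - m_A}$ under $f^{l_A}$, and the neutrally-saturated structure of $\tau$-level sets forces distinct circles of $\partial^- A$ to descend through distinct atoms of generation $n - 1$, whose extensions in $P$ remain in disjoint connected components of $P'$. Once this tree-like gluing structure is established, a straightforward inductive embedding into the plane assembles $A$ and the planar components of $P'$ into a planar surface, completing the proof.
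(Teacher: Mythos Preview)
Your approach is genuinely different from the paper's, and the crucial step has a gap.

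The paper does not induct on atoms at all. It argues by contradiction globally: if a finite molecule $D\subset\tau^{-1}([m,n])$ carried positive genus, the iterates $f^{k(n-m)}(D)$ would sit in pairwise disjoint $\tau$-strips and each still carry genus, so $S$ would acquire infinitely many handles, contradicting $S\subset M$ with $M$ compact. No tree-structure of the atom adjacency graph is used or needed; in fact, that tree-structure (Lemma~\ref{lm:one_adjacent_boundary} and Corollary~\ref{cor:atom2disconnect}) is obtained in the paper only \emph{after} and \emph{from} Lemma~\ref{lm:finite_molecule_planar}.

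Your induction stands or falls on the claim that distinct circles of $\partial^- A$ attach to distinct connected components of $P'$, and your justification does not establish it. The appeal to ``neutrally-saturated structure of $\tau$-level sets'' only says that $\tau$ is constant on $O^\perp$; it says nothing about which connected component of a preimage a given boundary circle lies in. Under $f^{l_A}$ every circle of $\partial^- A$ covers the \emph{same} inner boundary circle of $Q_{c-m_A}$, so the covering data alone does not separate them; and Riemann--Hurwitz does not forbid a connected branched cover of an annulus from having positive genus, so the Euler-characteristic route does not close the argument either. Worse, even if you could show that distinct circles of $\partial^- A$ meet distinct adjacent atoms, you would still owe the stronger statement that those atoms lie in distinct connected components of $P'$, which your sketch does not address. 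In short, what you are trying to prove directly is exactly Lemma~\ref{lm:one_adjacent_boundary}, and the paper's logic runs the other way: planarity first (via the compactness-of-$M$ argument), then the one-boundary-circle adjacency lemma as a corollary.
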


\begin{proof}
  Assume the converse. Let a molecule $D$ be a surface with non-zero
  genus. It does not matter whether the genus is oriented or not.
  Since the molecule $D$ is finite, its image $\tau(D)$ is bounded:
  there exist $m,n\in Z$ such that $D\subset \tau^{-1}([m,n])$.
  According to the Riemann-Hurvitz formula~\ref{eq:RiemannHurvitz},
  a surface with genus can only cover a surface with genus.
  Therefore, for all $k\ge 0$ the molecule $f^{k(n-m)}(D)$ also is a surface with genus.
  Then $S$ is a surface of infinite genus
  because $\forall k\ge 0$ $f^{k(n-m)}(D)\subset S$.
  But $S$ is a part of compact oriented surface $M$.
  This contradiction proves the Lemma.
\end{proof}

If an infinite molecule $E$ has a genus, then $E$ contains a finite
submolecule $D$ that has a genus, because both a handle and a
M\"{o}bius strip
are compact sets. As a consequence, we got
\begin{corollary}\label{cor:any_molecule_planar}
  An infinite molecule is a planar set.
\end{corollary}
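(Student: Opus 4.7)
The plan is to reduce this to Lemma~\ref{lm:finite_molecule_planar} by a compactness argument, following the hint already given in the paragraph above the statement. Assume for contradiction that the infinite molecule $E$ is not planar. As a two\discretionary{-}{}{-}dimensional manifold with boundary embedded in the compact surface $M$, the failure of planarity of $E$ is witnessed by a compact subset $K\subset E$: either an embedded handle (in the orientable case) or an embedded M\"{o}bius band (in the non\discretionary{-}{}{-}orientable case).

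Because $E$ is a union of atoms glued along pieces of level curves of $\tau$, and each atom is closed in $S$, the family of atoms that meet $K$ is locally finite; since $K$ is compact, only finitely many atoms $A_1,\dots,A_r$ intersect $K$. The set $K$ is connected and contained in $E$, so the subfamily $\{A_1,\dots,A_r\}$ can be extended by finitely many atoms of $E$ that link them into a chain of adjacent atoms. Gluing these atoms together produces a \emph{finite} submolecule $D\subseteq E$ with $K\subseteq D$.

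By Lemma~\ref{lm:finite_molecule_planar} the finite molecule $D$ is planar. But $D$ contains the compact handle or M\"{o}bius band $K$, and therefore has nonzero genus, a contradiction. Hence $E$ must be planar.

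The main obstacle is the first step: one must ensure that the non\discretionary{-}{}{-}planarity of $E$ is genuinely witnessed by a compact subset, and that finitely many atoms (together with a finite chain connecting them inside $E$) suffice to enclose that witness. Once this is granted, the reduction to the finite case is immediate and follows the same Riemann\discretionary{-}{}{-}Hurwitz obstruction already exploited in the proof of Lemma~\ref{lm:finite_molecule_planar}.
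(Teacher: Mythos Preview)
Your proof is correct and follows essentially the same approach as the paper: the paper's argument (given in the sentence immediately preceding the corollary) is simply that a handle or M\"{o}bius strip is compact, hence lies in a finite submolecule $D$, contradicting Lemma~\ref{lm:finite_molecule_planar}. You have merely supplied the extra details (local finiteness of the atom cover of $K$, connecting the atoms into a molecule) that the paper leaves implicit.
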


\begin{corollary}\label{cor:atom2disconnect}
  Any atom disconnects $S$ into multiple connected components.
\end{corollary}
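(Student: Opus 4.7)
My plan is to combine the planarity of the component $S$ (Corollary~\ref{cor:any_molecule_planar}) with the structural information about an atom $A$ supplied by Lemma~\ref{lm:atom_planar}: namely, that $\partial A=C_1\sqcup\dots\sqcup C_k$ is a disjoint union of $k\ge 2$ circles. Embedding $S$ into the $2$-sphere reduces the question to counting the pieces into which a finite family of disjoint Jordan curves cuts $S^2$, and I show each piece meets $S\setminus A$.

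\textbf{Key steps.} First, I would argue that $\partial A$ has at least two components. The atom $A$ satisfies $f^l(A)=Q_{c-m}$ for some $l,m\ge 0$, so the restriction $f^l|_{\partial A}\colon\partial A\to\partial Q_{c-m}$ is a regular covering onto a two-component set (the choice of $c$ keeps $\partial A$ away from the trajectories of singular points, so $f^l|_{\partial A}$ is unramified). Both $\partial^+A$ and $\partial^-A$ are therefore nonempty, giving $k\ge 2$. Second, fix a topological embedding $\iota\colon S\hookrightarrow S^2$; this is available because $S$ is a second-countable $2$-manifold of genus $0$ (no handle, no M\"obius band) by Corollary~\ref{cor:any_molecule_planar}, and such surfaces embed in $S^2$ by the standard classification. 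Third, apply the Jordan curve theorem to the $k$ pairwise disjoint circles $\iota(C_1),\dots,\iota(C_k)\subset S^2$. An Euler-characteristic count (or a routine induction on $k$) shows that $S^2\setminus\iota(A)$ has exactly $k$ connected components $D_1,\dots,D_k$, each an open topological disc with $\partial D_j=\iota(C_j)$. Fourth, record the chain of inclusions
\[
\iota(S\setminus A)=\iota(S)\setminus\iota(A)\subset S^2\setminus\iota(A)=\bigsqcup_{j=1}^k D_j,
\]
which a priori bounds the number of components of $S\setminus A$ above by $k$. Finally, since $S$ is a $2$-manifold without boundary and $A$ is a codimension-$0$ submanifold-with-boundary of $S$, each circle $C_j$ admits a two-sided collar in $S$; its outer side is a connected subset of $S\setminus A$ accumulating on $C_j$, so its $\iota$-image must lie entirely inside $D_j$. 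Thus $\iota(S\setminus A)\cap D_j\neq\emptyset$ for every $j$, and because points lying in different $D_j$ cannot be joined by a path in $S^2\setminus\iota(A)$, the set $S\setminus A$ splits into at least $k\ge 2$ connected components, which gives the Corollary.

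\textbf{Main obstacle.} The only delicate point is the passage from ``planar'' in the sense used earlier in this section (no handle, no M\"obius band) to an honest embedding into $S^2$. This is a standard consequence of the classification of second-countable surfaces, and $S$ is automatically second-countable as an open subset of the compact manifold $M$. Once the embedding is fixed, the rest of the argument is a direct application of the Jordan curve theorem together with the local product structure of the $2$-manifold $S$ near a tamely embedded circle, so no further machinery is needed.
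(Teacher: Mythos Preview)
Your argument is correct and is essentially the approach the paper intends: the corollary is stated without proof, as an immediate consequence of the planarity of $S$ (Corollary~\ref{cor:any_molecule_planar}) together with the fact that an atom has at least two boundary circles (Lemma~\ref{lm:atom_planar}). You have simply written out the details---embedding into $S^2$, the Jordan curve count, and the collar argument ensuring each complementary disc meets $S\setminus A$---that the paper leaves implicit.
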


\begin{lemma}\label{lm:one_adjacent_boundary}
  An atom $A$ and a molecule $B$ can only be adjacent with a single common
  connected component of their boundary.
\end{lemma}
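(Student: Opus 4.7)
\medskip

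\noindent\textbf{Proof proposal.}
My plan is to argue by contradiction using Euler characteristic bookkeeping, relying on the fact that every molecule in $S$ is planar. Suppose the atom $A$ and the molecule $B$ share $j\ge 2$ distinct connected components of their boundary. Since each connected component of $\partial A$ and of $\partial B$ is a circle (by Lemma~\ref{lm:atom_planar} applied to the atom, and likewise for the boundary circles of $B$ obtained by gluing), the intersection $A\cap B$ consists of $j$ disjoint circles, so $\chi(A\cap B)=0$.

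Next, I would observe that $A\cup B$ is itself a molecule: it is connected (both $A$ and $B$ are connected and share at least one boundary circle), and it is the gluing of $A$ together with all atoms of $B$. Therefore by Corollary~\ref{cor:any_molecule_planar} the set $A\cup B$ is planar, and since $A$ and $B$ are also planar and connected, each of the three spaces is a (connected) sphere-with-holes. So if $A$ has $k$ boundary circles, $B$ has $l$ boundary circles, and $A\cup B$ has $m$ boundary circles, then $\chi(A)=2-k$, $\chi(B)=2-l$, $\chi(A\cup B)=2-m$. On the other hand, after gluing along $j$ circles the new boundary count is $m=k+l-2j$, since each shared circle is interior in the union.

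Now I would plug these into the inclusion-exclusion identity
\begin{equation*}
\chi(A\cup B)=\chi(A)+\chi(B)-\chi(A\cap B).
\end{equation*}
The left side equals $2-m = 2-k-l+2j$, while the right side equals $(2-k)+(2-l)-0=4-k-l$. Equating gives $2j=2$, hence $j=1$, contradicting $j\ge 2$.

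The argument is essentially routine once one believes planarity of $A\cup B$, so I expect no serious obstacle beyond verifying that $A\cup B$ genuinely qualifies as a molecule in the sense of Definition~\ref{def:molecule_fundIIloc} (i.e.\ that it is a legitimate gluing of atoms inside $S$, so that Corollary~\ref{cor:any_molecule_planar} applies). The one subtlety I would make explicit is that the shared boundary components of $A$ and $B$ need not all lie on the same level curve of $\tau$: some could be pieces of $\partial^+A$ and others of $\partial^-A$. This, however, does not affect the Euler characteristic computation above, because the bookkeeping depends only on the number of shared circles, not on which level sets they live in.
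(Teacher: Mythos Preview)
Your approach is the same as the paper's: both argue by contradiction, observing that gluing $A$ to $B$ along two or more boundary circles would produce a molecule with positive genus, contradicting planarity. The paper states this in one line (``this will create a handle''), while you make it quantitative via an Euler-characteristic count.

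There is one gap you should close. Your formula $\chi(B)=2-l$ presumes that $B$ is a \emph{compact} planar surface with $l$ boundary circles, but a molecule may be infinite (you yourself invoke Corollary~\ref{cor:any_molecule_planar}, which is there precisely for the infinite case). For non-compact $B$ the formula fails: already an open disc has $\chi=1$, not $2$. The fix is easy and is implicit in the paper's proof: if $A$ and $B$ share $j\ge 2$ boundary circles, take a compact arc in $B$ joining two of those shared circles; it is covered by finitely many atoms, so there is a finite connected submolecule $B_0\subset B$ whose boundary contains at least two of the shared circles. Then $A\cup B_0$ is a finite molecule, Lemma~\ref{lm:finite_molecule_planar} applies, and your Euler-characteristic bookkeeping goes through verbatim with $B_0$ in place of $B$. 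Note that the paper itself cites Lemma~\ref{lm:finite_molecule_planar} rather than Corollary~\ref{cor:any_molecule_planar}, which is exactly this reduction to the finite case.
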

\begin{proof}
  Assume the converse.
  Suppose we consider the gluing of
  $A$ and $B$
  using 2 or more components of their boundary.
  But this will create a handle
  and a resulting molecule would get a genus,
  contradicting Lemma~\ref{lm:finite_molecule_planar}.
\end{proof}

\begin{lemma}\label{lm:molecule_intersection}
  Let $B_1$, $B_2$ be molecules.
  Then the set $B_1\cap B_2$, if not empty, is either molecule
  or a single common connected component of their boundary.
\end{lemma}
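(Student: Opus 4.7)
The plan is to reduce the statement to the planarity result for molecules (Corollary~\ref{cor:any_molecule_planar}) via the same handle-construction idea used in the proof of Lemma~\ref{lm:one_adjacent_boundary}. Recall that $S$ is tiled by atoms meeting only along regular level circles of $\tau$, and a molecule is a connected union of atoms. Let $\mathcal{A}_i$ denote the set of atoms composing $B_i$ ($i=1,2$) and let $C=B_1\cap B_2$. Note that $B_1\cup B_2$ is again a connected union of atoms, hence a molecule, so by Corollary~\ref{cor:any_molecule_planar} it is planar.

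First I would consider the case $\mathcal{A}_1\cap \mathcal{A}_2=\emptyset$. Since distinct atoms meet only along pieces of regular level curves of $\tau$, $C$ is contained in $\partial B_1\cap \partial B_2$ and is a disjoint union of shared boundary circles. If there were two distinct such circles, I would join them by an arc $\gamma_1$ travelling through interiors of atoms of $\mathcal{A}_1$ and return by an arc $\gamma_2$ through interiors of atoms of $\mathcal{A}_2$; the concatenation is a simple closed curve in $B_1\cup B_2$ whose two sides respectively contain atoms of $B_1\setminus B_2$ and $B_2\setminus B_1$, so it is non-separating in $B_1\cup B_2$. This forces a handle, contradicting Corollary~\ref{cor:any_molecule_planar}. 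Hence $C$ is a single common boundary circle.

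Next I would consider the case $\mathcal{A}_1\cap \mathcal{A}_2\neq \emptyset$. Put $C^\circ=\bigcup\{A : A\in \mathcal{A}_1\cap \mathcal{A}_2\}$, so $C^\circ\subseteq C$. To obtain $C=C^\circ$, suppose some $x\in C\setminus C^\circ$ exists; then $x$ lies on a boundary between an atom of $\mathcal{A}_1\setminus\mathcal{A}_2$ and an atom of $\mathcal{A}_2\setminus\mathcal{A}_1$, hence on a shared boundary circle $\sigma$ of $\partial B_1\cap \partial B_2$ disjoint from $C^\circ$. Combined with the common boundary of any shared atom, this gives two distinct interfaces between $B_1$ and $B_2$, and the same handle argument as above rules it out. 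Thus $C=C^\circ$. To see that $C^\circ$ is connected (hence a molecule), I would assume the contrary and decompose $C^\circ$ into components $C_1,C_2$; connectedness of $B_1$ yields a chain of atoms in $B_1$ from $C_1$ to $C_2$ necessarily passing through $B_1\setminus B_2$, and symmetrically for $B_2$. Concatenating these chains produces a non-separating loop in $B_1\cup B_2$, again contradicting planarity.

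The main technical obstacle is making the handle-construction rigorous when the molecules are infinite: one must extract a finite submolecule of $B_1\cup B_2$ that actually carries the putative handle and invoke Lemma~\ref{lm:finite_molecule_planar}, exactly as Corollary~\ref{cor:any_molecule_planar} does. Once this localisation step is in place, every branch of the case analysis closes uniformly via the same planarity obstruction, yielding the stated dichotomy.
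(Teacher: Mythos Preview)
Your proposal is correct and follows essentially the same approach as the paper: the paper's proof consists of the single line ``The proof's idea is the same as in Lemma~\ref{lm:one_adjacent_boundary},'' i.e.\ exactly the handle-creation argument reduced to planarity that you spell out. Your case analysis and the localisation to a finite submolecule simply make explicit what the paper leaves to the reader.
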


The proof's idea is the same as in Lemma~\ref{lm:one_adjacent_boundary}.

\subsection{A chain of atoms.}

\begin{definition}\label{def:forward_chain_of_atoms}
    A sequence of atoms $(A_i)_{i\in I}$, where $I$ is an interval in $\zz$,
    is called a \emph{forward chain} if
    $\partial^- A_i \cap \partial^+ A_{i+1} \neq \emptyset$ for all $i \in I$.
\end{definition}


By Definition~\ref{def:atom_comp_generation}, adjacent atoms in the forward chain differ in
generation by 1:
if an atom $A_{i}$ belongs to generation $k_i$, then the adjacent atom $A_{i+1}$ belongs to generation $k_i+1$.

\begin{definition}\label{def:backward_chain_of_atoms}
  A sequence of atoms $(A_i)_{i\in I}$, where $I$ is an interval in $\zz$,
  is called a \emph{backward chain} if
  $\partial^- A_{i}\cap \partial^+ A_{i-1}\not=\emptyset$ for all $i\in I$.
\end{definition}

By definition, a backward chain is a forward chain where atoms are
enumerated in reverse order.
We will assume, unless explicitly stated otherwise, that
in a forward chain atoms are enumerated in the
direction of decreasing $\tau$ values (towards attractor),
in a backward chain -- towards repeller.

Chains can be classified as follows:
\begin{itemize}[leftmargin=*]
\item \textbf{Finite chain:}
  $I$ is finite interval of integers, e.g., $0\dots n$.
\item \textbf{Semi\discretionary{-}{}{-}infinite chain:}
  $I$ is semi\discretionary{-}{}{-}infinite interval of integers, e.g., $-\infty \dots 0$.
\item \textbf{Infinite chain:} $I=\zz$.
\end{itemize}
Consecutive atoms in a chain form a connected set called a
molecule. To emphasize the connection between a chain $(A_i)_{i\in I}$
and its corresponding molecule $C$, we write $C=\cup_{i\in I} A_i$.

\begin{lemma}\label{lm:chain_atom_to_atom}
  Let $(A_0, A_1)$ be a forward chain of atoms.
  If $f(A_{0})\cap A_{1}\neq\emptyset$, then $f(A_{0})=A_{1}$.
\end{lemma}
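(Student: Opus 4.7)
The plan is to prove $f(A_0) = A_1$ by a two-sided inclusion argument, using only the openness and closedness of inner mappings (Lemma~\ref{lm:collection_inner_mapping_properties}) and the translation identity $\tau(f(x)) = \tau(x) - 1$ from Lemma~\ref{lm:pseudo_Butcher_timeline_coordinate} (equivalently Corollary~\ref{cor:level_curve_regular_covering_S}). As set-up, choose $a \in \rr$ so that $A_0$ is a connected component of $\tau^{-1}([a, a+1])$; since the forward-chain relation forces $\partial^- A_0 \cap \partial^+ A_1 \neq \emptyset$, and $\partial^- A_0 \subseteq \tau^{-1}(a)$, the atom $A_1$ must be a connected component of $\tau^{-1}([a-1, a])$.

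For the easy direction $f(A_0) \subseteq A_1$: the set $A_0$ is closed and connected, so $f(A_0)$ is closed and connected. The translation identity places $f(A_0)$ inside $\tau^{-1}([a-1, a])$, hence inside a single connected component of that preimage, and the hypothesis $f(A_0) \cap A_1 \neq \emptyset$ identifies that component as $A_1$.

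For the reverse inclusion $A_1 \subseteq f(A_0)$, I would examine the intersection $f(A_0) \cap \Int(A_1)$, where $\Int(A_1) := A_1 \cap \tau^{-1}((a-1, a))$ is the interior of $A_1$ as a $2$-manifold with boundary (recall by Lemma~\ref{lm:atom_planar} that $A_1$ is a compact surface with boundary). Since $\partial A_0 \subseteq \tau^{-1}(\{a, a+1\})$, one gets $f(\partial A_0) \subseteq \tau^{-1}(\{a-1, a\})$, which is disjoint from $\Int(A_1)$; hence $f(A_0) \cap \Int(A_1) = f(\Int(A_0))$. This set is open in $\Int(A_1)$ because $f$ is open on $M$, relatively closed in $\Int(A_1)$ because $f(A_0)$ is closed in $M$, and non-empty because $\Int(A_0)$ is non-empty. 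The interior of a connected $2$-manifold with boundary is connected, so this clopen non-empty subset of $\Int(A_1)$ must be all of $\Int(A_1)$. Passing to closures inside the closed set $f(A_0) \subseteq A_1$ then yields $A_1 \subseteq f(A_0)$.

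The main obstacle is bookkeeping rather than depth: one must carefully distinguish the manifold-interior of $A_1$ from set-theoretic interiors in $M$ and verify that $f(\partial A_0)$ really lands on the two extreme $\tau$-levels that bound $A_1$, so that the clopen argument on $\Int(A_1)$ actually applies. Notably, no Riemann--Hurwitz computation and no information about singular points inside $A_0$ are needed, which is why the lemma is formulated without any hypothesis on $\Sing(f) \cap A_0$.
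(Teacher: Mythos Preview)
Your proof is correct and follows essentially the same route as the paper's: both identify the $\tau$-strip $\tau^{-1}([a-1,a])$ that must contain $f(A_0)$ and then use connectedness of $f(A_0)$ together with the hypothesis $f(A_0)\cap A_1\neq\emptyset$ to single out $A_1$. Your clopen argument on $\Int(A_1)$ spells out the surjectivity step that the paper compresses into the phrase ``by continuity, $f$ maps $A_0$ onto a single connected component.''
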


\begin{proof}
  Suppose $A_0$ belongs to generation $k_0$ (Definition~\ref{def:atom_comp_generation}).
  By Definition~\ref{def:atom_comp_tau}, $A_0$ is a connected component of $\tau^{-1}([c-k_0-1,c-k_0])$.
  Consequently, $A_1$ belongs to generation $k_0+1$ and is a connected component of $\tau^{-1}([c-k_0-2,c-k_0-1])$.

  From Corollary~\ref{cor:level_curve_regular_covering_S},
  we know that $f(\tau^{-1}([c-k_0-2,c-k_0-1]))=\tau^{-1}([c-k_0-1,c-k_0])$.
  Due to the continuity of $f$, $f(A_0)$ maps to a single connected
  component of $\tau^{-1}([c-k_0-2,c-k_0-1])$.
  Since $f(A_{0})\cap A_{1}\neq\emptyset$, it follows that $f(A_{0})=A_{1}$.
\end{proof}

\begin{lemma}\label{lm:train_of_atomsN}
  Let $(A_i)_{i=0}^n$ be a forward chain of atoms.
  If $f(A_{n-1})=A_n$, then $f(A_{i-1})=A_i$ for all $i=1, \dots, n$.
\end{lemma}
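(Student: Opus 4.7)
The plan is to proceed by backward induction on $i$, with base case $i=n$ given by the hypothesis $f(A_{n-1})=A_n$. In the inductive step, assuming $f(A_k)=A_{k+1}$ has been established for some $k$ with $1\le k\le n-1$, I would deduce $f(A_{k-1})=A_k$.

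The natural reduction is to apply Lemma~\ref{lm:chain_atom_to_atom} to the forward chain $(A_{k-1},A_k)$: it suffices to show $f(A_{k-1})\cap A_k\neq\emptyset$. To establish this I would pick a point $x\in\partial^- A_{k-1}\cap\partial^+ A_k$, which is non-empty by the forward chain condition, and denote by $\gamma_k$ the boundary circle through $x$. By the choice of the constant $c$, the circle $\gamma_k$ avoids all trajectories of singular points, so $f$ is a local homeomorphism at $x$, and a small tubular neighborhood of $\gamma_k$ maps as a covering onto a tubular neighborhood of the image circle $f(\gamma_k)$. Locally, the $A_{k-1}$-side of $\gamma_k$ maps into $f(A_{k-1})$, while the $A_k$-side maps into $f(A_k)=A_{k+1}$ by the inductive hypothesis; hence $f(A_{k-1})$ is an atom of the same generation as $A_k$, adjacent to $A_{k+1}$ from above along the circle $f(\gamma_k)\subset\partial^+ A_{k+1}$.

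The main obstacle is then to identify $f(A_{k-1})$ with $A_k$ itself, rather than with some other atom of the same generation that is also adjacent to $A_{k+1}$, since a priori several atoms may be attached to $A_{k+1}$ along different circles of $\partial^+ A_{k+1}$. My approach is to use the inductive hypothesis together with the description of $f|_{A_k}\colon A_k\to A_{k+1}$ as a branched covering to argue that $f(\gamma_k)$ coincides with the chain's next shared boundary circle $\gamma_{k+1}=\partial^- A_k\cap\partial^+ A_{k+1}$. Once that coincidence is verified, Lemma~\ref{lm:one_adjacent_boundary} implies that the atom of the given generation adjacent to $A_{k+1}$ along $\gamma_{k+1}$ is unique, forcing $f(A_{k-1})=A_k$ and closing the induction. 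The technical heart of the argument is this circle-matching step, which requires tracking how the covering $f|_{A_k}$ acts on the specific boundary components picked out by the chain.
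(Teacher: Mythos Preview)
Your plan coincides with the paper's proof: backward induction with base case $f(A_{n-1})=A_n$, and in the inductive step one uses the shared boundary circle $\Delta=\partial^+A_{m-1}\cap\partial^-A_{m-2}$ to reduce to Lemma~\ref{lm:chain_atom_to_atom}. The paper's write-up of the inductive step is simply: from $\Delta\subset A_{m-1}$ and $f(A_{m-1})=A_m$ one gets $f(\Delta)\subset A_m$; then from $\Delta\subset A_{m-2}$ the paper asserts $f(A_{m-2})\cap A_{m-1}\neq\emptyset$ and invokes Lemma~\ref{lm:chain_atom_to_atom}.

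The ``circle-matching'' step you single out as the technical heart --- showing that $f(\gamma_k)$ lands on the component of $\partial^+A_{k+1}$ shared with $A_k$, rather than on some other boundary circle of $A_{k+1}$ --- is precisely the passage from ``$f(\Delta)\subset A_m$'' to ``$f(A_{m-2})\cap A_{m-1}\neq\emptyset$'' in the paper, and the paper does not spell it out beyond that one line. So you have reconstructed the paper's argument and correctly located the point that carries the weight; the paper offers no additional machinery here beyond the local-homeomorphism picture you already describe (adjacent sides of $\gamma_k$ map to adjacent sides of $f(\gamma_k)$, forcing $f(A_{k-1})$ to be the atom sitting above $A_{k+1}$ at $f(\gamma_k)$). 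If you want to close the argument fully you will indeed need to pin down why that atom is $A_k$; the paper itself does not elaborate further.
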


\begin{proof}
  Let $A_0$ belong to generation $k_0$.
  Then, by definition, $A_i$ belongs to generation $k_0 + i$ for all $i=0, \dots, n$.
  According to Definition~\ref{def:atom_comp_tau}, atoms of generation
  $k$ are connected components of $\tau^{-1}([c-k-1,c-k])$.
  By Lemma~\ref{lm:level_curve_regular_covering}, $f$ maps these atoms
  onto atoms of generation $k+1$ (which are connected components of
  $\tau^{-1}([c-k-2,c-k-1])$).

  We proceed by induction. The base case, $f(A_{n-1})=A_n$, is given.
  Assume that $f(A_{m-1})=A_m$ for some $m \in \{1, \dots, n\}$.

  To show that $f(A_{m-2})=A_{m-1}$, consider the set
  $\Delta = \partial^+ A_{m-1} \cap \partial^- A_{m-2}$.
  Since $\Delta \subset A_{m-1}$ and $f(A_{m-1})=A_m$, it follows that
  $f(\Delta) \subset A_m$. Moreover, as $\Delta \subset A_{m-2}$, we
  have $f(A_{m-2}) \cap A_{m-1} \neq \emptyset$.
  By Lemma~\ref{lm:chain_atom_to_atom}, this implies $f(A_{m-2})=A_{m-1}$.

  Therefore, by induction, $f(A_{i-1})=A_i$ for all $i=1, \dots, n$.
\end{proof}

\subsection{Main and auxiliary atoms.}

$S$ is connected totally invariant set.

A totally invariant set of non\discretionary{-}{}{-}invertible maps
can be represented as a union of an invariant subset and a
sequence of possibly empty consecutive preimages of this invariant subset.

Generally,
this representation is not unique and depends on the choice of the invariant subset.
However, within the fixed partitioning of $S$, we can unambiguously classify atoms
as either part of the core invariant subset, called main atoms, or part of preimages of
the core invariant subset, called auxiliary atoms.

\begin{definition}\label{def:main_auxiliary_atoms}
  An atom $A$ is called \emph{main} if the pair $(A, f(A))$ forms a forward chain of atoms.
  Otherwise, it is called \emph{auxiliary}.
\end{definition}

Atoms contained in $\overline{V_c}$ provide an example of main atoms.

\begin{definition}
  A chain consisting entirely of main atoms is called a \emph{main chain}.
  A chain consisting entirely of auxiliary atoms is called an \emph{auxiliary chain}.
  Otherwise, the chain is called a \emph{mixed chain}.
\end{definition}

\begin{lemma}\label{lm:train_of_atoms0}
  Let $A$ be a main atom.
  Then $f(A)$ is also a main atom.
\end{lemma}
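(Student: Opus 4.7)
The plan is to unpack the definition of a main atom as the condition $\partial^- A \cap \partial^+ f(A) \neq \emptyset$, exhibit a point in this intersection, push it forward by $f$, and verify that its image witnesses the analogous intersection $\partial^- f(A) \cap \partial^+ f^2(A) \neq \emptyset$. First I would recall (using Corollary~\ref{cor:level_curve_regular_covering_S} together with the argument given in the proof of Lemma~\ref{lm:chain_atom_to_atom}) that if $A$ is an atom of generation $k$, then $f(A)$ is an entire atom of generation $k{+}1$, and consequently $f^2(A) = f(f(A))$ is an atom of generation $k{+}2$. This makes the statement "$(f(A), f^2(A))$ is a forward chain of atoms" well-posed and reduces the lemma to checking the non-empty boundary intersection.

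Next, fix $x \in \partial^- A \cap \partial^+ f(A)$, which exists by hypothesis. Since $x \in A$ with $\tau(x) = c - k - 1$, Corollary~\ref{cor:level_curve_regular_covering_S} gives $f(x) \in f(A)$ with $\tau(f(x)) = c - k - 2$, i.e. $f(x) \in \partial^- f(A)$. It remains to show $f(x) \in \partial^+ f^2(A)$. Because $x \in \partial^+ f(A) \subset f(A)$, the surjectivity of $f \colon A \to f(A)$ (from the branched covering structure used in the proof of Lemma~\ref{lm:chain_atom_to_atom}) produces $w \in A$ with $f(w) = x$, and $\tau(w) = \tau(x) + 1 = c - k$ forces $w \in \partial^+ A$. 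Then $f(x) = f^2(w) \in f^2(A)$, and $\tau(f(x)) = c - k - 2$ places it on the outer boundary, so $f(x) \in \partial^+ f^2(A)$.

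Combining the two memberships yields $f(x) \in \partial^- f(A) \cap \partial^+ f^2(A)$, so this intersection is non-empty and $(f(A), f^2(A))$ is a forward chain; by definition $f(A)$ is main. The only mildly delicate point, and the one I expect to be the main obstacle to state cleanly, is the step $x \in \partial^+ f(A) \Rightarrow x = f(w)$ for some $w \in \partial^+ A$; it hinges on $f\!\restriction_A \colon A \to f(A)$ being surjective and on the exact shift $\tau(f(w)) = \tau(w) - 1$, both of which are already established earlier in the paper, so no new machinery is required.
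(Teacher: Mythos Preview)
Your proof is correct and follows essentially the same route as the paper: take a witness point $x\in\partial^- A\cap\partial^+ f(A)$, push it forward by $f$, and check that $f(x)$ lies in $\partial^- f(A)\cap\partial^+ f^2(A)$. The only difference is that your detour through a preimage $w\in A$ with $f(w)=x$ is unnecessary---since $x\in\partial^+ f(A)\subset f(A)$ you get $f(x)\in f(f(A))=f^2(A)$ immediately, which is exactly how the paper handles that step.
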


\begin{proof}
  Since $A$ is a main atom, by definition, $(A, f(A))$ is a forward chain.
  This implies that $\partial^- A \cap \partial^+ f(A) \neq \emptyset$.
  Applying $f$ to both sides of this intersection, we get:
  \[
  f(\partial^- A\cap \partial^+ f(A)) = f(\partial^- A)\cap
  f(\partial^+ f(A)) = \partial^- f(A)\cap \partial^+ f(f(A))
  \neq\emptyset.
  \]
  Therefore, $(f(A), f(f(A)))$ is also a forward chain, making $f(A)$ a main atom.
\end{proof}

\begin{corollary}\label{cor:f_of_main_is_main}
  Let $C = \bigcup_{i=0}^n A_i$ be a main forward chain of atoms.
  Then $f(C)$ is also a main forward chain of atoms.
\end{corollary}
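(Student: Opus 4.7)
The plan is to show the two defining properties of a main forward chain survive application of $f$: that the image atoms remain main, and that consecutive pairs still share internal/external boundary. The first part is a direct atomwise invocation of Lemma~\ref{lm:train_of_atoms0}, so the real content lies in verifying the forward chain condition for the image sequence $(f(A_0), f(A_1), \ldots, f(A_n))$.

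First I would establish the generation bookkeeping: if $A_i$ has generation $k+i$, then by Corollary~\ref{cor:level_curve_regular_covering_S} the set $f(A_i)$ is contained in $\tau^{-1}([c-k-i-2,c-k-i-1])$, and by the connectivity/closedness argument used in Lemma~\ref{lm:chain_atom_to_atom} it is in fact a single atom of generation $k+i+1$. Moreover, the same argument (compare the proof of Lemma~\ref{lm:level_curve_regular_covering}) gives the two boundary identities $f(\partial^- A_i)=\partial^- f(A_i)$ and $f(\partial^+ A_i)=\partial^+ f(A_i)$: inclusion $\subseteq$ is immediate from $\tau(f(x))=\tau(x)-1$, and the reverse inclusion follows since any $y\in\partial^- f(A_i)$ has a preimage $x\in A_i$ with $\tau(x)=\tau(y)+1$, forcing $x\in\partial^- A_i$.

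With these identities in hand, the forward chain condition propagates immediately. For each $i=0,\ldots,n-1$, the hypothesis that $(A_i,A_{i+1})$ is a forward chain gives $\partial^- A_i\cap\partial^+ A_{i+1}\neq\emptyset$. Applying $f$ and using the general inclusion $f(X\cap Y)\subseteq f(X)\cap f(Y)$ together with the boundary identities above yields
\[
\emptyset\neq f(\partial^- A_i\cap\partial^+ A_{i+1})\subseteq f(\partial^- A_i)\cap f(\partial^+ A_{i+1})=\partial^- f(A_i)\cap \partial^+ f(A_{i+1}),
\]
so $(f(A_i),f(A_{i+1}))$ is a forward chain of atoms. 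Since each $A_i$ is main, Lemma~\ref{lm:train_of_atoms0} ensures each $f(A_i)$ is main, and combining this with the forward chain relations just established shows that $f(C)=\bigcup_{i=0}^n f(A_i)$ is a main forward chain.

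No genuine obstacle is expected here; this corollary is essentially bookkeeping on top of Lemma~\ref{lm:train_of_atoms0}, with the only delicate point being the equality $f(\partial^\pm A_i)=\partial^\pm f(A_i)$, which is why I would spell it out explicitly rather than rely on the non-injective set inclusion $f(X\cap Y)\subseteq f(X)\cap f(Y)$ alone.
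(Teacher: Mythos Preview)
Your proposal is correct and follows essentially the same approach as the paper. The paper states this as an immediate corollary of Lemma~\ref{lm:train_of_atoms0} without an explicit proof; the implicit argument is exactly the one you give, namely apply Lemma~\ref{lm:train_of_atoms0} atomwise and repeat the boundary computation from that lemma's proof (where the identity $f(\partial^\pm A)=\partial^\pm f(A)$ is used) to verify the forward chain condition for consecutive images. Your version is in fact more careful than the paper's own treatment: you justify the boundary identities explicitly and correctly note that only the inclusion $f(X\cap Y)\subseteq f(X)\cap f(Y)$ is needed, whereas the paper's proof of Lemma~\ref{lm:train_of_atoms0} writes an equality there.
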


\begin{lemma}\label{lm:main_and_auxiliary_atoms}
  Let $A$ be a main atom. Then:
  \begin{enumerate}[leftmargin=*]
  \item\label{enum:main_and_auxiliary_atoms_1}
    Any atom $A_i$ adjacent to $A$ via $\partial^+ A$ is main.
  \item\label{enum:main_and_auxiliary_atoms_2}
    For atoms adjacent to $A$ via $\partial^- A$, $f(A)$ is main, and the rest are auxiliary.
  \end{enumerate}
\end{lemma}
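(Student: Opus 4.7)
\medskip

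\noindent\textbf{Proof plan.} The plan is to recognize that the two types of adjacency are themselves two-element forward chains in disguise, and then to splice them with the forward chain $(A,f(A))$ (available because $A$ is main) into a chain of length three; Lemma~\ref{lm:train_of_atomsN} will then pin down $f$ on the outer atom and hand us both statements.

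The first step is the generation bookkeeping. Let $A$ have generation $n$. Because distinct atoms of the same generation are disjoint connected components of $\tau^{-1}([c-n-1,c-n])$, any atom sharing a point with $\partial^+ A$ (which sits at level $c-n$) must have a boundary component at that level and hence be of generation $n-1$, with $\partial^- A_i\cap\partial^+ A\neq\emptyset$; in other words $(A_i,A)$ is already a forward chain. Symmetrically any atom $A'$ adjacent to $A$ across $\partial^- A$ has generation $n+1$ and $(A,A')$ is a forward chain.

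For part~(1), combining the forward chain $(A_i,A)$ with the forward chain $(A,f(A))$ yields a forward chain $(A_i,A,f(A))$ of length three. Lemma~\ref{lm:train_of_atomsN}, applied with the trivially satisfied terminal condition $f(A)=f(A)$, propagates backwards and gives $f(A_i)=A$. Consequently $(A_i,f(A_i))=(A_i,A)$ is a forward chain, so $A_i$ is main by definition.

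For part~(2), $f(A)$ is automatically adjacent to $A$ across $\partial^- A$ (this is just the definition of $A$ being main) and is itself main by Lemma~\ref{lm:train_of_atoms0}. Suppose some other atom $A'\neq f(A)$ adjacent to $A$ across $\partial^- A$ were also main. Then $(A,A',f(A'))$ would be a forward chain, and Lemma~\ref{lm:train_of_atomsN} with terminal condition $f(A')=f(A')$ would force $f(A)=A'$, contradicting $A'\neq f(A)$; hence every such $A'$ is auxiliary. The only delicate point in the whole argument is the generation bookkeeping at the beginning --- once one sees that adjacency through $\partial^\pm A$ is a forward-chain condition, both claims follow from the same splicing trick and a single invocation of Lemma~\ref{lm:train_of_atomsN}.
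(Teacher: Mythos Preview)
Your proof is correct. Part~(1) matches the paper's approach exactly: both invoke Lemma~\ref{lm:train_of_atomsN} on the chain $(A_i,A,f(A))$.

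For part~(2) the two arguments diverge. The paper uses Corollary~\ref{cor:atom2disconnect}: since an atom disconnects $S$, one looks at the connected components of $S\setminus A$ containing $B$ and $f(f(A))$ respectively, and argues from $f(A)=f(A)$ that $f$ carries $K_B$ onto $K_{f(f(A))}$, so $B$ and $f(B)$ cannot be adjacent. Your argument instead reuses the splicing trick from part~(1): assume $A'$ main, form the forward chain $(A,A',f(A'))$, and let Lemma~\ref{lm:train_of_atomsN} force $f(A)=A'$. Your route is more uniform --- the same lemma handles both parts --- and avoids the somewhat terse component-tracking in the paper's proof; the paper's version, on the other hand, gives a slightly more geometric picture of \emph{why} $B$ fails to be main (its image lands in the wrong component of $S\setminus f(A)$). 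Both are valid.
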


\begin{proof}
  (\ref{enum:main_and_auxiliary_atoms_1}).
  This follows directly from Lemma~\ref{lm:train_of_atomsN}.
  Since $\{A$, $f(A)$, $f(f(A))\}$ is a main forward chain by Lemma~\ref{lm:train_of_atoms0},
  any atom adjacent to $A$ via $\partial^+ A$ must also be main.

  (\ref{enum:main_and_auxiliary_atoms_2}).
  Let $B$ be an atom such that $B \neq f(A)$ and $\partial^- A \cap \partial^+ B \neq \emptyset$.
  By Corollary~\ref{cor:atom2disconnect}, $A$ divides $S$ into connected components.
  Denote the connected component of $S \setminus A$ containing
  $f(f(A))$ and $B$ as $K_{f(f(A))}$ and $K_B$, respectively.
  Since $A$ maps onto $f(A)$, $B$ maps onto $f(f(A))$, and consequently,
  $K_B$ maps onto $K_{f(f(A))}$. Therefore, $B$ is auxiliary.
\end{proof}

\begin{corollary}\label{cor:backward_chain_is_main}
  Let $(A_0, A_1, \dots)$ be a backward chain such that $A_0$ is main.
  Then every atom in the chain is main.
\end{corollary}

\begin{corollary}\label{cor:forward_chain_is_auxiliary}
  Let $(A_0, A_1, \dots)$ be a forward chain such that $A_0$ is auxiliary.
  Then every atom in the chain is auxiliary.
\end{corollary}

\begin{corollary}\label{cor:mixed_chain_split}
  In any forward mixed chain there exists a main atom $A$ such that
  any  atom preceding $A$ in the chain is main and any atom following $A$ in the chain is auxiliary.
\end{corollary}

\begin{corollary}\label{cor:any_atom_of_S_is_main}
  If the Pseudo-B\"{o}ttcher component $S$ contains no atoms of type $(k, *)$ where $k > 1$,
  then every atom in $S$ is main.
\end{corollary}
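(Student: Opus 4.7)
The plan is to combine Lemma~\ref{lm:main_and_auxiliary_atoms} with the connectedness of $S$. The hypothesis that $S$ has no atoms of type $(k,*)$ with $k>1$ means that every atom $A$ has $\partial^- A$ consisting of exactly one circle. Hence, by Lemma~\ref{lm:one_adjacent_boundary}, each atom $A$ has exactly one atom adjacent to it through $\partial^- A$.

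First I would observe that main atoms do exist: any atom contained in $\overline{V_c}$ is main, as already noted after the definition of a main atom. Next, I would show that under our hypothesis every atom adjacent to a main atom is itself main. For an atom adjacent via $\partial^+ A$, this is immediate from Lemma~\ref{lm:main_and_auxiliary_atoms}\ref{enum:main_and_auxiliary_atoms_1}. For an atom $B$ adjacent via $\partial^- A$, Lemma~\ref{lm:main_and_auxiliary_atoms}\ref{enum:main_and_auxiliary_atoms_2} says that $f(A)$ is main and every other atom adjacent via $\partial^- A$ is auxiliary; but under our hypothesis $\partial^- A$ is a single circle, so there is only one such adjacent atom, which therefore must coincide with $f(A)$ and is main.

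Finally I would invoke connectedness of $S$. Since $S$ is partitioned into atoms whose pairwise intersections are boundary circles, the adjacency graph of atoms is connected. Starting from any atom $A_0 \subset \overline{V_c}$, which is main, the previous paragraph shows inductively along any chain of adjacent atoms that each atom encountered is main. Hence every atom of $S$ is main.

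The only genuine obstacle is the counting argument identifying the unique $\partial^-$-adjacent atom with $f(A)$; but this reduces directly to Lemma~\ref{lm:one_adjacent_boundary} together with the hypothesis on the boundary type, so no additional work is needed.
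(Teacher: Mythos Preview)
Your argument is correct and is exactly the reasoning the paper has in mind: the corollary is stated immediately after Lemma~\ref{lm:main_and_auxiliary_atoms} without proof, and the intended deduction is precisely that under the hypothesis $\partial^- A$ is a single circle, so part~\ref{enum:main_and_auxiliary_atoms_2} of the lemma leaves no room for auxiliary neighbours, and one then propagates ``main'' through the connected adjacency graph of atoms starting from $\overline{V_c}$. Your explicit appeal to connectedness (which can be grounded in Lemma~\ref{lm:S_eq_Pn_seq} if one wants a clean justification that the adjacency graph of atoms is connected) makes the implicit step in the paper fully rigorous.
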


\begin{lemma}\label{lm:Y_seq_is_infinite}
  Let $\{A_n\}_{n \geq 0}$ be a main backward infinite chain such that
  the first atom $A_{0}$ is of type $(1,*)$, but
  $A_{i}$ are of type $(a_i,1)$ with $a_i \ge 2$ for all $i > 1$.
  Then $f$ has infinitely many singular points in $S$.
\end{lemma}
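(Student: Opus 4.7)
The plan is a proof by contradiction: assume that $\Sing(f)\cap S$ is a finite set, and derive a contradiction. The starting point is Lemma~\ref{lm:atom_ab_type_map_classification} applied to each step $f : A_n \to A_{n-1}$ of the chain for $n \geq 3$. Since $A_{n-1}$ is of type $(a_{n-1}, 1)$ with $a_{n-1} > 1$ (so not of type $(1,1)$), Case 1-2 of that lemma tells us that if $A_n$ contains no singular point of $f$ then $A_n$ has the same type as $A_{n-1}$, giving $a_n = a_{n-1}$ and making $f|_{A_n}$ a homeomorphism; Case 2 tells us that if $A_n$ does contain a singular, then $a_n > a_{n-1}$. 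Consequently the sequence $(a_n)_{n \geq 2}$ is non-decreasing, and strictly increases exactly at those indices $n$ for which $A_n \cap \Sing(f) \neq \emptyset$. Under the assumed finiteness, only finitely many such strict increases can occur, so $(a_n)$ must stabilize: there exist $N_0 \geq 2$ and $a > 1$ with $a_n = a$ and $d_n = 1$ for all $n > N_0$.

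For these large indices, $\partial^- A_n$ consists of $a$ disjoint circles, of which one is the chain circle shared with $\partial^+ A_{n-1}$ and the remaining $a - 1 \geq 1$ are \emph{side} circles, each shared with a side atom that is auxiliary by Lemma~\ref{lm:main_and_auxiliary_atoms}. Because $f|_{A_n}$ is a homeomorphism for $n > N_0$, it carries these side circles bijectively to the $a-1$ side circles of $\partial^- A_{n-1}$ and thereby induces a bijection $B \mapsto f(B)$ between side atoms of $A_n$ and side atoms of $A_{n-1}$. Iterating this bijection downward in $n$, any side atom $B_n$ of $A_n$ produces a sequence $B_n, f(B_n), f^2(B_n), \ldots$ of side atoms of $A_n, A_{n-1}, A_{n-2}, \ldots$ respectively.

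Following this sequence until the index drops to $N_0$, one reaches the region where $(a_k)$ must undergo at least one strict decrease (going backwards from $N_0$ toward $A_1$), which by the first step of the argument happens only through the presence of singular points of $f$. At such a break the bijection between side circles breaks down, forcing the descending side-atom sequence to \emph{merge} into the main chain; by Lemma~\ref{lm:one_adjacent_boundary} and Lemma~\ref{lm:chain_atom_to_atom} the resulting configuration is a non-trivial branched cover, and the Riemann-Hurwitz formula~\eqref{eq:RiemannHurvitz_atom} then forces a fresh singular point of $f$ inside the merging atom. Since for each $n > N_0$ one has $a - 1 \geq 1$ independent choices of starting side atom $B_n$, and distinct choices produce sequences terminating in distinct mergings, we accumulate infinitely many distinct singular points in $S$, contradicting our standing assumption. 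The main obstacle will be making this merging step precise: one must verify that each descending side-atom sequence genuinely demands a new singular point not already counted earlier in the chain, which requires a careful combinatorial analysis using Lemma~\ref{lm:main_and_auxiliary_atoms}, Lemma~\ref{lm:finite_molecule_planar}, and the boundary bookkeeping from Definition~\ref{def:ext_int_boundary_atom}.
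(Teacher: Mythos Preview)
Your opening move---using Lemma~\ref{lm:atom_ab_type_map_classification} to show that $(a_n)_{n\ge 2}$ is non-decreasing and increases strictly exactly at indices where $A_n\cap\Sing(f)\ne\emptyset$---is correct. But the ``side-atom merging'' endgame does not work, and is in any case unnecessary.

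The flaw: you claim that for each $n>N_0$ the $a-1$ side atoms $B_n$ give rise, after iterating $f$, to \emph{distinct} mergings and hence to infinitely many singular points. But since $f|_{A_n}$ is a homeomorphism for every $n>N_0$, the side atoms of $A_n$ are carried bijectively onto the side atoms of $A_{n-1}$; starting from $B_n$ or from $f^{\,n-n'}(B_n)$ (a side atom of $A_{n'}$) produces the \emph{same} descending sequence from index $\min(n,n')$ downward, and therefore the same terminal merging. All of your infinitely many starting points funnel through the $a-1$ side atoms of $A_{N_0+1}$ and terminate in at most $a-1$ mergings. The argument produces only finitely many singular points and collapses.

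A much shorter contradiction is already available from your own setup. Choose $N_0$ minimal with $a_n=a$ for all $n\ge N_0$; then $a_{N_0}>a_{N_0-1}$, so $A_{N_0}$ contains a singular point and $d_{N_0}:=\deg\bigl(f|_{A_{N_0}}\bigr)\ge 2$. On the other hand $d_{N_0+1}=1$, so $f$ restricted to the shared circle $\partial^+A_{N_0}\subset\partial^-A_{N_0+1}$ has degree~$1$; but this same circle is the entire $\partial^+A_{N_0}$, and the restriction of $f|_{A_{N_0}}$ to it covers the single circle $\partial^+A_{N_0-1}$ with degree exactly $d_{N_0}$. Hence $d_{N_0}=1$, a contradiction.

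The paper's proof is organised around precisely this degree observation, but run forwards rather than by contradiction: it shows inductively that $d_n\ge d_{n-1}+1$ for all $n$. The point is that $\partial^-A_n$ contains the shared circle $\partial^+A_{n-1}$, on which $f$ already has degree $d_{n-1}$, together with at least one further circle (since $a_n\ge 2$) contributing degree~$\ge 1$; hence $d_n\ge d_{n-1}+(a_n-1)\ge d_{n-1}+1$. Thus $d_n\to\infty$, while any $A_n$ without singular points would force $d_n=1$ by your Case~1-2. So every $A_n$ contains a singular point, and no finiteness hypothesis or side-atom bookkeeping is needed.
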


\begin{proof}
  As $A_0$ is of type $(1, *)$,
  the number of connected components in the external boundary $\partial^+$ increases from 1
  in $\partial^+ A_0$ to $a_1 \geq 2$ in $A_1$.
  Therefore,
  the map $f\colon A_1 \to A_0$ is at least a 2-sheeted covering.
  By the Riemann-Hurwitz formula (Equation~\ref{eq:RiemannHurvitz}),
  $A_1$ must contain a singular point.

  We proceed by induction. Assume that $f\colon A_{n-1} \to A_{n-2}$
  is at least an $n$-sheeted covering, implying that $f|_{\partial^+ A_{n-1}}$
  is also at least an $n$-sheeted covering.
  Since $A_{n-1}$ is of type $(*, 1)$, $\partial^+ A_{n-1}$ is connected.

  Consider $A_{n}$.
  Atoms $A_{n-1}$ and $A_{n}$ are adjacent, and
  $\partial^+ A_{n-1}\subset \partial^- A_{n}$.
  One component of $\partial^- A_{n}$ is exactly $\partial^+ A_{n-1}$.
  $a_n\ge 2$. Therefore,
  there exists at least one other component of $\partial^- A_{n}$,
  that does not belong to $A_{n-1}$.
  Therefore, $f\colon A_{n}\to A_{n-1}$ is at least $n+1$-\hspace{0em}sheeted covering.
  $A_{n}$ is of type $(1,*)$.
  Since $A_n$ is of type $(1, *)$, the Riemann-Hurwitz formula implies the existence of
  a singular point in $A_n$.

  By induction, we conclude that each $A_n$ for $n \geq 1$ contains a singular point, proving the lemma.
\end{proof}

Note that the $n$-th step establishes that $f$ is at least an $n$-sheeted covering.
This implies that not only $f$ has infinitely many singular points,
but also $f$ is not a finite-to-one map.
As we consider $f$ on the compact manifold $M$, both those cases are forbidden.

\begin{corollary}\label{cor:backward_chain_split_homeo}
  Let the Pseudo-B\"{o}ttcher component $S$ contain singular points.
  If $\{A_m\}_{m \geq 0}$ is a main backward infinite chain,
  then there exists an $N > 0$ such that for all $n \geq N$:
  \begin{enumerate}
  \item Atoms $A_n$ are of type $(*, a_n)$ with $a_n > 1$.
  \item The restriction of $f$ to $A_n$ is a homeomorphism.
  \end{enumerate}
\end{corollary}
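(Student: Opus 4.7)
The plan is to control the two integer sequences $a_n$ and $b_n$ giving the number of connected components of $\partial^- A_n$ and $\partial^+ A_n$ respectively: I would show that both stabilize and that the stable external value $b_\infty$ exceeds $1$, which is part~(1); part~(2) then follows from Lemma~\ref{lm:atom_ab_type_map_classification}. The first step is to verify that both sequences are non-decreasing and bounded above by $|\deg f|$. Since the chain is main, $f(A_n)=A_{n-1}$, and by the choice of $c$ off the $\tau$-image of the trajectories of $\Sing(f)$ the restriction $f|_{\partial^+A_n}\colon\partial^+A_n\to\partial^+A_{n-1}$ is an unbranched surjective covering of a finite disjoint union of circles by another such union; surjectivity gives $b_n\ge b_{n-1}$, while the total covering degree of $f|_{A_n}$ is bounded by $|\deg f|$ (because $f$ is finite-to-one and $A_n\subset f^{-1}(A_{n-1})$) and each preimage circle contributes degree at least one, so $b_n\le|\deg f|$. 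The same reasoning on $\partial^-$ gives $a_{n-1}\le a_n\le|\deg f|$. Both sequences therefore stabilize from some index $N_1$ onward to values $(a_\infty,b_\infty)$. Using finiteness of $\Sing(f)$ and disjointness of the atoms, I also pick $N_2$ with $A_n\cap\Sing(f)=\emptyset$ for $n\ge N_2$, and set $N:=\max(N_1,N_2)+1$.

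The key step is to rule out $b_\infty=1$. Suppose for contradiction $b_n=1$ for every $n$. Extend the chain backward by $A_{-k}:=f^k(A_0)$; by Lemma~\ref{lm:train_of_atoms0} each $A_{-k}$ is a main atom adjacent to $A_{-k+1}$, and for $k$ large enough it lies in $V_c$ and is of type $(1,1)$. In the sub-case $a_\infty>1$, let $k_*$ be the smallest integer with $a_{k_*}>1$; then $A_{k_*-1}$ is of type $(1,1)$, and the sub-chain $(A_{k_*-1+n})_{n\ge 0}$ satisfies the hypotheses of Lemma~\ref{lm:Y_seq_is_infinite}, which yields infinitely many singular points and contradicts the finiteness of $\Sing(f)$. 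In the sub-case $a_\infty=1$ every atom of the bi-infinite chain is of type $(1,1)$, and I would show that $T:=\bigcup_{m\in\zz}A_m$ is clopen in $S$. It is open because the identity $a_m+b_m=2$ combined with Lemma~\ref{lm:one_adjacent_boundary} forces each boundary circle of any $A_m$ to be shared only with the adjacent atom of the chain, so each point of $T$ has a neighborhood in $T$; it is closed because any sequence in $T$ converging in $S$ has bounded $\tau$-range and therefore lies eventually in a finite union of closed atoms. Connectedness of $S$ then gives $T=S$, which forces $S$ to consist entirely of $(1,1)$ atoms without singular points, contradicting the hypothesis that $S$ has singular points.

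Consequently $b_\infty\ge 2$, which is part~(1). For part~(2), fix $n\ge N$: $A_n$ has no singular points and $A_{n-1}$ has type $(a_\infty,b_\infty)$ with $b_\infty>1$, so $A_{n-1}$ is not of type $(1,1)$; Lemma~\ref{lm:atom_ab_type_map_classification} case~(1-2) then forces $f|_{A_n}\colon A_n\to A_{n-1}$ to be a homeomorphism. The main obstacle is the $(a_\infty,b_\infty)=(1,1)$ sub-case above: proving that the bi-infinite tube of $(1,1)$ annuli is clopen in $S$ requires both Lemma~\ref{lm:one_adjacent_boundary} (to preclude any side adjacency) and the local finiteness of atoms along $\tau$; once this is in place, the other reductions to Lemma~\ref{lm:Y_seq_is_infinite} and Lemma~\ref{lm:atom_ab_type_map_classification} are routine.
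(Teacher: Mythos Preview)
Your approach is essentially the paper's intended one: reduce to Lemma~\ref{lm:Y_seq_is_infinite} for the forbidden $(a_n,1)$ shape and to Lemma~\ref{lm:atom_ab_type_map_classification} for the homeomorphism conclusion. The case analysis and the clopen argument for the $(1,1)$ sub-case are correct; in fact the latter can be shortened by observing that the extended bi-infinite chain necessarily passes through the main stump of Corollary~\ref{cor:stop_0critpoint}, which is not of type $(1,1)$.

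There is one small slip. Your bound $b_n\le|\deg f|$ does not follow from the argument you give: the covering $f|_{\partial^+A_n}\colon\partial^+A_n\to\partial^+A_{n-1}$ has total degree $d_n\le|\deg f|$, but that only yields $b_n\le d_n\cdot b_{n-1}$, not a uniform bound. Fortunately you do not need boundedness at all: once you have chosen $N_2$ with $A_n\cap\Sing(f)=\emptyset$ for $n\ge N_2$, Lemma~\ref{lm:atom_ab_type_map_classification} applied to each $f|_{A_n}\colon A_n\to A_{n-1}$ for $n>N_2$ already forces the type to be constant from $N_2$ on (either all $(1,1)$, or $f$ is a homeomorphism at every step). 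So set $N_1=N_2$ and keep the rest of your argument; monotonicity of $(a_n)$ and $(b_n)$ is still used, and is correct, to locate the first index $k_*$ in the $a_\infty>1$ sub-case and to propagate $b_n=1$ downward.
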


\subsection{Properties of Pseudo-B\"{o}ttcher Components' Partitioning.}

The concept of partitioning a Pseudo-B\"{o}ttcher component $S$ into
compact surfaces with boundary, termed atoms, is introduced in
Sections \ref{sec:PartitioningV0} and \ref{sec:atoms}.

Briefly, to construct a partitioning, we select a base annulus
$Q_c$, $Q_c\subset \overline{V_c} \subset V_0$, where
$Q_c:=\tau_0^{-1}\left(\left[c-1,c\right]\right)$,
$V_c=\tau_0^{-1}((-\infty,c))$,
$\overline{V_c}=\tau_0^{-1}((-\infty,c])$
for some $c\in (-\infty,0)$,
and $\tau_0$ is defined in Definition~\ref{def:pseudobottcher_basin1}.
Atoms are then defined as the connected components of $f^{-l}(f^m(Q_c))$ for $l, m \geq 0$,
leading to the representation $S = \bigcup_{l, m \geq 0} f^{-l}(f^m(Q_c))$
(Formula \ref{f:S_fundII_lmpart}).

Let us study how properties of this partitioning are related to the topology of $S$.

\begin{lemma}\label{lm:S_eq_Pn_seq}
  There exists a sequence of nested connected molecules $\{P_{c+n}\}_{n \geq 0}$ such that $\bigcup_{n \geq 0} P_{c+n} = S$.
\end{lemma}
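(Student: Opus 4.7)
The plan is to set $P_{c+n}$ equal to the connected component of $\tau^{-1}((-\infty, c+n])$ that contains the base annulus $Q_c$, and then verify the three required properties in turn.

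For the molecule property, note that $\tau^{-1}((-\infty, c+n]) = \bigcup_{k \ge -n} \tau^{-1}([c-k-1, c-k])$ is a union of atoms of generation at least $-n$ (Definition~\ref{def:atom_comp_tau}). Any atom meeting $P_{c+n}$ lies entirely inside it because atoms are connected, so $P_{c+n}$ is a connected union of atoms joined along common boundary components, i.e.\ a molecule in the sense of Definition~\ref{def:molecule_fundIIloc}. Nestedness $P_{c+n} \subset P_{c+n+1}$ is essentially free: both contain $Q_c$, and their union is a connected subset of $\tau^{-1}((-\infty, c+n+1])$, hence lies in the component $P_{c+n+1}$.

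The main step, and the only one requiring care, is exhaustion $\bigcup_{n\ge 0} P_{c+n} = S$. Fix $x \in S$. Since $S$ is open in the surface and connected by hypothesis, it is path-connected; pick a path $\gamma\colon[0,1]\to S$ with $\gamma(0)=x$ and $\gamma(1)\in Q_c$. Compactness of $\gamma([0,1])$ and continuity of $\tau$ (Lemma~\ref{lm:pseudo_Butcher_timeline_coordinate}) make $T := \max_{t\in[0,1]} \tau(\gamma(t))$ finite; choosing $n \ge 0$ with $c+n \ge T$ gives $\gamma([0,1]) \subset \tau^{-1}((-\infty, c+n])$. Since $\gamma([0,1])$ is connected and meets $P_{c+n}$ at $\gamma(1)$, it lies entirely in $P_{c+n}$, so $x \in P_{c+n}$. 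This is also where the anticipated obstacle sits: one must produce a connecting path staying inside $S$ and then control $\tau$ along it, relying on path-connectedness of $S$ and the continuity of $\tau$, both of which are available.
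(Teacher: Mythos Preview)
Your proof is correct and defines $P_{c+n}$ exactly as the paper does: the connected component of $\tau^{-1}((-\infty,c+n])$ containing $Q_c$ (the paper phrases this inductively as the component containing $P_{c+n-1}$, which is the same thing). The only genuine difference is in the exhaustion step. The paper argues that $K=\bigcup_{n\ge 0}P_{c+n}$ has empty boundary in $S$ (since $\partial P_{c+n}\subset\tau^{-1}(c+n)$ and these levels escape to $+\infty$), hence $K$ is open and closed in the connected set $S$, so $K=S$. You instead use path-connectedness of the open connected surface $S$ together with compactness of a path to bound $\tau$ along it; this is more elementary and arguably cleaner. One small point worth noting: the paper's inductive setup also records the extra property $f(P_{c+n})=P_{c+n-1}$, which is not part of the lemma's statement but is used implicitly in later arguments (e.g.\ Lemma~\ref{lm:0critpoint_component_topology}); your definition yields this too, but you may want to state it if you rely on it downstream.
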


\begin{proof}
  $\overline{V_c}$ is a molecule since
  $\overline{V_c} = \bigcup_{n \geq 0} Q_{c-n} = \bigcup_{n \geq 0} f^n(Q_c)$.

  We construct a sequence $\{P_{c+n}\}_{n \geq 0}$ inductively.
  The base case consists of the molecules $P_{c+0} := \overline{V_c}$
  and $P_{c+(-1)} := \overline{V_{c-1}}$.
  Assuming that for some $n$, we have a molecule $P_{c+n}$ satisfying $f(P_{c+n}) = P_{c+n-1}$,
  $\partial P_{c+n} \neq \emptyset$, and $\partial P_{c+n} \subset \tau^{-1}(c+n)$,
  we proceed to the induction step. Let $P_{c+n+1}$ be the connected
  component of $\tau^{-1}([-\infty, c+n+1])$ containing $P_{c+n}$.
  By construction, $\partial P_{c+n+1}\neq \emptyset$ and
  $\partial P_{c+n+1} \subset \tau^{-1}(c+n+1)$.
  From the connectivity of $P_{c+n+1}$ and the fact that $f(P_{c+n}) = P_{c+n-1}$,
  it follows that $f(P_{c+n+1}) = P_{c+n}$.
  Thus, the induction hypothesis holds for $n+1$.

  By induction, we obtain a sequence of sets $\{P_{c+n}\}_{n \geq 0}$.
  Their union, $K = \bigcup_{n \geq 0} P_{c+n}$, is a connected
  component of $S$ without boundary. Since $S$ is connected,
  we conclude that $K = S$, completing the proof.
\end{proof}

\begin{lemma}\label{lm:0critpoint_component_topology}
  If $S$ contains no singular points of $f$, then $S$ is homeomorphic to an open annulus.
\end{lemma}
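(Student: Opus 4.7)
The plan is to use the exhaustion $S=\bigcup_{n\ge 0}P_{c+n}$ from Lemma~\ref{lm:S_eq_Pn_seq} and prove by induction that each $P_{c+n}$ is homeomorphic to a half-open annulus $S^1\times(-\infty,0]$ with a single boundary circle. Passing to the ascending union then gives $S\cong S^1\times\rr$, an open annulus. Preliminarily I would observe that because $S$ is totally invariant and $S\cap\Sing(f)=\emptyset$, one also has $S\cap\Sing(f^l)=\emptyset$ for every $l\ge 0$ (since $\Sing(f^l)=\bigcup_{i=0}^{l-1}(f^i)^{-1}(\Sing(f))$ and $f^i(S)\subseteq S$). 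Therefore Lemma~\ref{lm:atom_planar} applies to every atom $A\subset S$, giving that $A$ is a closed annulus of type $(1,1)$, and Lemma~\ref{lm:atom_trivial_fiber_bundle} realises $\tau|_A$ as a trivial $S^1$-bundle over its defining interval.

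The base case $P_{c+0}=\overline{V_c}$ is immediate from the trivial fiber bundle structure required by condition~\ref{enum:pseudobottcher1_tau0}-\ref{enum:pseudobottcher1_tau0_i} of Definition~\ref{def:pseudobottcher_basin1}: it is homeomorphic to $S^1\times(-\infty,c]$ with boundary the single circle $\tau_0^{-1}(c)$. For the inductive step, assume $\partial P_{c+n}$ is a single circle $\gamma$. I would show that precisely one atom $A$ of the next generation is glued to $P_{c+n}$ along $\gamma$: locally at each $p\in\gamma$ the atom of the next generation containing $p$ in its $\partial^+$ is uniquely determined, so the assignment $p\mapsto A(p)$ is locally constant on $\gamma$, and by connectedness of $\gamma$ it is constant; since $\partial^+A$ is itself a single circle, $\partial^+A=\gamma$. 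Hence $P_{c+n+1}=P_{c+n}\cup A$ is obtained by gluing a closed annulus to the boundary circle of a half-open annulus, again yielding a half-open annulus whose single boundary circle is $\partial^-A$, closing the induction.

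I expect the main technical subtlety to lie in that uniqueness step: ruling out that several atoms of the next generation could share the circle $\gamma$, or that $\partial P_{c+n}$ could split into more than one component at some stage. Both are excluded by combining type~$(1,1)$ from Lemma~\ref{lm:atom_planar} with the connectedness of $\gamma$ and the manifold structure of $S$, but keeping the two-sided boundary bookkeeping clean is the most delicate part of the argument. Once the induction is complete, the ascending union of the nested half-open annuli $P_{c+n}$ (each embedded as $S^1\times(-\infty,c+n]\hookrightarrow S^1\times(-\infty,c+n+1]$) is homeomorphic to $S^1\times\rr$, which is the statement of the lemma.
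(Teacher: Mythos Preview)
Your proposal is correct and follows essentially the same approach as the paper: both use the exhaustion $S=\cup_{n\ge 0}P_{c+n}$ from Lemma~\ref{lm:S_eq_Pn_seq}, inductively show each $P_{c+n}$ is a half-open annulus by gluing a unique type-$(1,1)$ atom (via Lemma~\ref{lm:atom_planar} and Lemma~\ref{lm:atom_trivial_fiber_bundle}) to the single boundary circle, and conclude that the ascending union is an open annulus. Your explicit observation that $S\cap\Sing(f^l)=\emptyset$ (needed to invoke Lemma~\ref{lm:atom_planar} for atoms in higher generations) and your more detailed uniqueness argument for the adjacent atom are useful clarifications that the paper leaves implicit.
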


\begin{proof}
  Consider the sequence $\{P_{c+n}\}_{n \geq 0}$ from Lemma~\ref{lm:S_eq_Pn_seq}.
  The molecule $P_{c+0} := \overline{V_c}$ serves as the base case for our induction.

  Assume that for some $n$
  the molecule $P_{c+n}$ from Lemma \ref{lm:S_eq_Pn_seq}
  is homeomorphic to a closed disc with one punctured point.
  In particular, the boundary of $P_{c+n}$ is a circle and its
  internals $\operatorname{Int}(P_{c+n})$ is an open annulus.
  Consider the function $\tau$ from Lemma~\ref{lm:pseudo_Butcher_timeline_coordinate}
  and let $\tau_{c+n}: \operatorname{Int}(P_{c+n}) \to (-\infty, 0)$ be the
  restriction of the function $\tau + c + n$ to $\operatorname{Int}(P_{c+n})$.
  Further assume that
  $\operatorname{Int}(P_{c+n})$ with the function $\tau_{c+n}$ satisfies the
  conditions of Definition \ref{def:pseudobottcher_basin1}.

  For the induction step, since $\partial P_{c+n}$ is a circle,
  there exists a unique atom $A_{c+n+1}$ adjacent to $P_{c+n}$.
  Since $S$ contains no singular points of $f$,
  Lemma~\ref{lm:atom_planar} implies that $A_{c+n+1}$ is homeomorphic to a closed annulus
  and each $\partial^+ A$ and $\partial^- A$ are homeomorphic to a circle.
  The union of $P_{c+n}$ and $A_{c+n+1}$ is the connected component
  of $\tau^{-1}([-\infty, c+n+1])$ containing $P_{c+n}$, and thus
  equals $P_{c+n+1}$ by definition.
  By construction, $P_{c+n+1}$ is homeomorphic to $P_{c+n}$ and to a
  punctured closed disc,
  $\partial P_{c+n+1}$ is homeomorphic to a circle,
  and $\operatorname{Int}(P_{c+n+1})$ is homeomorphic to an open annulus.

  Take the function $\tau$ from Lemma~\ref{lm:pseudo_Butcher_timeline_coordinate}
  and let $\tau_{c+n+1}: \operatorname{Int}(P_{c+n+1}) \to (-\infty, 0)$ be the
  restriction of $\tau + c + n + 1$ to $\operatorname{Int}(P_{c+n+1})$.
  By Lemma \ref{lm:atom_trivial_fiber_bundle}, each level curve of $\tau_{c+n+1}$ is a circle.
  The function $\tau$ itself satisfies the conditions of
  Lemma~\ref{lm:pseudo_Butcher_timeline_coordinate}.
  Thus, $\operatorname{Int}(P_{c+n+1})$ with the function $\tau_{c+n+1}$ satisfies the
  conditions of Definition \ref{def:pseudobottcher_basin1}.

  By induction, we obtain a sequence of sets $\{P_{c+n}\}_{n \geq 0}$
  whose union $K = \bigcup_{n \geq 0} P_{c+n}$ is homeomorphic to an open annulus
  and is a connected component of $S$.
  Since $S$ is connected, we conclude that $K = S$, proving the lemma.
\end{proof}

\begin{corollary}\label{cor:stop_0critpoint}
  If $S$ has singular points of $f$, then
  in the sequence $\{P_{c+n}\}_{n \geq 0}$ from Lemma~\ref{lm:S_eq_Pn_seq},
  there exists an $n\geq 1$ such that
  the induction hypothesis of Lemma~\ref{lm:S_eq_Pn_seq} holds for
  $P_0$, \dots, $P_{c+n-1}$,
  but the adjacent atom $A^0_{c+n}$ contains singular points,
  preventing the continuation of the induction process for $P_{c+n}$.
\end{corollary}

\begin{proof}
  If no adjacent atom $A^0_{c+k}$ contains singular points for any $k$,
  then the process described in the proof of Lemma~\ref{lm:0critpoint_component_topology}
  can be continued indefinitely, resulting in $S$ without singular points.
\end{proof}

Define $V_{c+n}:=\operatorname{Int}(P_{c+n})$
for each $n\geq 0$ such that $P_{c+n}$ satisfies the induction hypothesis of Lemma~\ref{lm:S_eq_Pn_seq}.
Clearly, $\overline{V_{c+n}}=P_{c+n}$.
The sets $V_{c+n}$ can be viewed as extensions of the structure in
Definition \ref{def:pseudobottcher_basin1} from the neighborhood $V_0$
further into $S$.

Recall that $X^\perp$ denotes the neutral saturation of the set
$X$ (Definition~\ref{def:neutral_saturation}).

\begin{definition}
  Let $S$ has singular points.
  Let the induction hypothesis of Lemma~\ref{lm:S_eq_Pn_seq} holds for
  $P_0$, \dots, $P_{c+n-1}$,
  but the adjacent atom $A^0_{c+n}$ contains singular points
  (see Corollary~\ref{cor:stop_0critpoint}).
  Then:
  \begin{itemize}
  \item The molecule $P_{c+n-1} = \overline{V_{c+n-1}}$ is called the \emph{main trunk}.
  \item The atom $A^0_{c+n}$ containing singular points is called the \emph{main stump}.
  \item The molecule $RT$ composed of the main stump and all atoms connected to it via forward chains (Definition \ref{def:forward_chain_of_atoms}) is called the \emph{main root}.
  \item
    The connected components (if any) of $\overline{V_{c+n-1}}^\perp \setminus \overline{V_{c+n-1}}$
    are called \emph{auxiliary trunks}.
  \item
    The connected components (if any) of $(A^0_{c+n})^\perp \setminus A^0_{c+n}$
    are called \emph{auxiliary stumps}.
  \item The connected components (if any) of $RT^\perp \setminus RT$ are called \emph{auxiliary roots}.
  \end{itemize}
\end{definition}

By definition, the main trunk, stump and
root are non-empty, while auxiliary trunks, stumps and roots can be empty sets.
Note that the decomposition of $S$ into main and auxiliary trunks, stumps and roots
is defined only when $S$ contains singular points.

\begin{figure}[htbp]\label{fig:PartitioningSexamples}
    \includegraphics[scale=0.7]{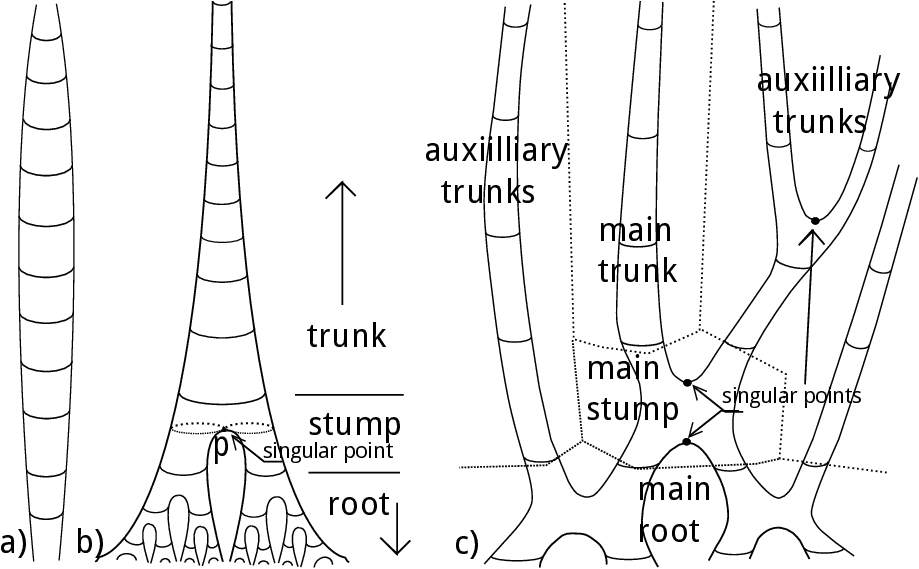}
\caption{
  Pseudo-B\"{o}ttcher components with partitioning.
}
\end{figure}

Figure~\ref{fig:PartitioningSexamples} illustrates examples of partitioned Pseudo-B\"{o}ttcher components:
\begin{itemize}[leftmargin=*]
  \item [a)]
    $S$ without singular points. All atoms are annuli.
  \item [b)]
    $S$ with a single singular point. All atoms are main. The first non-annulus atom is the
stump that marks the end of the trunk and the beginning of the root.
  \item [c)]
    $S$ with multiple singular points.
    The main stamp has two. An auxiliary trunk has one. Note that further
    down, outside the depicted region,
    $S$ may contain additional auxiliary trunks, roots, and singular points.
\end{itemize}

\begin{lemma}\label{lm:main_are_main_auxiliary_are_auxiliary}
  Atoms in the main trunk, stump, and root are main, while atoms in auxiliary trunks, stumps, and roots are auxiliary.
\end{lemma}

\begin{proof}
  It suffices to show that an atom is main if and only if it belongs
  to the main trunk or root (recall that the main stump is part of the
  main root by definition).

  The main trunk $\overline{V_{c+n-1}}$ consists of main atoms due to
  the structure of Definition~\ref{def:pseudobottcher_basin1}.
  The main stump is a main atom by
  Lemma~\ref{lm:main_and_auxiliary_atoms}-\ref{enum:main_and_auxiliary_atoms_1}.
  By Corollary~\ref{cor:backward_chain_is_main}, all atoms in the main
  root are main.

  Consider a main atom $A$ that is not the main stamp and not part of the main trunk.
  The sequence $A, f(A), f(f(A)), ...$ forms a forward chain since $A$ is main.
  As the neighborhood $V_0$ from Definition~\ref{def:pseudobottcher_basin1}
  is restriction of a strictly attracting neighborhood on $S$,
  there exists $M \geq 2$ such that $f^m(A) \in V_0$ for all $m \geq M$.
  In other words, $f^m(A)$ for all $m > M$ belong to the main trunk.
  Since the main stump disconnects the main trunk from the rest of the component $S$,
  the molecule $C = \bigcup_{i \geq 0} f^i(A)$ must contain the main stump.
  Consequently, a portion of $C$ forms a forward chain connecting $A$ to
  the main stump. Then, by definition, $A$ belongs to the main root.
\end{proof}

By definition, any atom $A$ in the main root $RT$ is a preimage of the main stump $A^0_{c+n}$,
i.e., there exists $m \geq 0$ such that $A \subset f^{-m}(A^0_{c+n})$.

\begin{lemma}\label{lm:auxiliary_trunk_root_corellation}
  $S$ has auxiliary trunks if and only if $S$ has auxiliary roots.
\end{lemma}

\begin{proof}
  Suppose $S$ has no auxiliary trunks.
  Then any semi-infinite forward chain eventually enter the main trunk
  and, by definition, all atoms in $S$ are main.

  Suppose $S$ has auxiliary trunks but no auxiliary roots.
  Since any atom of type $(a>1,b>1)$, connected to an auxiliary trunk,
  creates an auxiliary root, the atoms of $S$ should be of type $(1,*)$.
  But this case is prohibited by Lemma~\ref{lm:Y_seq_is_infinite}.
\end{proof}

\begin{lemma}\label{lm:atom_with_1crit_point}
  If the main stump $A^0_{c+n}$ has only one singular point $p$ with defect $n_p-1$,
  then $A^0_{c+n}$ is either of type $(n_p,1)$ or $(1,n_p)$, but not both.
\end{lemma}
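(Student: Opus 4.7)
\emph{Plan.} I plan to prove the statement in three stages, using the Riemann--Hurwitz formula, planarity of atoms, and the cyclic structure of the local branching at $p$.

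First, I would verify that $p$ is the only singular point of $f^n$ inside $A^0_{c+n}$ and compute its ramification index. Since $\Sing(f^n)=\bigcup_{k=0}^{n-1}f^{-k}(\Sing(f))$, any $q\in A^0_{c+n}\cap\Sing(f^n)$ with $q\ne p$ would satisfy $f^k(q)\in\Sing(f)$ for some $1\le k\le n-1$; but then $f^k(q)$ would lie in the main trunk $\overline{V_{c+n-1}}$, which by the inductive construction underlying Lemma~\ref{lm:0critpoint_component_topology} and Corollary~\ref{cor:stop_0critpoint} contains no singular points of $f$. Hence $\Sing(f^n)\cap A^0_{c+n}=\{p\}$. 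By Stoilov's Lemma~\ref{lm:stoilov_lemma}, $f$ is locally $z\mapsto z^{n_p}$ at $p$, and since all intermediate iterates of $p$ are regular, $f^n$ is also locally $z\mapsto z^{n_p}$ at $p$, with ramification index $n_p$.

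Next, I would apply formula~(\ref{eq:RiemannHurvitz_atom}) to the branched covering $f^n\colon A^0_{c+n}\to Q_c$. Since $\chi(Q_c)=0$, this yields $\chi(A^0_{c+n})=-(n_p-1)$. Combined with the planarity of atoms (Lemma~\ref{lm:atom_planar}, Corollary~\ref{cor:any_molecule_planar}), which gives $\chi(A^0_{c+n})=2-(a+b)$, I obtain
\[ a+b=n_p+1. \]

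Finally, I would use the fact that $p$ is the unique preimage of $f^n(p)$ inside the main stump (other preimages of $f^n(p)$ in $S$ lying in separate atoms of the partition, since $A^0_{c+n}$ is the unique atom at generation $n$ adjacent to the main trunk and carries the single local branch): this forces the branched cover $f^n|_{A^0_{c+n}}$ to have degree exactly $n_p$, hence to be a cyclic $\mathbb{Z}/n_p$-cover of $Q_c$ branched at $f^n(p)$. Such covers are classified by a character sending the annulus generator $\alpha\in\pi_1(Q_c\setminus\{f^n(p)\})$ to some $k\in\mathbb{Z}/n_p$ and the small loop $\beta$ around $f^n(p)$ to $1$. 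A standard monodromy count gives $b=\gcd(k,n_p)$ and $a=\gcd(k\pm 1,n_p)$, so $a,b$ are divisors of $n_p$ with $\gcd(a,b)=1$ (since consecutive integers are coprime). Coprimality together with $a\mid n_p$ and $b\mid n_p$ implies $ab\mid n_p$, hence $ab\le n_p$. Assuming both $a\ge 2$ and $b\ge 2$ yields $(a-1)(b-1)=ab-(a+b)+1=ab-n_p\le 0$, contradicting $(a-1)(b-1)\ge 1$. Therefore $\{a,b\}=\{1,n_p\}$, so the type is either $(1,n_p)$ or $(n_p,1)$; since $n_p\ge 2$ these are distinct, giving ``but not both''.

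\emph{Main obstacle.} The subtle point is justifying that the cover $f^n|_{A^0_{c+n}}$ has degree exactly $n_p$ (equivalently, that $p$ is the only preimage of $f^n(p)$ inside $A^0_{c+n}$). The argument leans on the definition of the main stump as the unique atom at generation $n$ adjacent to the main trunk: any other preimages of $f^n(p)$ in $S$ must lie in separate atoms (auxiliary roots or stumps), not in $A^0_{c+n}$ itself. This exclusivity is what upgrades the branched cover to a Galois one of order $n_p$ and makes the gcd classification applicable; without it, Riemann--Hurwitz and planarity alone would still permit intermediate types like $(2,2)$ when $n_p=3$.
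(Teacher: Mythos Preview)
Your first two stages are sound; in particular the conclusion $a+b=n_p+1$ from Riemann--Hurwitz plus planarity is correct and agrees with what the paper also obtains (though note you may work with $f$ rather than $f^n$, since $f$ already sends the stump onto the annulus $Q_{c+n-1}$ inside the trunk). At the decisive step, however, the paper proceeds quite differently: instead of monodromy it analyzes the singular level curve $\gamma_1\subset\tau^{-1}(\tau(p))\cap A^0_{c+n}$. By Stoilov's lemma this is locally $n_p$ arcs through $p$ and globally a bouquet of $n_p$ circles embedded in the planar atom, and the paper reads off the type from how this bouquet separates the atom into regions lying on the two sides of the level $\tau=\tau(p)$.

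Your stage~3 has two genuine gaps. First, the claim that the cover has degree exactly $n_p$---equivalently, that $p$ is the only preimage of $f^n(p)$ inside the stump---does not follow from the stump being the unique atom adjacent to the trunk; uniqueness among atoms at that generation says nothing about uniqueness of preimages \emph{within} that atom, and nothing in the hypotheses excludes additional regular preimages of $f(p)$ lying in $A^0_{c+n}$. Second, even granting degree~$n_p$, the inference ``hence a cyclic $\mathbb{Z}/n_p$-cover'' is false in general. A connected degree-$n_p$ branched cover of the annulus with a single branch point of local index~$n_p$ is governed by a monodromy homomorphism $\rho\colon\langle\alpha,\beta\rangle\to S_{n_p}$ with $\rho(\beta)$ an $n_p$-cycle, but $\rho(\alpha)$ is unconstrained and the image need not be abelian. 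Concretely, for $n_p=3$ take $\rho(\beta)=(123)$ and $\rho(\alpha)=(12)$: the resulting cover is connected, planar (genus~$0$ by the Euler-characteristic count), has a single branch point of index~$3$, yet its boundary type is $(2,2)$ rather than $(1,3)$ or $(3,1)$. Your gcd classification presupposes the cyclic structure and therefore cannot be invoked. The paper's level-curve argument avoids monodromy entirely by working with the intrinsic foliation by~$\tau$.
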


\begin{proof}
  By Stoilov Lemma (\ref{lm:stoilov_lemma}),
  the map $f$ is equivalent to the holomorphic function $z^{n_p}$ at the point $p$,
  up to homeomorphisms in neighborhoods of $p$ and $f(p)$.
  In particular, the regular level curve of the function $\tau$
  passing through $f(p)$ in a neighborhood of $f(p)$
  has $n_p$ preimages in the neighborhood of $p$ that intersect at the point $p$.
  Therefore, $p$ is also a singular point of the function $\tau$ and a
  singular point of the level curve $A^0_{c+n}\cap \tau^{-1}\tau((p))$.
  Topologically, there is only one singular level curve with one
  singular point: a wedge of circles.

  By the choice of $c$ in Section~\ref{sec:atoms}, $c+n-1 < \tau(p) < c+n$.
  Consider the partitioning on the level curves of the function $\tau$ in
  the atom $A^0_{c+n}$.
  $f$ takes $A^0_{c+n}$ onto $Q_{c+n-1}$. Any level curve of $\tau$ in $Q_{c+n-1}$ is a circle.
  $f$ is a local homeomorphism in $A^0_{c+n}$ except for $p$.
  Therefore, in the atom $A^0_{c+n}$ any level set of $\tau$ consists of finite number of circles
  except for the singular level set $\tau^{-1}(\tau(p))$.
  Denote by $\gamma_1$ the connected component of the singular level set
  $\tau^{-1}(\tau(p))$ that belongs to $A^0_{c+n}$.
  $f$ restricted to $\gamma_1$ is a branched covering of the circle
  $\tau^{-1}(\tau(p)+1) \cap Q_{c+n-1}$ by $\gamma_1$ with the branch
  point $p$. Therefore, $\gamma_1$ is a wedge of $n_p$ circles.
  \begin{figure}[htbp]
    \centering
    \includegraphics[scale=0.2]{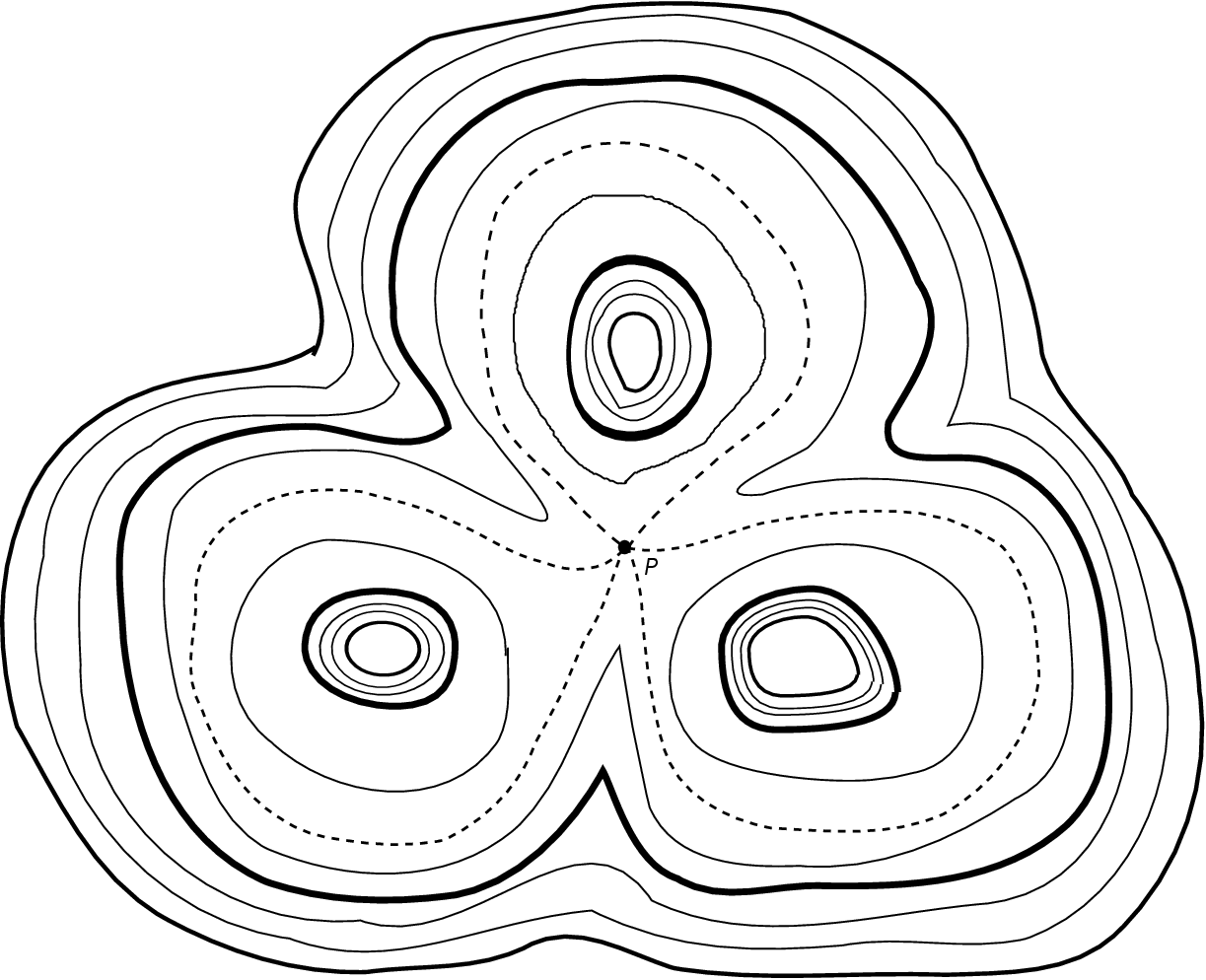}
    \caption{Wedge of three circles in $A^0_{c+n}$ for $n_p$=3.}
    \label{fig:fig3}
  \end{figure}
  As an atom, $A^0_{c+n}$ is a planar surface with boundary
  $\partial A^0_{c+n}$, which is divided by construction into
  $\partial^- A^0_{c+n}$ and $\partial^+ A^0_{c+n}$.
  $\gamma_1$ is embedded in $A^0_{c+n}$. The wedge of $n_p$ circles,
  $\gamma_1$ divide $A_{c+n}$ and, correspondingly, $\partial A^0_{c+n}$
  into $n_p+1$ connected components such that
  $n_p$ components of the boundary belong to one level set of the
  function $\tau$ and one component belongs to another
  level set of $\tau$. In other words, $A^0_{c+n}$ is either of type
  $(1,n_p)$ or of type $(n_p, 1)$.

  As both cases are mutually exclusive, that completes the proof.
\end{proof}

\begin{lemma}\label{lm:1crit_point_atom_type}
  if $S$ has only one singular point of $f$  with defect $n_p-1$
  then $n_p=|\deg(f)|$, and $S$ consists of the main trunk
  and the main root, where the atoms of main root all have type $(1,n_p)$.
\end{lemma}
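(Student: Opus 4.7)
The plan is to identify the type of the main stump $A^0_{c+n}$, then inductively describe the atoms above it in the main root, show that $S$ equals the main trunk together with the main root, and finally count preimages to obtain $n_p = |\deg f|$.

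For the type of $A^0_{c+n}$: Lemma~\ref{lm:atom_with_1crit_point} leaves only $(1, n_p)$ or $(n_p, 1)$, so I would rule out $(n_p, 1)$. If $\partial^- A^0_{c+n}$ had $n_p$ circles, only one could be glued to the unique top circle of the main trunk, so each of the remaining $n_p - 1$ circles would be glued to a distinct atom $D_j$ at the same generation as the main trunk's top atom $B_0$, with $D_j \neq B_0$. Since $p \in A^0_{c+n}$ is the only singular point of $f$ in $S$, Lemma~\ref{lm:atom_planar} makes each $D_j$ and every atom reached from it by descending adjacencies annular. Following such a spatial chain one eventually enters $\overline{V_c}$, in which there is a unique atom per generation by Definition~\ref{def:pseudobottcher_basin1}; Lemma~\ref{lm:one_adjacent_boundary} then forces the descended chain to coincide there with the main trunk's atoms, and propagating this coincidence back up generation by generation gives $D_j = B_0$, a contradiction. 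Hence $A^0_{c+n}$ is of type $(1, n_p)$.

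With the type settled, the structure of the main root follows by induction. Since $\partial^+ A^0_{c+n}$ has $n_p$ circles and atom-atom adjacency uses single boundary circles (Lemma~\ref{lm:one_adjacent_boundary}), there are exactly $n_p$ distinct atoms $E_1, \ldots, E_{n_p}$ glued along them. None contains a singular point of $f$, so Lemma~\ref{lm:atom_ab_type_map_classification}(1-2), applied with target $A^0_{c+n}$ of type $(1, n_p)$ and $n_p \neq 1$, gives that each $E_i$ is of type $(1, n_p)$ and $f|_{E_i} \colon E_i \to A^0_{c+n}$ is a homeomorphism. Iterating this one generation at a time produces the tree of type-$(1, n_p)$ atoms of the main root, each mapped homeomorphically onto the atom directly below. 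Moreover, every boundary circle of the resulting tree (main trunk plus main root) is glued to another tree atom, so by connectedness of $S$ no atom outside the tree can exist, and $S$ coincides with the main trunk plus the main root.

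For the degree equality: all $|\deg f|$ preimages of a regular point $y \in A^0_{c+n}$ lie in $S$ by total invariance, hence in atoms one generation further from the attractor than $A^0_{c+n}$ that map onto $A^0_{c+n}$. By the previous paragraph these are exactly $E_1, \ldots, E_{n_p}$, and each contributes one preimage since $f|_{E_i}$ is a homeomorphism, so $|\deg f| = n_p$. The main obstacle is excluding the case $(n_p, 1)$ for the stump: the technical heart is tracing a hypothetical auxiliary spatial chain of annular atoms from an ``extra'' circle of $\partial^- A^0_{c+n}$ all the way down into $\overline{V_c}$ and then controlling the backward propagation of the forced merger with the main trunk up to the level of the stump.
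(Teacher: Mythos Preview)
Your argument for excluding the $(n_p,1)$ case has a genuine gap. You claim that the descending spatial chain from an ``extra'' atom $D_j$ below the stump ``eventually enters $\overline{V_c}$''. This is not justified, and in the hypothetical $(n_p,1)$ scenario it is actually false. Each $D_j$ starts its own infinite tower of annular atoms $D_j, D_j^{(1)}, D_j^{(2)},\ldots$, and one checks inductively that $D_j^{(k)}\neq B_{-k}$ for every $k$: if $D_j^{(k)}=B_{-k}$, then since $\partial^+ B_{-k}$ is a single circle the unique atom above it is $B_{-(k-1)}$, forcing $D_j^{(k-1)}=B_{-(k-1)}$ and ultimately $D_j=B_0$. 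So the $D_j$-tower never merges with the main trunk and never enters $\overline{V_c}$; it is a separate connected component of $\tau^{-1}((-\infty,c+n-1])$. Your contradiction therefore does not materialise.

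The paper rules out $(n_p,1)$ differently, by looking \emph{above} the stump rather than below it. Since the stump has $\partial^+$ a single circle, there is a unique main atom above it, and by Lemma~\ref{lm:atom_ab_type_map_classification} every atom in the backward main chain through the stump is again of type $(n_p,1)$. One then invokes Lemma~\ref{lm:Y_seq_is_infinite}: a main backward chain whose base atom is of type $(1,*)$ (here the top of the trunk) and whose subsequent atoms are all of type $(a_n,1)$ with $a_n>1$ forces the covering degree of $f$ on successive atoms to grow without bound, producing infinitely many singular points. This contradicts the hypothesis of a single singular point (and, more fundamentally, the finiteness of $\Sing(f)$ on a compact surface).

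The remainder of your proof---the inductive description of the main root via Lemma~\ref{lm:atom_ab_type_map_classification}(1-2), the argument that the trunk-plus-root tree exhausts $S$ by connectedness, and the preimage count giving $n_p=|\deg f|$---is correct and in fact more explicit than the paper's treatment.
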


\begin{proof}
  By Corollary~\ref{cor:stop_0critpoint},
  the singular point of $S$ is located within
  the main stamp $A^0_{c+n}$.
  By Lemma~\ref{lm:atom_with_1crit_point},
  $A^0_{c+n}$ is either of type $(n_p,1)$ or $(1,n_p)$, but not both.
  According to Lemma~\ref{lm:atom_ab_type_map_classification}, since $n_p>1$,
  all atoms in $f^{-m}(A^0_{c+n})$ for $m\ge 1$ are homeomorphic to $A^0_{c+n}$
  in the absence of other singular points, and thus have the same type.
  If $A^0_{c+n}$ is of type $(n_p,1)$, we obtain the
  situation of Lemma~\ref{lm:Y_seq_is_infinite}, which is forbidden.
  Therefore, $A^0_{c+n}$ is of type $(1,n_p)$,
  and all atoms of the main trunk are also of type $(1,n_p)$.

  Since $f$ is an inner mapping of manifolds, $\deg(f)$ is constant
  and equals the number of preimages in each point
  of $M\setminus \Sing(f)$ \cite{Trokhimchuk2008__en}.
  As $S$ is a totally invariant set,
  the number of local preimages at any point in $S$ equals
  the global number of preimages, implying $n_p=|\deg(f)|$.
\end{proof}

Note that components of strict wandering with a single singular point
permitted in Lemma~\ref{lm:1crit_point_atom_type} do exist.
Basin of attraction toward infinity in the holomorphic map $z^{n_p}+1$
is the example. 

\section{Topology of pseudo-B\"{o}ttcher components of strict wandering}
\label{sec:pseudo_bottcher_topology}

Let $S$ be a pseudo-B\"{o}ttcher component of strict wandering.
Since $S$ is an open subset of the compact 
surface $M$, then
$S$ is non-compact surface without boundary.
Let's find the topological type of the surface $S$.


Non-compact surfaces are classified in papers
\cite{Richards63_zbMATH03251274,MishchenkoPryshlyak2004_zbMATH02169042,
MishchenkoPryshlyak2007_zbMATH05158692,MishchenkoPryshlyak2009_zbMATH05697575}.

Let's recall the corresponding definitions following the paper~%
\cite{MishchenkoPryshlyak2007_zbMATH05158692}.

\begin{definition}
  A nested set sequence $A_1 \supset A_2 \supset A_3 \supset \dots$
  \emph{majorizes} another nested set sequence
  $B_1 \supset B_2 \supset B_3 \supset \dots$ if
  for every set $A_n$ in the first sequence, there exists a set $B_N$
  in the second sequence such that $B_N \subset A_n$.
\end{definition}

\begin{definition}
  Let $K_1 \supset K_2 \supset K_3 \supset \dots$ be a nested sequence of
  connected unbounded regions in a surface $S$ such that:
  \begin{enumerate}[leftmargin=2em, label={\rm\alph*)}]
  \item[a)]
    the boundary of $K_n$ in $S$ is compact for all $n$;
  \item[b)]
    for any bounded subset $A$ of $S$,
    there exists a sufficiently large $n$ such that $K_n \cap A = \emptyset$.
  \end{enumerate}
  Then $K_1 \supset K_2 \supset K_3 \supset \dots$ is called an \emph{end-representing sequence}
  (or \emph{end representation}).
\end{definition}

\begin{definition}
  Two end-representing sequences $K_1 \supset K_2 \supset K_3 \supset \dots$
  and $Q_1 \supset Q_2 \supset Q_3 \supset \dots$ are equivalent if
  each sequence majorize the other.
\end{definition}

\begin{definition}
The equivalence class of end-representing sequences containing $k = K_1 \supset K_2 \supset K_3 \supset \dots$ is
called an end and denoted by $k^*$.
\end{definition}

\begin{definition}
  Let $k^*$, represented by $k = K_1 \supset K_2 \supset K_3 \supset \dots$ be an end of $S$.
  We say that $k^*$ is planar and$\backslash$or orientable if the sets
  $K_n$ are planar and$\backslash$or orientable for all sufficiently large $n$.
\end{definition}

\begin{definition}
  The ideal boundary (the set of ends) $B(S)$ of a surface $S$ is a topological
  space having the ends of $S$ as elements and endowed with the following topology: for any
  set $U$ in $S$ whose boundary in $S$ is compact, we define $U^*$ to be the set of all ends $k^*$,
  represented by some $k = K_1 \supset K_2 \supset K_3 \supset \dots$
  such that $K_n \subset U$ for a sufficiently large $n$; we take the
  set of all such $U^*$ to be a basis for the topology of $B(S)$.
\end{definition}

A triplet $B(S) \supset B'(S) \supset B''(S)$, where $B(S)$ is the
whole ideal boundary, $B'(S)$ is the part which is not planar,
and $B''(S)$ is the part which is not orientable, is a topological
invariant of $S$ up to a compact subsurface.

From Lemma~\ref{lm:finite_molecule_planar} and the fact that
$S$ is embedded in the compact surface $M$,
$S$ is a planar surface and $B'(S) = B''(S) = \emptyset$.
Then,
according to the paper~\cite{Richards63_zbMATH03251274},
the topological type of $S$ is defined 
by the topology of the set $B(S)$.

Using the function $\tau\colon S\to \rr$ from Lemma~\ref{lm:pseudo_Butcher_timeline_coordinate}.
define the sets $\alpha_n:=\tau^{-1}((-\infty,-n))$,
$B_n:=\tau^{-1}([-n,n])$, $\omega_n:=\tau^{-1}((n,\infty))$.
Intuitively, each $\omega_n$ can be seen as
a neighborhood of the repeller $R$ in $S$, while $\alpha_n$ is a
neighborhood of the set $A^\perp$ (the neutral saturation of the attractor
$A$) within $S$.
By definition, $S=\alpha_n\cup B_n \cup \omega_n$ for each $n> 0$.

The sequence $(B_n)_{n > 0}$ consists of bounded sets in $S$, while
$(\alpha_n)_{n > 0}$ and $(\omega_n)_{n > 0}$ are its complimentary
sequences.
Although it's possible for
$(\alpha_n)_{n>0}$ and $(\omega_n)_{n>0}$ to be end-representing sequences
if the sets $\alpha_n$ and $\omega_n$ are connected for all $n >
0$, this is generally not the case.
However, by construction,
for any representation 
$k = K_1 \supset K_2 \supset K_3 \supset \dots$
of any end $k^*$ of $S$,
the sequence
$(K_n)_{n>0}$ is majorized by either $(\alpha_n)_{n>0}$ or $(\omega_n)_{n>0}$.

\begin{definition}
  If representations of an end $k^*$ are majorized by the sequence $(\alpha_n)$,
  we say that $k^*$ belongs to the set $AIB$ (Attractor{-}side Ideal Boundary).
\end{definition}

\begin{definition}
  If representations of an end $k^*$ are majorized by the sequence $(\omega_n)$,
  we say that $k^*$ belongs to the set $RIB$ (Repeller{-}side Ideal Boundary).
\end{definition}

By definition,
the ideal boundary $B(S)$ is the union of $AIB$ and $RIB$.

Both sets
$AIB$ and $RIB$ are non-empty, as we can construct an end from
each sequence $(\alpha_n)$ and $(\omega_n)$ by arbitrarily choosing
a connected component from $\alpha_n$ and $\omega_n$ for each $n> 0$.
In particular, $AIB$ always contains an end $\alpha^*$ generated by the sequence
$V_0 \supset V_{-1} \supset ... \supset V_{-n} \supset ...$,
where $V_i=\tau_0^{-1}(-\infty,i))$ are neighborhoods of the attractor $A$.

The set $V_0$ is homeomorphic to an open annulus, implying that the end $\alpha^*$
is an isolated point in the ideal boundary $B(S)$.

From Lemma~\ref{lm:one_adjacent_boundary}, if $S$ has no singular points of $f$
then $B(S)$ consists of two ends, one from $AIB$ and the other from $RIB$.

\begin{lemma}\label{lm:atoms_yuild_ends_infinity}
  If $S$ has an atom of type $(k,*)$, $k>1$, then $AIB$ is a zero-dimensional infinite set.
  If $S$ has an atom of type $(*,k)$, $k>1$, then $RIB$ is a zero-dimensional infinite set.
\end{lemma}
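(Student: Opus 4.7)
The plan is to identify $AIB$ with the set of infinite descending paths in the tree of connected components of the sub-level sets $\alpha_n$, and then to show that this tree has branching at arbitrarily deep levels. Let $\Sigma_n$ denote the set of connected components of $\alpha_n=\tau^{-1}((-\infty,-n))$. Since $\alpha_{n+1}\subset\alpha_n$, containment induces canonical surjections $\Sigma_{n+1}\to\Sigma_n$, and the definition of $AIB$ realises it as the inverse limit $\varprojlim\Sigma_n$. Each $\Sigma_n$ is discrete, so this inverse limit is automatically zero-dimensional; equivalently, $AIB$ sits inside the totally disconnected end-space $B(S)$, which takes care of the zero-dimensional half of the claim.

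The key observation is that every atom $A'$ of type $(a,b)$ with $a>1$ at a generation $m'$ causes a genuine split in this tree: for $n$ slightly exceeding $m'-c$, the component of $\alpha_n$ lying immediately below $\partial^- A'$ breaks into at least $a$ distinct components, because the $a$ disjoint circles of $\partial^- A'$ cannot be rejoined in $S\setminus A'$ on the attractor side without creating a handle, while $S$ is planar by Corollary~\ref{cor:any_molecule_planar}. Thus, to conclude that $AIB$ is infinite it suffices to produce atoms of type $(a,*)$ with $a>1$ at arbitrarily high generations.

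To produce such atoms I would iterate the given atom $A$ under $f$. By Lemma~\ref{lm:atom_ab_type_map_classification}, the boundary types of $f^j(A)$ are componentwise non-increasing, and strict decreases can occur only at iterates that contain singular points of $f$. Since $\Sing(f)$ is finite while $\{f^j(A)\}_{j\ge 0}$ are pairwise distinct atoms (each at its own generation $m_0+j$), the type stabilises after finitely many iterates to some pair $(a_\infty,b_\infty)$. If $a_\infty\geq 2$, every $f^j(A)$ for large $j$ is already a branching atom at generation $m_0+j$, producing infinitely many splits in the tree and hence infinitely many ends in $AIB$.

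The hard part will be the case $a_\infty=1$, in which the main forward chain eventually becomes $(1,*)$-annular. Here I would exploit the fact that at each of the finitely many atoms $A_{s_i}$ of the orbit where the $a$-component strictly dropped, there are $a_{s_i}-1$ auxiliary atoms branching off at generation $m_0+s_i+1$; applying the same iteration to representatives of these auxiliary branches and invoking an argument in the spirit of Lemma~\ref{lm:Y_seq_is_infinite} to preclude every auxiliary branch from ultimately collapsing to type $(1,*)$ (which would otherwise demand an infinite supply of singular points) still yields branching atoms at arbitrarily high generations. The second assertion about $RIB$ is dual and simpler: preimages of a $(*,k)$-atom without singular points are themselves of type $(*,k)$ by Lemma~\ref{lm:atom_ab_type_map_classification}, so iterating $f^{-1}$ directly produces infinitely many branching atoms on the repeller side at arbitrarily low generations, with no collapse phenomenon to worry about.
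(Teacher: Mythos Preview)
Your zero-dimensionality argument and your treatment of $RIB$ via preimages are fine and match the paper's approach. The gap is in the $AIB$ half, where you iterate the given atom \emph{forward} under $f$. Recall that $\tau(f(x))=\tau(x)-1$, so forward iterates of $A$ move toward the attractor, not toward the repeller; in particular $f^j(A)$ eventually lands inside $\overline{V_c}$, where every atom is the annulus $Q_{c-m}$ of type $(1,1)$. Hence the stabilised type is always $(a_\infty,b_\infty)=(1,1)$: your ``easy case'' $a_\infty\ge 2$ is vacuous, and the ``hard case'' is the only case. The sketch you give there does not rescue the argument: the auxiliary atoms that branch off at each drop of the $a$-coordinate are themselves atoms adjacent to the forward orbit on the attractor side, and forward-iterating any of them again drives it into $V_c$ and collapses it to $(1,1)$ after finitely many steps. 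Lemma~\ref{lm:Y_seq_is_infinite} does not help here---it concerns a main \emph{backward} chain whose covering degree is forced to grow without bound, which is a different configuration.

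The fix is to do for $AIB$ exactly what you already do for $RIB$: take preimages. If $A$ has type $(k,*)$ with $k>1$, then for every component $A'\subset f^{-n}(A)$ the restriction $f^n\colon A'\to A$ is a surjective branched covering with no branch points on $\partial A'$, so each of the $k$ circles of $\partial^- A$ has at least one preimage circle in $\partial^- A'$; thus $A'$ has type $(k',*)$ with $k'\ge k>1$. This is precisely how the paper argues (invoking Lemma~\ref{lm:atom_ab_type_map_classification}): one obtains branching atoms of type $(k',*)$, $k'>1$, at arbitrarily large generations, and since each such atom disconnects the planar set $S$ into at least $k'$ components on the attractor side (Corollary~\ref{cor:any_molecule_planar}), this yields infinitely many ends in $AIB$.
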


\begin{proof}
  By definition,
  any open unbounded connected subset of $S$ is a neighborhood in $B(S)$
  and contains at least one point in $B(S)$.

  Consider the oriented Conrod-Reeb graph $\Gamma$ of the function
  $\tau$ (a factor space of $S$ by connected components of $\tau$'s
  level curves).
  According to Lemma~\ref{lm:finite_molecule_planar}, $\Gamma$ is a graph without
  cycles, a tree.
  An atom of type $(p,q)$ corresponds to a bounded area of $\Gamma$ with $q$
  incoming edges and $p$ outgoing edges.
  Ignore atoms of type $(1,1)$ as they correspond to simple
  segments of $\Gamma$.

  An atom of type $(p,q)$ divides $\Gamma$ into
  $p+q$ subgraphs that correspond to open unbounded connected subsets of $S$.
  Of these subgraphs, $q$ are mapped by $\tau$ to intervals of the form $[const,\infty)$,
  corresponding to non-empty neighborhoods in $RIB$ and hence at
  least $q$ ends in $RIB$.
  Similarly, $p$ subgraphs are mapped by $\tau$ to intervals of the form $(-\infty, const]$,
  corresponding to non-empty neighborhoods in $AIB$ and at
  least $p$ ends in $AIB$.

  If there are $k$ atoms of type $(p,*)$, there are at least $k(p-1)$
  distinct ends in $AIB$.
  Similarly, for $k$ atoms of type $(*,q)$, there are at least $k(q-1)$
  distinct ends in $RIB$.
  According to Lemma~\ref{lm:atom_ab_type_map_classification},
  if there is an atom $A$ of type $(p,*)$, $p>1$, then each preimage of $A$ is
  an atom of type $(p',*)$, where $p'\ge p$.
  Analogously, for an atom $A$ of type $(*,q)$, $q>1$, each preimage of $A$ is
  an atom of type $(*,q')$, where $q'\ge q$.

  Since every atom has infinitely many preimages in $S$, there are infinitely
  many ends in $AIB$ if $p>1$ and in $RIB$ if $q>1$.

  A topological space is zero-dimensional if every open cover 
  has a refinement which is a cover by disjoint open sets.
  For $AIB$ and $RIB$ this refinement is provided by connected components
  of $(\alpha_n)$ and $(\omega_n)$ correspondingly.
  This completes the proof.
\end{proof}

As a consequence of Lemmas~\ref{lm:1crit_point_atom_type},
\ref{lm:atoms_yuild_ends_infinity},
we got

\begin{lemma}\label{lm:1crit_point_component_topology}
  If $S$ has only one singular point of $f$
  then $|AIB|=1$ and $|RIB|=\infty$.
\end{lemma}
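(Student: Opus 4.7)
The plan is to combine Lemma~\ref{lm:1crit_point_atom_type} with Lemma~\ref{lm:atoms_yuild_ends_infinity} to get $|RIB|=\infty$ directly, and then carry out a short tree-theoretic argument to establish $|AIB|=1$.

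First, by Lemma~\ref{lm:1crit_point_atom_type}, whenever $S$ has exactly one singular point of $f$, the set $S$ decomposes as the main trunk, which is a chain of type-$(1,1)$ annular atoms converging to the attractor $A$, together with the main root, whose atoms are all of type $(1,n_p)$ with $n_p=|\deg f|>1$. In particular, the main stump is an atom of type $(*,n_p)$ with $n_p>1$, so the second half of Lemma~\ref{lm:atoms_yuild_ends_infinity} immediately yields that $RIB$ is a zero-dimensional infinite set, whence $|RIB|=\infty$.

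For $|AIB|=1$, the end $V^{*}$ represented by the shrinking annular neighborhoods $V_{-n}$ is always present in $AIB$ and, as already noted in the preamble, is an isolated point of $B(S)$. To rule out any other end in $AIB$, I would exploit the Conrod--Reeb tree $\Gamma$ of the function $\tau$: it is a tree by Lemma~\ref{lm:finite_molecule_planar}. Since every atom of $S$ has exactly one internal boundary circle -- trunk atoms are of type $(1,1)$ and root atoms of type $(1,n_p)$ -- at every vertex of $\Gamma$ there is a unique downward edge. Consequently, for $n$ larger than the greatest $|\tau|$-value attained on the main stump, the set $\alpha_n=\tau^{-1}((-\infty,-n))$ is contained in the main trunk, and as a $\tau$-sublevel piece of a connected open annulus it is itself connected. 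Any end in $AIB$ is represented by a nested sequence eventually contained in some such $\alpha_n$; by connectedness, this sequence is equivalent to $(V_{-n})$, so $V^{*}$ is the only AIB-end.

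The main obstacle is this uniqueness argument for $AIB$. Showing that the $n_p$ upward branches at every root atom never loop back into the low-$\tau$ region relies crucially on planarity of finite molecules (Lemma~\ref{lm:finite_molecule_planar}), which is what forces $\Gamma$ to be a genuine tree rather than a graph with cycles. Once the picture of $\Gamma$ as a ``trunk ray descending to the attractor, with an $n_p$-ary subtree branching upward toward the repeller'' is secured, the uniqueness of the AIB-end, and with it the lemma, is immediate.
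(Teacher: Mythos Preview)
Your proposal is correct and follows essentially the same route as the paper: the paper's proof is the single sentence ``As a consequence of Lemmas~\ref{lm:1crit_point_atom_type} and~\ref{lm:atoms_yuild_ends_infinity}'', and you invoke precisely these two lemmas. Your extra paragraph on $|AIB|=1$ just unpacks what the paper leaves implicit in the structural statement of Lemma~\ref{lm:1crit_point_atom_type} (that $S$ is the main trunk plus the main root, with every root atom of type $(1,n_p)$), so the only attractor-side end is the one carried by the trunk.
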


\begin{lemma}\label{lm:AIB_ends_are_isolated}
  Any end that belongs to $AIB$ is an isolated point in $B(S)$.
\end{lemma}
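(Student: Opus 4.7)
The plan is to show that a well-chosen connected component $K_n$ of $\alpha_n$ representing $k^*$ is an open annulus once $n$ is large, so that the basic open set $K_n^* \subset B(S)$ reduces to $\{k^*\}$ and hence $k^*$ is isolated. I would start by choosing the representation $K_1 \supset K_2 \supset \ldots$ so that each $K_n$ is a connected component of $\alpha_n$, restricting to the cofinal set of $n$ with $-n \notin \tau(O(\Sing(f)))$; by the discreteness of this image set, $\tau^{-1}(-n)$ is then a regular level, and each boundary $\partial K_n$ in $S$ is a finite disjoint union of circles.

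Next I would analyze the limit set $L := \bigcap_n \overline{K_n} \subset M$. This is a nonempty, compact, connected set. Because $R$ lies in $\omega_m$ for $m$ large while $\alpha_n$ and $\omega_m$ are eventually disjoint, $L \cap R = \emptyset$; and because $K_n$ exits every compact subset of $S$, one has $L \subset \overline{S}\setminus S$. Using the disjoint decomposition $M = S \sqcup A^\perp \sqcup R$ one gets $L \subset A^\perp$. Since $A^\perp$ is a disjoint union of connected compacta (the attractor $A$ and connected components of its strict preimages $f^{-k}(A)$), connectedness of $L$ pins it down to a single component $A'$, with $A' \subset f^{-k}(A)$ for some minimal $k \geq 0$.

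The heart of the proof is the construction of an open annular neighborhood of $A'$ inside $S$. For $k=0$ (so $A' = A$), the set $V_{-N} = \tau_0^{-1}((-\infty,-N))$ works directly by Definition~\ref{def:pseudobottcher_basin1}. For $k \geq 1$, since $\Sing(f^k) = \bigcup_{j=0}^{k-1} f^{-j}(\Sing(f))$ is a finite set, one can choose a neighborhood $W$ of $A'$ in $M$ small enough that $W$ meets $\Sing(f^k)$ only at $A'$ and is disjoint from all other components of $A^\perp$. The restriction $f^k|_W$ is then a branched cover with branching confined to $A'$; by the Stoilov Lemma applied to $f^k$ at $A'$, it is locally conjugate to $z \mapsto z^d$ for some $d \geq 1$. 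After further shrinking so that $f^k(W \setminus A') \subset V_0$, the set $W \cap S = W \setminus A'$ is the preimage of an annular neighborhood of $A$ under $z \mapsto z^d$, hence itself an open annulus.

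Finally, Hausdorff convergence $\overline{K_n} \to L \subset A'$ gives $\overline{K_n} \subset W$ for $n$ sufficiently large; then $K_n$ is an open connected subset of the annulus $W \cap S$ whose boundary in $W \cap S$ is a finite union of circles on the regular level $\tau=-n$, which forces $K_n$ to be an open annulus whose single $S$-end is exactly $k^*$. The main obstacle I anticipate is the construction of $W$ in the previous paragraph: one must rule out that stray preimages of points of $\Sing(f)$ under various $f^{-j}$, $j<k$, accumulate at $A'$ from within $S$, and that any other component of $A^\perp$ clusters against $A'$. Both follow from the finiteness of $\Sing(f)$ combined with the fact that inner mappings are finite-to-one branched coverings, but they require careful bookkeeping of preimage orbits near $A'$.
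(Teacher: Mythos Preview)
Your overall target---show that a representing neighbourhood $K_n$ is an open annulus for large $n$---is correct and is also what the paper establishes. The gap is in the step where you build the annular neighbourhood of $A'$. You invoke the Stoilov Lemma for $f^k$ ``at $A'$'' and conclude that $f^k$ is locally conjugate to $z\mapsto z^d$ there. But the Stoilov Lemma (Lemma~\ref{lm:stoilov_lemma}) is a local normal form at a \emph{single point}; it says nothing about the behaviour of an inner mapping near an arbitrary compact set. In the present setting the attractor $A=\bigcap_{n\ge 0}f^n(U_0)$ is only constrained by the requirement that $U_0\setminus A$ be an open annulus, so $A$---and hence any component $A'$ of $f^{-k}(A)$---may well be a nondegenerate continuum rather than a point. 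Without $A'$ being a point, the sentence ``$W\setminus A'$ is the preimage of an annular neighbourhood of $A$ under $z\mapsto z^d$, hence itself an open annulus'' has no force. The bookkeeping worry you raise at the end is secondary to this structural problem.

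The paper avoids $A^\perp$, limit sets, and Hausdorff convergence altogether, working only with the global function $\tau$ of Lemma~\ref{lm:pseudo_Butcher_timeline_coordinate}. One picks $N$ with $N+c<\min_{p\in\Sing(f)}\tau(p)$; then every connected component $M$ of $\tau^{-1}\bigl((-\infty,N+c)\bigr)$ is carried by some iterate $f^m$ onto the annulus $V_{N+c-m}$, and by the choice of $N$ no singular point of $f$ (hence of $f^m$) meets $M$, so $f^m|_M$ is an unbranched finite covering and $M$ is itself an open annulus by Riemann--Hurwitz. Each such $M$ is then a basic open set in $B(S)$ containing exactly one end. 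Your route can be repaired by replacing the Stoilov step with precisely this covering argument---but once that is done, the detour through $L$ and $A'$ becomes superfluous.
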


\begin{proof}
  Select $N\in\zz$ such that $N+c<\mathrm{min}_{p\in\Sing(f)}(\tau(p))$.
  Consider a connected component $T$ of the set $\tau^{-1}((-\infty,N+c))$.
  Either $T$ is $V_{N+c}$ or there exists $m\in\nn$ such that $f^m(M)=V_{N-m+c}$,
  since $V_{N+c}$ is restriction of a strictly attracting neighborhood of the attractor.

  In the first case, $V_{N+c}$ is homeomorphic to an open annulus by definition.
  In the second case, by the choice of $N$, $T$ contains no
  singular points of $f$ or $f^m$, and $f^m$ is a regular covering
  of $V_{N-m+c}$ by $T$.
  Thus, by Riemann-Hurvitz formula, $T$ is also an open annulus.

  As an unbounded connected set and an open annulus, each $T$
  represents both a single end in $AIB$ and its open neighborhood in $B(S)$
  containing no other ends of $S$. This proves the lemma.
\end{proof}


After choosing the base annulus $Q_c$ and fixing the partitioning of $S$ into atoms,
it is convenient to use end-representing sequences
consistent with the partitioning of $S$.

Since we can ignore any finite part or finite subsequence in these
semi-infinite sequences, for convenience, we align the sequences
to start with generation 0 atoms (Definition~\ref{def:atom_comp_tau}).

\begin{definition}
  A representation $W_0 \supset W_1 \supset W_2 \supset \dots$ of an end $w^*\in AIB$
  is called \emph{canonical}
  if, $\forall i\ge 0$,
  $W_i$ is a connected component of the set
  $\tau^{-1}\left(\left(-\infty,c-i\right]\right)$.
\end{definition}

\begin{definition}
  A representation $W_0 \supset W_1 \supset W_2 \supset \dots$ of an end $w^*\in RIB$
  is called \emph{canonical}
  if, $\forall i\ge 0$,
  $W_i$ is a connected component of the set
  $\tau^{-1}\left(\left[c-1+i,\infty\right)\right)$.
\end{definition}

By construction, each set $W_i$ is a molecule.
For any end $k^*$ its
canonical representation is uniquely determined
and depend only on the choice of the constant $c$ in partitioning and
the end itself.

Canonical representations of the ends in $AIB$ are simple.
In the canonical representation of the attractor's end $\alpha^*\in AIB$,
$\overline{V_c} \supset \overline{V_{c-1}} \supset ... \supset \overline{V_{c-n}} \supset ...$,
each $\overline{V_{c-i}}=\tau_0^{-1}(-\infty,c-i])$ is main forward semi-infinite chain.
Any other end in $AIB$ has canonical representation consisting of
preimages of subchains of the main forward semi-infinite chain $\overline{V_c}$.

For the ends in $RIB$
the only problem with canonical representations is topological
complexity of their molecules $W_i$.

\begin{definition}
A semi-infinite backward chain $C=\bigcup_{i\geq 0}A_i$ such that $\tau(C)=[c-1,\infty)$
is called \emph{canonical}.
\end{definition}

Each canonical backward chain $(A_i)_{i\geq 0}$
naturally define a
canonical representation of an end $w^*\in RIB$
$W_0 \supset W_1 \supset W_2 \supset \dots$, where
$W_i$ is a connected component of the set
$\tau^{-1}\left(\left[c+i,\infty\right)\right)$ containing $A_i$.

Conversely, if we have a canonical representation
$w = W_0 \supset W_1 \supset W_2 \supset \dots$
of an end $w^*\in RIB$, then
each $W_i$ also contains at least one semi-infinite backward chain.
From connectivity, one can expect to have a canonical semi-infinite backward
chain such that each $W_i$ contains its subchain.

\begin{definition}
  A canonical representation $W_0 \supset W_1 \supset W_2 \supset \dots$ of an end $w^*\in RIB$
  contains a canonical backward chain $C=\bigcup_{i\geq 0}A_i$
  if for all $i\geq 0$ $W_i\cap C =\bigcup_{j\geq i}A_j$.
\end{definition}

\begin{lemma}\label{lm:canonical_sequence_exists_chain}
  A canonical representation $W_0 \supset W_1 \supset W_2 \supset \dots$ of an end $w^*\in RIB$
  always contains at least one canonical backward chain $C=\bigcup_{i\geq 0}A_i$.
\end{lemma}

\begin{proof}
  Suppose all atoms in $W_0$ are of type $(1,1)$.
  Then $W_0$ itself is the required chain $C$.

  Suppose all atoms in $W_0$ are of type $(*,1)$.
  Start the chain $C$ by arbitrarily choosing an atom $A_0$ from $W_0 \setminus W_1$.
  Since $A_0$ is of type $(*,1)$, there is unique atom $A_1$ connected to $\partial^+ A_0$.
  Continuing this process infinitely, we obtain $C$.

  Suppose all atoms in $W_0$ are of type $(1,*)$.
  For any $i\geq 0$, the set $W_{i} \setminus \tau^{-1}\left(\left[c+i+1,\infty\right)\right)$ consists of a
  single atom $A_i$.
  Together, the atoms $A_i$ form the required chain $C$.

  Consider the general case.
  For all $i\geq 0$ and $j>0$, denote by $R(i,j)$ the set of atoms from $W_i \setminus W_{i+1}$
  reachable from $W_{i+j}$ by a forward chain.
  By arbitrarily choosing an atom in $W_{i+j}$ and arbitrarily continuing
  a forward chain of the length $j+1$, we will obtain an atom in $W_i$
  due to connectivity.
  Therefore, the sets $R(i,j)$ are not empty for all $i\geq 0$ and $j>0$.
  By definition, $R(i,j+1)\subseteq R(i,j)$.
  Let $R(i):=\bigcap_{j>0} R(i,j)$.
  For all $i\geq 0$ the sets $R(i)$ are not empty as intersections of non-empty nested sets.


  Any atom of $W_i$ adjacent to an atom from $R(i+1)$ belongs to
  $R(i)$ by definition as they form a chain. Therefore,
  any atom of $R(i)$ has an adjacent atom that belongs to $R(i+1)$.

  Start the chain $C$ by arbitrarily choosing an atom $A_0$ from $R(0)$.
  Inductively, given an atom $A_i\in R(i)$, 
  there exists an atom $A_{i+1}\in R(i+1)$ connected to $\partial^+ A_i$.
  Continuing this process infinitely, we obtain $C$.
\end{proof}

\begin{lemma}\label{lm:canonical_sequence_two_chains_stabilize}
  If a canonical representation $W_0 \supset W_1 \supset W_2 \supset \dots$ of an end $w^*\in RIB$
  contains two canonical backward chains $C_1=\bigcup_{i\geq 0}A^1_i$
  and $C_2=\bigcup_{i\geq 0}A^2_i$, then there exists $J\geq 0$ such
  that their subchains $C^J_1=\bigcup_{i\geq J}A^1_i$ and $C^J_2=\bigcup_{i\geq J}A^2_i$
  coincide: $C^J_1=C^J_2$.
\end{lemma}

\begin{proof}
  Suppose there exists $i\geq 0$ such that subchains $C^i_1:=\bigcup_{j\geq i}A^1_j$ and
  $C^i_2:=\bigcup_{j\geq i}A^2_j$ do not intersect.
  $W_i$ is connected set which contains both $C^i_1$ and $C^i_2$.
  Therefore, there exists a bridge $B$ in $W_i$
  that joins $C^i_1$ and $C^i_2$.
  Since by construction the bridge $B$ is a finite molecule (connected set of atoms),
  there exists $k>i$ such that $W_k\cap B=\emptyset$.

  By Corollary~\ref{cor:atom2disconnect},
  any atom disconnects $S$ into multiple connected components.
  In particular, atoms of $B$ disconnect $W_i$ into connected components
  such that there are separated connected components for each $C^i_1$ and $C^i_2$.
  By definition of canonical representation, $W_k$ is a connected component.
  Therefore, only one of the subchains $C^i_1$ and $C^i_2$ can be contained in $W_k$.
  We got a contradiction.

  As a result, the chains $C_1$ and $C_2$ intersect.
  Suppose, after intersection they do not coincide, but eventually diverge.
  Then there are two possibilities: either they no more intersect,
  or they later intersect again. First possibility leads to
  contradiction as shown before.
  Second possibility creates a handle in the union of $C_1$ and $C_2$,
  which contradicts Lemma~\ref{lm:finite_molecule_planar}.
  Both possibilities end with contradiction, hence, $C_1$ and $C_2$
  eventually coincide.
\end{proof}

\begin{definition}\label{def:main_auxiliary_canonical_representation}
  A canonical representation $W_0 \supset W_1 \supset W_2 \supset \dots$ of an end $w^*\in RIB$
  is called \emph{main},
  if it contains a main canonical backward chain.
  Otherwise, it is called \emph{auxiliary}.
\end{definition}

\begin{lemma}\label{lm:main_canonical_sequence_to_chain}
  Any main canonical representation
  $w = W_0 \supset W_1 \supset W_2 \supset \dots$ of an end $w^*\in RIB$
  contains 
  only one main canonical backward chain $C=\bigcup_{i\geq 0}A_i$.
\end{lemma}

\begin{proof}
  Suppose $w$ contains two main canonical backward chains $C_1=\bigcup_{i\geq 0}A^1_i$
  and $C_2=\bigcup_{i\geq 0}A^2_i$.
  By Lemma~\ref{lm:canonical_sequence_two_chains_stabilize},
  there exists $J\geq 0$ such
  that their subchains $C^J_1=\bigcup_{i\geq J}A^1_i$ and $C^J_2=\bigcup_{i\geq J}A^2_i$
  coincide: $C^J_1=C^J_2$.
  In particular, $A:=A^1_J=A^2_J$ is their common main atom.
  By Lemma~\ref{lm:train_of_atoms0}, the only main forward chain of
  the length $J$ that starts in $A$ is the chain $(A$, $f(A)$, \dots, $f^J(A))$.
  That means that $C_1$ and $C_2$ coincide.
\end{proof}

Any main canonical representation $w$ contains a unique main canonical backward chain $C$.
Any other canonical backward chain, contained in $w$, is a mixed chain.
Let us define designated chains for auxiliary canonical
representations too.

\begin{definition}\label{def:premain_auxiliary_canonical_backward_chain}
  An auxiliary semi-infinite backward chain $C=\bigcup_{i\geq 0}A_i$ is
  called \emph{pre{-}main}, if there exists $n\geq 1$ such that
  for each $0 < i < n$
  $f^i(C)$ is an auxiliary semi-infinite backward chain, but
  $f^n(C)$ is a main semi-infinite backward chain.
\end{definition}

Note that any chain will eventually be mapped by $f$ to a main chain.
However, not any auxiliary chain is pre{-}main:
some auxiliary chains will be mapped to a mixed chain first.

\begin{lemma}\label{lm:auxiliary_canonical_sequence_to_chain}
  Any auxiliary canonical representation
  $w = W_0 \supset W_1 \supset W_2 \supset \dots$ of an end $w^*\in RIB$
  contains a finite number of pre{-}main auxiliary canonical backward chains.
\end{lemma}

\begin{proof}
  Since by construction $V_c$ is restriction of a strictly attracting
  neighborhood of the attractor $A$ on $S$, there exists $J > 0$
  such that $f^J(W_0)\cap V_c\neq \emptyset$.

  Consider the nested sequence $f^J(w) = f^J(W_J) \supset f^J(W_{J+1}) \supset f^J(W_{J+2}) \supset \dots$.
  It is canonical sequence, as generation of atoms in
  $f^J(W_J)\setminus f^J(W_{J+1})$ is equal $J-J=0$.
  It is main sequence, as any backward chain that starts in main trunk
  $V_c$ is main.
  By Lemma~\ref{lm:main_canonical_sequence_to_chain},
  the main canonical representation $f^J(w)$ contains a unique main
  chain $C_{main}$.
  Let $P$ be a connectivity component of the set $f^{-J}(C_{main})$,
  distinct from $C_{main}$.
  By construction, $P$ is molecule. $f^J$ maps $P$ to $C_{main}$.
  On the level of individual atoms, the action of the map $f^J$ is described in
  Lemma~\ref{lm:atom_ab_type_map_classification}.
  Either $P$ does not have singular points of $f^J$, then $P$ is also
  a chain, or $P$ have singular points. Then $P$ is not a chain, but
  forks in a finite number of atoms having singular points.

  We can ignore branches in $P$ that fork towards repeller. An atom with singular points
  disconnects $P$ into multiple connected components, but only one
  connected component can continue the chain that belongs to the
  canonical representation $w$.
  Each branch in $P$ that fork towards attractor create a separate
  backward chain that belongs to the canonical representation $w$.
  As the number of singular points is finite,
  there are finite number of chains that maps to $C_{main}$.
\end{proof}

\begin{definition}\label{lm:RIB_main_end}
  Let $k^*$ be an end in $RIB$ with the main canonical representation.
  Then $k^*$ is called a \emph{main} end.
\end{definition}

\begin{definition}\label{lm:RIB_auxiliary_end}
  Let $k^*$ be an end in $RIB$ with the auxiliary canonical representation.
  Then $k^*$ is called a \emph{auxiliary} end.
\end{definition}

The inner mapping $f$ induces a map $f^*$ on the set $B(S)$.
\begin{lemma}\label{lm:main_end_is_fixed_point}
  Let $k^*$ be a main end in $RIB$.
  Then $k^*$ is a fixed point of the map $f^*\colon B(S)\to B(S)$.
\end{lemma}

\begin{proof}
  Consider the main canonical
  backward chain $C_0=\bigcup_{i\le 0} A_i$ of the canonical
  representation of the end $k^*$.
  By definition of the main chain,
  $f(C_0)=\bigcup_{i\le 0} f(A_i)= f(A_0)\cup \bigcup_{i\le 1} A_{i-1})= f(A_0)\cup C_0$,
  and the canonical representation of the end $f^*(k^*)$ also
  contains the main canonical backward chain $C_0$.
  By Lemma~\ref{lm:main_canonical_sequence_to_chain},
  a main canonical backward chain is
  unique for its canonical representation, and canonical
  representation is unique for an end.
  Therefore. $f^*(k^*)=k^*$.
\end{proof}

\begin{corollary}\label{cor:auxiliary_end_is_preimage_of_fixed_point}
  Let $k^*$ be an auxiliary end in $RIB$.
  Then $k^*$ is a preimage of a main end in $RIB$ for some power of the map $f^*$.
\end{corollary}

\begin{lemma}\label{lm:RIB_no_ends_are_isolated}
  If $S$ has singular points, then no end in $RIB$ is isolated.
\end{lemma}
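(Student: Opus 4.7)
The plan is to argue by contradiction. Suppose some end $k^{*}\in RIB$ is isolated, with canonical representation $(W_{n})$. By the topology on $B(S)$, there exists $n_{0}$ such that $W_{n_{0}}$ contains no end of $S$ other than $k^{*}$; equivalently, the molecule $W_{n_{0}}$ has exactly one topological end. Because $W_{n_{0}}$ is planar (Corollary~\ref{cor:any_molecule_planar}) and its boundary in $S$ is a compact subset of the regular level set $\tau^{-1}(c+n_{0})$ (hence a finite union of circles), and because a connected planar surface with finitely many boundary circles and a single end is homeomorphic to a sphere with finitely many open discs and one point removed, $W_{n_{0}}$ has finite Euler characteristic.

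Decompose $W_{n_{0}}$ into atoms. Since atoms are glued along boundary circles, each of Euler characteristic $0$, Euler characteristic is additive over the decomposition: $\chi(W_{n_{0}})=\sum_{A\subset W_{n_{0}}}\chi(A)$. By Lemma~\ref{lm:atom_planar} each atom has $\chi(A)\le 0$ with equality iff $A$ is of type $(1,1)$. Finiteness of the sum forces all but finitely many atoms of $W_{n_{0}}$ to have type $(1,1)$, so there is $N\ge n_{0}$ such that every atom of $W_{n_{0}}$ of generation $\ge N$ has type $(1,1)$. Since the $\partial^{+}$-boundary of a $(1,1)$-atom is a single circle, these atoms attach uniquely upward, yielding a canonical backward infinite chain $(A_{m})_{m\ge 0}\subset W_{n_{0}}$ with $A_{m}$ of generation $N+m$, each $A_{m}$ of type $(1,1)$.

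Next I would show this chain is main from some index on. Every atom of $\overline{V_{c}}$ is main, and Lemma~\ref{lm:main_and_auxiliary_atoms}(1) propagates the main property upward along $\partial^{+}$-adjacencies. Starting from a main atom in $\overline{V_{c}}$ and inductively choosing, at each branching $\partial^{+}$-boundary encountered, a circle leading into the branch of $S$ containing $W_{n_{0}}$, I obtain a main atom inside $W_{n_{0}}$ at some generation $\ge N$; the unique upward attachment within the $(1,1)$-tube then forces the entire tail of $(A_{m})$ to be main. Applying Corollary~\ref{cor:backward_chain_split_homeo} to this main backward infinite chain: since $S$ has singular points, there is $M$ such that $A_{m}$ is of type $(*,a_{m})$ with $a_{m}>1$ for all $m\ge M$, in direct contradiction with $A_{m}$ being of type $(1,1)$.

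The main obstacle is the branching-choice argument used to route the main propagation from $\overline{V_{c}}$ into the specific branch $W_{n_{0}}$: at each branching atom $B$ of type $(*,b)$ with $b>1$ encountered along the way, one must verify that at least one of the $b$ upward $\partial^{+}$-circles of $B$ leads into $W_{n_{0}}$. This requires careful bookkeeping of the adjacency structure across the finitely many branchings separating $\overline{V_{c}}$ from $W_{n_{0}}$, exploiting connectedness of $S$ and the tree-like structure of the atom adjacency graph established in Lemma~\ref{lm:finite_molecule_planar}.
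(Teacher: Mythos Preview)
Your Euler–characteristic reduction is correct and leads to the same endgame as the paper: a backward infinite chain to which Corollary~\ref{cor:backward_chain_split_homeo} should be applied. The paper, however, skips the Euler step entirely: it simply picks an arbitrary chain representing $k^{*}$, and distinguishes whether the chain is main or auxiliary. If main, Corollary~\ref{cor:backward_chain_split_homeo} gives atoms of type $(*,a_n)$ with $a_n>1$ cofinally, hence branching and non-isolation. If auxiliary, the paper applies $f^{m}$ for suitable $m$ so that the image chain is main, invokes the main case for $f^{m}(k^{*})$, and then pulls the conclusion back using that $f^{m}$ induces an open (inner) map on $RIB$.

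The genuine gap in your argument is exactly the step you flag as the obstacle, and your proposed fix does not work. The ``branching-choice'' routing argument via Lemma~\ref{lm:main_and_auxiliary_atoms}(1) presupposes that the unique tree path from $\overline{V_c}$ to the $(1,1)$-tube in $W_{n_0}$ is monotone in $\tau$ (i.e.\ uses only $\partial^{+}$-adjacencies). But an atom of type $(a,*)$ with $a>1$ along the way has several $\partial^{-}$-branches; the path to $W_{n_0}$ may descend into one of these before climbing again, and after a $\partial^{-}$-step into a branch other than $f(A)$ you are in an \emph{auxiliary} atom (Lemma~\ref{lm:main_and_auxiliary_atoms}(2)), where Lemma~\ref{lm:main_and_auxiliary_atoms}(1) no longer helps. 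In fact, ``reachable from $\overline{V_c}$ by $\partial^{+}$-steps'' is equivalent to ``main'' (an atom $A$ is main iff $(A,f(A),f^{2}(A),\dots)$ is a forward chain down into $\overline{V_c}$), so your routing argument is circular: it succeeds precisely when the chain you want to prove main is already main.

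The clean repair is the paper's: choose $k$ so large that $f^{k}(A_{0})\subset\overline{V_c}$ (possible since every orbit enters the strictly attracting neighborhood). Then $f^{k}(A_{0})$ is main, and by Lemma~\ref{lm:main_and_auxiliary_atoms}(1) so is every $f^{k}(A_{m})$; moreover each $f^{k}(A_{m})$ is still of type $(1,1)$ by Lemma~\ref{lm:atom_ab_type_map_classification} (the $A_m$ contain no singular points). Now Corollary~\ref{cor:backward_chain_split_homeo} applied to the main chain $(f^{k}(A_{m}))_{m\ge 0}$ forces $a_m>1$ eventually, contradicting type $(1,1)$. Alternatively, drop the Euler detour and argue exactly as the paper does using the induced action of $f^{m}$ on $RIB$.
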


\begin{proof}
  Let $k^*$ be an end in $RIB$ with the canonical representation $(W_i)$.
  Suppose $(W_i)$ is main canonical representation (Definition~\ref{def:main_auxiliary_canonical_representation}).
  By Lemma~\ref{lm:main_canonical_sequence_to_chain},
  the canonical representation $(W_i)$ contains a unique main canonical
  backward chain $C_0=\cup_{i\le 0} A_i$.
  By Corollary~\ref{cor:backward_chain_split_homeo},
  there exists $N$ such that for all $n \ge N$,
  the atoms $A_{n}$ are of type $(*,a_{n})$, where $a_{n}>1$.
  Then, for all $n \le N$,
  the set $W_n$ contains another backward chain
  distinct from the subchain $C_n=\cup_{i\ge n} A_{i}$.
  Since the sequence $(\operatorname{Int}(W_n))$ forms a base of topology of point $k^*$ in $B(S)$,
  this implies that $k^*$ is not isolated in $B(S)$
  and, consequently, not isolated in $RIB$ as the sets $\operatorname{Int}(W_n)$ are
  neighborhoods of $k^*$ in $RIB$.

  Suppose $(W_i)$ is auxiliary canonical representation
  (Definition \ref{def:main_auxiliary_canonical_representation})
  and $C_0=\cup_{i\le 0} A_i$ is a pre-main auxiliary canonical backward
  chain of $(W_i)$ (Definition \ref{def:premain_auxiliary_canonical_backward_chain},
  Lemma \ref{lm:auxiliary_canonical_sequence_to_chain}).
  By definition, there exists $m>0$ such that $f^m(C_0)$
  is a main backward chain. 
  The map $f^m$ induces an action on $RIB$.
  It has already been shown that the end $f^m(k^*)$
  corresponding to the main backward chain
  $f^m(C_0)$  is not isolated in $RIB$. As a preimage of $f^m(k^*)$,
  $k^*$ is also not isolated in $RIB$.
  This completes the proof.
\end{proof}

\begin{lemma}\label{lm:RIB_is_closed}
  $RIB$ is a closed subset in $B(S)$.
\end{lemma}

\begin{proof}
  Let $(k^*_j)$ be a convergent sequence
  of ends belonging to $RIB$, with canonical representations $(W^j_i)$.
  Connected components of $(\omega_n)$ provide the base of topology in $RIB$.
  Therefore, for the sequence $(k^*_j)$ to be convergent,
  their canonical representations $(W^j_i)_{i>0}$ should stabilize,
  i.e. for all $i> 0$ there exists $J> 0$ such that for all $j>J$ $W^j_i$
  coincide. Denote this stabilized set as $W^{stabilized}_i$.
  The sequence $(W^{stabilized}_i)_{i>0}$ by construction is also a canonical
  representation of some end $k^*\in RIB$.
  Then $k^*$ is the limit point of $(k^*_j)$.
\end{proof}

\begin{lemma}\label{lm:RIB_ends_are_cantor}
  If $S$ has singular points, then
  $RIB$ is a Cantor set in $B(S)$.
\end{lemma}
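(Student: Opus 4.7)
The plan is to verify that $RIB$ satisfies the four characterizing properties of the Cantor set: non-empty, compact metrizable, perfect, and totally disconnected. Three of these are essentially already in hand. $RIB$ is non-empty by the discussion following the definition of $AIB$ and $RIB$. $RIB$ has no isolated points by Lemma~\ref{lm:RIB_no_ends_are_isolated}. And $RIB$ is closed in $B(S)$ by Lemma~\ref{lm:RIB_is_closed}, so once $B(S)$ is known to be compact metrizable, $RIB$ inherits both properties.

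Compactness and metrizability of $B(S)$ follow from the Freudenthal/Richards classification of ends used in the previous results: the ideal boundary of any connected surface that is second countable is compact and metrizable, and $S$ is a subsurface of a compact surface $M$, hence second countable. If a more self-contained argument is preferred, I would use the canonical exhaustion of $S$ by the compact sets $\tau^{-1}([-n,n])$ from Lemma~\ref{lm:pseudo_Butcher_timeline_coordinate}: the complements have finitely many unbounded connected components (since $f$ is finite-to-one and atoms have a bounded finite boundary), and a standard diagonal argument on nested canonical representations yields a convergent subsequence from any sequence of ends.

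The only remaining property is total disconnectedness of $RIB$. The plan is to build a basis of clopen sets out of the canonical representations. Given an end $k^*\in RIB$ with canonical representation $(W_i)$, each $W_i$ is a connected component of $\tau^{-1}([c+i,\infty))$, and the set $W_i^{*}$ of ends represented by some sequence eventually contained in $W_i$ is open in $B(S)$ by the very definition of the topology on $B(S)$. I would argue it is also closed in $RIB$: its complement in $RIB$ is the union of the analogously defined $U^{*}$ over the other (finitely many, by Lemma~\ref{lm:atoms_yuild_ends_infinity} and the finite-to-one property of $f$) connected components $U$ of $\tau^{-1}([c+i,\infty))$, each of which is open. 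Hence $\{W_i^{*}\cap RIB : i\ge 0\}$ is a countable clopen neighborhood basis at $k^*$ in $RIB$, so $RIB$ is zero-dimensional, and since $B(S)$ is Hausdorff this yields total disconnectedness.

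The main obstacle will be the finiteness step in the last paragraph, namely showing that at each level $i$ there are only finitely many connected components of $\tau^{-1}([c+i,\infty))$ intersecting any given compact piece, so that the complement of $W_i^{*}$ in $RIB$ is a \emph{finite} union of open pieces, forcing $W_i^{*}\cap RIB$ to be closed. This should follow from the fact that $\tau^{-1}([c+i,c+i+1])$ is made of atoms of generation $-i-1$, each with finitely many boundary circles, together with Lemma~\ref{lm:one_adjacent_boundary} and the finite-to-one property of $f$, but it is the one place where a careful bookkeeping argument is required rather than a direct quotation of previous lemmas.
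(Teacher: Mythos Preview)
Your proof is correct and follows the same overall route as the paper: assemble the ingredients ``closed, no isolated points, zero-dimensional'' and conclude that $RIB$ is a Cantor set. The difference is one of economy. You re-derive zero-dimensionality from scratch by building a clopen basis out of the canonical representations $W_i^{*}$ and worrying about the finiteness of components at each level, whereas the paper simply invokes Lemma~\ref{lm:atoms_yuild_ends_infinity}, which already states that $RIB$ is zero-dimensional (the proof there is precisely your argument: the connected components of the $\omega_n$ furnish the required refinement). So the ``main obstacle'' you flag in your last paragraph is not an obstacle at all --- it has already been handled.

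On the other hand, you are more explicit than the paper about compactness and metrizability of $B(S)$. The paper's proof is terse enough that it leaves this point to the background theory of ends (Richards), while you spell out both the citation and a self-contained route via the exhaustion $\tau^{-1}([-n,n])$. That added care is not wasted, but it is not strictly needed either once one accepts the standard facts about ideal boundaries of second-countable surfaces.
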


\begin{proof}
  According to Lemmas~\ref{lm:RIB_no_ends_are_isolated}, \ref{lm:RIB_is_closed},
  $RIB$ is closed set and has no isolated points.
  Then $RIB$ is a perfect space.
  According to Lemma~\ref{lm:atoms_yuild_ends_infinity}, $RIB$ is zero-dimensional.
  Therefore, $RIB$ is a Cantor set.
\end{proof}

As a consequence of Lemmas~\ref{lm:0critpoint_component_topology},
\ref{lm:auxiliary_trunk_root_corellation},
\ref{lm:1crit_point_component_topology},
\ref{lm:AIB_ends_are_isolated},
\ref{lm:main_end_is_fixed_point},
\ref{cor:auxiliary_end_is_preimage_of_fixed_point},
\ref{lm:RIB_ends_are_cantor},
we obtain
\begin{theorem}\label{th:component_topology}
  $S$ is a planar set such that
  the ideal boundary of $S$ consists of two subsets $AIB$ and $RIB$.
  Both $AIB$ and $RIB$ are not empty.
  \begin{enumerate}[leftmargin=*]
  \item
    If $S$ has no singular points of $f$ then both
    $AIB$ and $RIB$ consist of single end;
    both those ends are fixed points for the induced map $f^*$ on $B(S)$.
  \item
    If $S$ has a single singular point of $f$ then
    $AIB$ consists of single end and $RIB$ is a Cantor set;
    all ends in $B(S)$ are fixed points for the induced map $f^*$ on $B(S)$.
  \item
    Otherwise $RIB$ is a Cantor set
    and $AIB$ consists 
    \begin{enumerate}[label={\rm\roman*)}]
    \item
      either of a single isolated end (punctured point),
      in which case
      all ends in $B(S)$ are fixed points for the induced map $f^*$ on $B(S)$,

    \item
      or of a countable set
      of isolated points. Then the main end in $AIB$ and the subset of main
      ends in $RIB$ are fixed points for the induced map $f^*$ on $B(S)$,
      and corresponding auxiliary ends are their preimages for some power of $f^*$.
\end{enumerate}
\end{enumerate}
\end{theorem}

\section{Note on general Pseudo-B\"{o}ttcher components}

\begin{definition}
  A connected component $K$ of a totally invariant set $R$ is called
  \begin{itemize}
  \item
  \emph{periodic}, if exists $n\ge 1$ such that $f^n(K)=K$;
  \item
  \emph{\preperiodic{}}, if exists $m\ge 1$ such that $f^m(K)$ is periodic;
  \item
  \emph{\forever wandering} otherwise.
\end{itemize}
\end{definition}

\begin{definition}\label{def:pseudobottcher_basin}
  A region of strict wandering $D$ is called
  \emph{Pseudo-B\"{o}ttcher} if for any connected component $S_i$ of $D$
  there exist $m\ge 0$ and $n\ge 1$ such that the set $f^m(S_i)$ is
  totally invariant Pseudo-B\"{o}ttcher component
  with respect to the map $f^n$.
  Connected components $S_i$ of $B$ are called \emph{Pseudo-B\"{o}ttcher}.
\end{definition}

By construction, $D$ is totally invariant set.
However, by Definition~\ref{def:pseudobottcher_basin},
Pseudo-B\"{o}ttcher region of strict wandering $D$ can't have
wandering components.
Then any connected component of $D$ is either periodic or \preperiodic{}.
$f$ eventually maps \preperiodic{} ones onto periodic ones.
Any periodic component $S$ of period $k$ is invariant for $f^k$.
Moreover, $S$ is totally invariant for the restriction $f^k|_S$.
The fact that the ends of $S$'s ideal boundary belong to the attractor
and the repeller also holds true for any power of $f$.
Therefore the statements~\ref{lm:0critpoint_component_topology}
and \ref{th:component_topology}
also hold true for any $S$ of $D$.

Consider a \preperiodic{} connected component $S$ of $D$ such that
$\exists l\ge 1$ $f^l(S)$ is periodic.
$f^l$ is a branched covering of $M$.
According to Theorem~\ref{th:component_topology}, $f^l(S)$ does not
have handles.
Using the Riemann-Hurvitz formula, we obtain that $S$ does not have handles too.
However, the Riemann-Hurvitz formula does not prohibit the case when
$S$ has more ends then $f^l(S)$.

For example, if the topological type of $f^l(S)$ is a sphere with 2
punctured points, then the topological type of $S$ can be a sphere with
3 punctured points, because that kind of branched covering is allowed.

\bibliographystyle{proc_igc_plain}
\bibliography{Vlasenko-article12-en}


\end{document}